\numberwithin{equation}{section}
\def\titlerunning#1{\gdef\titrun{#1}}
\def\author#1{\gdef\autrun{\def\and{\unskip, }#1}\gdef\@author{#1}}
\def\address#1{{\def\and{\\\hspace*{18pt}}\renewcommand{\thefootnote}{}%
\footnote {#1}}%
\markboth{\autrun}{\titrun}}
\def\email#1{e-mail: #1}
\def\keywords#1{\par\medskip
\noindent\textbf{Keywords.} #1}
\def\subjclass#1{\par\medskip
\noindent\textbf{Mathematics Subject Classification (2010).} #1}
\def\ou{\overline{u}}
\def\into{\int_{\Omega}}
\def\rn{\mathbb{R}^{N}}
 \def\w{{\bf w}} \def\z{{\bf z}} 
 \def\vare{\varepsilon}
\DeclareMathOperator{\dive}{div}
\newcommand\boeta{\eta}
\newcommand\bxi{\mbox{\boldmath{$\xi$}}}
\renewcommand{\epsilon}{{\varepsilon}}
\newcommand{\M}{\mathcal{M}}
\newcommand{\R}{{\mathbb R}}
\renewcommand{\d}{\,{\mathrm d}}
 \newcommand{\eps}{{\varepsilon}}
 \def\1{\raisebox{2pt}{\rm{$\chi$}}}
\def\dys{\displaystyle}
\newcommand{\matdot}{}
\newcommand{\ignore}[1]{}
\def\z{{\bf z}}
\def\ou{\overline{u}}
\def\vare{\varepsilon}
 \def\dys{\displaystyle}
 \def\w{{\bf w}} \def\z{{\bf z}} 
\def\rn{\mathbb{R}^{N}}
\def\re{\mathbb{R}}
\newcommand{\haus}{\displaystyle {\mathcal H}^{N-1}}
 \newcommand{\res}               {\!\!\mathop{\hbox{
                                \vrule height 7pt width .5pt depth 0pt
                                \vrule height .5pt width 6pt depth 0pt}}
                                \nolimits}
\newcommand{\tg}{\tilde g}
\newcommand{\ue}{u_\eps}
\theoremstyle{definition}
\newtheorem{theorem}{Theorem}[section]
\newtheorem{lemma}[theorem]{Lemma}
\newtheorem{proposition}[theorem]{Proposition}
\newtheorem{definition}[theorem]{Definition}
\newtheorem{remark}[theorem]{Remark}
\newtheorem{example}[theorem]{Example}
\begin{document}

\titlerunning{Nonlinear diffusion in transparent media:  the resolvent equation}

\title{Nonlinear diffusion in transparent media:\\   the resolvent equation}

\author{Lorenzo Giacomelli \and Salvador Moll \and Francesco Petitta}

\date{}

\maketitle

\address{L. Giacomelli: SBAI Department,
 Sapienza University of Rome, Via Scarpa 16, 00161 Roma, Italy; \email{lorenzo.giacomelli@sbai.uniroma1.it}
\and
S. Moll: Departament d'An\`{a}lisi Matem\`atica,
Universitat de Val\`encia,  Spain; \email{j.salvador.moll@uv.es}
\and
F. Petitta: SBAI Department,
 Sapienza University of Rome, Via Scarpa 16, 00161 Roma, Italy; \email{francesco.petitta@sbai.uniroma1.it}
}

\begin{abstract}
We consider the partial differential equation
$$
u-f=\dive\left(u^m\frac{\nabla u}{|\nabla u|}\right)
$$
with $f$ nonnegative and bounded and $m\in\re$. We prove existence and uniqueness of solutions for both the Dirichlet problem (with bounded and nonnegative {boundary datum}) and the homogeneous Neumann problem. Solutions, which a priori belong to a space of truncated bounded variation functions, are shown to have zero jump part with respect to the $\haus$ Hausdorff measure. Results and proofs extend to more general nonlinearities.

\keywords{Total Variation, Transparent Media, Linear Growth Lagrangian, Comparison Principle, Dirichlet Problems, Neumann Problems}

\subjclass{35J25, 35J60, 35B51, 35B99}

\end{abstract}

\section{Introduction}\label{sec1}

Let $\Omega$ be a bounded open set of $\rn$ with Lipschitz continuous boundary, $N\geq 1$, and  $m\in \re$. We are interested in the partial differential equation

\begin{equation}\label{pde-a}
u-f=\dive\left(u^m\frac{\nabla u}{|\nabla u|}\right) \quad \mbox{in }\ \Omega
\end{equation}
{with} $0\leq f\in L^{\infty}(\Omega)$. Equation \eqref{pde-a} corresponds to the resolvent equation of the following evolution equation:
\begin{equation}\label{ppde}
   \frac{\partial u}{\partial t}=\dive \left(u^m\frac{\nabla u}{|\nabla u|}\right).
 \end{equation}
When $m=0$, \eqref{ppde} coincides with the nowadays well-known {\it total variation flow}: we refer to the monograph \cite{ACMBook} for a detailed study of the subject
and to \cite{SapiroBook} for its applications in image processing. The case $m=1$ (the so-called {\it {heat equation in} transparent media}) was considered in \cite{ACMM_jee07}, where  existence and uniqueness of entropy solutions to the Cauchy problem for both \eqref{pde-a} and \eqref{ppde} were obtained. In addition, it was shown in \cite{ACMM_jee07} that solutions to the {\it relativistic heat equation}
\begin{equation}\label{ree}
\frac{\partial u}{\partial t}=\varrho\dive \left(u\frac{\nabla u}{\sqrt{u^2+\varrho^2|\nabla u|^2}}\right),
\end{equation}
converge to solutions of \eqref{ppde} (with $m=1$) as $\varrho\to +\infty$.  For $m>1$, equation \eqref{ppde} is the formal limit of the {\it relativistic porous medium equation},
\begin{equation}\label{pmrhe}
\frac{\partial u}{\partial t}=\varrho\dive \left(\frac{{u^m}\nabla u}{\sqrt{u^2+\varrho^2|\nabla u|^2}}\right), \quad m>1\,,
\end{equation}
as the kinematic viscosity $\varrho$ tends to $+\infty$ (here the maximal speed of propagation has been normalized to $1$). To the best of our knowledge, Eq. \eqref{pmrhe} was introduced in \cite{Rosenau_prA} while studying heat diffusion in neutral gases (precisely with $m=3/2$). Existence and uniqueness of solutions for the Cauchy problem associated to {\eqref{pmrhe}} were obtained in \cite{ACM_arma05}. Some key-features of solutions, such as propagation of support, waiting time phenomena, speed of discontinuity fronts, and pattern formations, have been recently addressed by many authors \cite{ACM_jde08,CC_na13,Caselles_jde11,Giacomelli_siam15,GMP, Calvo_siam12,CCCSS_jems16, CCCSS_in16}.

\smallskip

Three points of interest motivate the study of \eqref{ppde} and its resolvent equation also for $m\notin\{0,1\}$.

\smallskip

(I) {\it{Shock formation, $m>1$}}. Besides pioneering contributions \cite{BdP_arma92,Blanc_cpde93} and numerical simulations \cite{ACMSV_siam12,CCM_plms13}, the mechanism and the dynamics of shock formation for solutions to \eqref{pmrhe} is not yet fully understood (see in particular \cite{GMP} for further insights).
Since \eqref{ppde} and \eqref{pmrhe} formally coincide where $|\nabla u|\gg 1$, in particular at a discontinuity front, \eqref{ppde} may be seen as a prototype equation for investigating such phenomena. More generally, in flux-saturated diffusion equations such as \eqref{pmrhe}, one expects to see strong interplays between hyperbolic and parabolic mechanisms: the scaling invariance of \eqref{ppde} with respect to $x$ should make these interplays more transparent and easier to study qualitatively.

\smallskip

(II) {\it{Large solutions, $m<0$}}. The analysis of qualitative phenomena, namely the initial propagation of support, also motivates the analysis of \eqref{ppde} in the case $m<0$. Indeed, assume that we are in the case $N=1$ and that a solution to \eqref{pmrhe} has a fixed support $[a,b]$ during a time interval $(0,T)$ (in particular, $u(t,\cdot)$ is continuous and equals $0$ across its boundary, see \cite{Caselles_jde11}). Suppose that $u|_{t=0}$ (hence $u(t)$) has unit total mass. Let $\varphi(t,\eta)$ be defined through
$$
\int_{a}^{\varphi(t,\eta)} u(t,x) dx = \eta, \qquad   \eta\in (0,1).
$$
Formally, the equation satisfied by $v(t,\eta):=1/u(t,\varphi(t,\eta))$ is
{\begin{equation}
  \label{1d-lagr} \left\{\begin{array}{lc} \displaystyle\frac{\partial v}{\partial t}=\varrho \left(\frac{v^{{1-m}} v_\eta}{\sqrt{v^4+\varrho^2 v_\eta^2}}\right)_\eta & {\rm in \ } (0,T)\times (0,1)  \\ v=+\infty & {\rm on \ }[0,T]\times\{0,1\}\,,\end{array}\right.
\end{equation}}
\noindent i.e., $v$ is a ``large solution'' to (\ref{1d-lagr}.a). In \cite{CCM_plms13}, this lagrangian approach was used in the case $m=1$ to show some additional regularity properties for \eqref{ree} {(see also \cite{CCCSS_jems16} for the use of this approach respect to Eq. \eqref{flpme} below)}. Letting $\rho\to \infty$, one is led to analyze the problem of large solutions for Equation \eqref{pde-a} with $m<0$.

\smallskip

(III) {\it{Well-posedness}}. The last point of interest in \eqref{ppde} is of a more theoretical nature: \eqref{ppde} stands as a model for autonomous evolution equations in divergence form which, though of second order, have the same scaling of a first order nonlinear conservation law. As mentioned in (I), this structure may lead to simpler qualitative studies. However, at the level of well-posedness, it poses quite a few additional difficulties with respect to {\eqref{pmrhe}} and other flux-saturated diffusion equations, such as the speed-limited porous medium equation,
\begin{equation} \label{flpme}
  u_t=\dive\left(\frac{{u}\nabla  u^{M-1}}{\sqrt{1+|\nabla u^{M-1}|^2}}\right), \quad M>{1}.
  \end{equation}
Indeed, while an existence and uniqueness theory is available for both \eqref{pmrhe} and \eqref{flpme}, it is not yet for \eqref{ppde}. As first step toward the elaboration of such theory, the aim of this paper is to give an appropriate notion of solutions to \eqref{pde-a} and to discuss their existence and uniqueness.

\smallskip

We mainly concentrate on the Dirichlet problem,
\begin{equation} \label{pde}
\left\{
\begin{array}{ll}\displaystyle
u-f=\dive\left(u^m\frac{\nabla u}{|\nabla u|}\right) & \mbox{in }\ \Omega \\
u=g & \mbox{on}\  \partial\Omega\,,
\end{array}
\right.
\end{equation}
where $g\in L^\infty(\partial\Omega)$ is nonnegative. In fact, consistently with (II), for $m<0$ we assume that $f$ and $g$ (hence, as we shall see, solutions) are bounded away from zero. On the other hand, for $m>0$ a positive boundary datum $g$ does not guarantee positivity of the solution (see e.g. Example \ref{L-explicit}$(v)$ for $f=0$) and, moreover, the case $g=0$ is interesting in view of  the  relation between \eqref{ppde} and \eqref{pmrhe} (see (I) and (II) above). Therefore, for $m>0$ we only assume nonnegativity of the data.

\smallskip

For all $m\in\R$, we introduce a notion of solutions for problem \eqref{pde} (see Definitions \ref{defi1} and \ref{defi1d}) and we prove existence of solutions (see Theorems \ref{exi} and \ref{exid}) as well as a contraction principle in $L^1(\Omega)$ (see Theorems \ref{comp} and \ref{UniqEllipticd}). We also show that solutions of \eqref{pde} have diffuse gradients, i.e., their jump set has zero $(N-1)$-dimensional Hausdorff measure (see Lemma \ref{cont} and \ref{4.8}), an insight which applies as well to the resolvent equations of \eqref{pmrhe} and \eqref{flpme} (cf. Remark \ref{rem-cont}).

\smallskip

According to our notion of solution, the Dirichlet boundary condition $u=g$ transforms into obstacle-type constraints which formally read as follows:
\begin{eqnarray}\label{bc-}
& u\le g,  \quad\mbox{with}\quad \frac{Du}{|Du|}\cdot \nu =
1 \ \mbox{ if } \ u< g & \quad\mbox{when $m<0$},
\\ \label{bc+}
& u\ge g,  \quad\mbox{with}\quad \frac{Du}{|Du|}\cdot \nu =
-1 \ \mbox{ if } \ u>g & \quad\mbox{when $m>0$,}
\end{eqnarray}
where $\nu$ denotes the outward unit normal to $\partial\Omega$ (see e.g. \cite{ACMBook} for the case $m=0$, in which $u=g$ turns into $\frac{Du}{|Du|}\cdot \nu\in$ sign$(g-u)$). Now, it is not surprising that in the $BV$-framework the boundary datum may not be attained.
If this is the case, \eqref{bc-}$_2$ and \eqref{bc+}$_2$ are natural compatibility conditions: seen together, they formally say that, while approaching $\partial\Omega$, either $u$ strictly decreases toward $g$ if $u>g$, or viceversa. The selection criterium given by the sign of $m$ can then be understood by a simple heuristic in one space dimension: assuming that $u$ is strictly monotone near $\partial\Omega$, \eqref{pde} reduces to
\begin{equation}\label{heur1}
mu^{m-1}|u'|=u-f \quad\mbox{for d$(x,\partial\Omega)\ll 1$}.
\end{equation}
If for instance $m>0$, then \eqref{heur1} implies that $u=g$ can be attained only if $g-f\ge 0$, and otherwise $u\ge f>g$. The case $m<0$ is symmetric. Examples are given in Lemma \ref{L-explicit}(i).

\smallskip

Motivated by (II), we also provide preliminary information on existence or nonexistence of large solutions, i.e., solutions to
\begin{equation*}
\left\{
\begin{array}{ll}\displaystyle
u-f=\dive\left(u^m\frac{\nabla u}{|\nabla u|}\right) & \mbox{in }\ \Omega \\
u=+\infty & \mbox{on}\  \partial\Omega\,,
\end{array}
\right.
\end{equation*}
where $f\in L^{\infty}(\Omega)$. We show in particular that, when $m<0$ and $\Omega$ is a ball, solutions are bounded independently of the boundary datum, a phenomenon which occurs also for $m=0$ (see \cite{MP_jam15}, and \cite{MP_jfa12} for the corresponding parabolic problem).
On the other hand, for $m>1$  solutions with $g=n\in \mathbb N$ cannot converge to any $L^1_{loc}$ function in $\Omega$, i.e. large solutions should not exist.

\smallskip

A similar (though simpler) approach leads to analogous results for the homogeneous Neumann problem (see Section \ref{neumann}):
\begin{equation}\label{neumannpde}\left\{\begin{array}{ll}\displaystyle
u-f=\dive\left(u^m\frac{\nabla u}{|\nabla u|}\right) &  \mbox{in }\ \Omega \\ {u^m\frac{\nabla u}{|\nabla u|}\cdot \nu}=0 & \mbox{on \ }\partial \Omega. \end{array}\right.
\end{equation}
Also, our analysis of both \eqref{pde} and \eqref{neumannpde} extends to more general forms of the nonlinearities (see Section \ref{neumann}).

\smallskip

The plan of the paper is the following: Section \ref{preli} contains definitions, notations, and known results (on divergence-measure fields and TBV-functions) used in the paper. Section \ref{approx} is devoted to the  construction of suitable approximating solutions. Section \ref{sing} discusses well-posedness and regularity of solutions to \eqref{pde} in the singular case, $m<0$.
In Section \ref{4}, analogous results are proved for problem \eqref{pde} in the degenerate case, $m>0$, with some technical complications since a priori bounds do not control $|Du|$ down to $u=0$. Due to that, a few new results on $TBV$-spaces are given in Section \ref{5.1}. Section \ref{S-qp} discusses qualitative features of solutions to \eqref{pde}, including global a priori $L^\infty(\Omega)$ bounds of solutions ($m<0$), a barrier for the case $0<m<1$, and nonexistence of uniform bounds in case $m>1$. Section \ref{neumann} deals with the case of homogeneous Neumann boundary conditions and to more general nonlinearities.

\section{Preliminaries}\label{preli}

\subsection{Notation}
We denote by $\mathcal H^{N-1}$ the $(N-1)$-dimensional Hausdorff measure, by $\mathcal L^N$ the $N$-dimensional Lebesgue measure, and by ${\mathcal M}(\Omega)$ the space of  finite Radon measures on $\Omega$ (see \cite[Def. 1.40]{AFPBook}). The subscript $_0$ denotes spaces of compactly supported functions. We recall that ${\mathcal M}(\Omega)$ is the dual space of $C_0(\Omega)$. We let $\mathcal D(\Omega):=C_0^\infty(\Omega)$, $\mathcal D'(\Omega)$ its dual {, and
\begin{eqnarray*}
BV^+(\Omega) &=& BV(\Omega)\cap L^1(\Omega;[0,+\infty)),
\\
DBV(\Omega) &=& \{u\in BV(\Omega): \ \mathcal H^{N-1}(S_u)=0\}.
\end{eqnarray*}
}
We use standard notation and properties of $BV$ functions, for which we refer to \cite{AFPBook}. For $a < b$, we define the truncating functions
\begin{equation*}
T_{a}^{b}(s) := \max(\min(b, s ), a),  \quad  T_{a} (s) :=  T_{-a}^{a}(s), \quad T_{a}^{\infty} (s):= \max( s , a),\quad s\in \R,
\end{equation*}
and the spaces
$$
\mathcal{T} := \{T_{a}^{b} : 0<a<b\}\,, \quad  \mathcal{T}^\infty  := \{T_{a}^{\infty} : 0<a\}\,.
$$
For $F\in W^{1,1}_{loc}((0,+\infty))$, let
\begin{equation}\label{def-phi-F}
\phi_F(s):= \int_1^s F'(\sigma) \sigma^m\d \sigma, \quad s>0.
\end{equation}
In particular,
\begin{equation}\label{def-phi-I}
\phi(s):=\phi_{\mathrm{Id}}(s)=\left\{\begin{array}{ll}
\frac{1}{m+1} s^{m+1} & \mbox{ if $m\ne -1$} \\ \log s  & \mbox{ if $m= -1$,}\end{array}\right. \quad \mbox{so that} \quad \phi'(s)=s^{m}\,.
\end{equation}

{
\subsection{TBV-functions}
}

Let
\begin{eqnarray*}
TBV^+(\Omega) &=& \{u\in L^1(\Omega;[0,+\infty)): \ F(u)\in BV(\Omega) \ \mbox{ $\forall \ a>0$, $F\in W^{1,\infty}_{a}$}\},
\end{eqnarray*}
where
\begin{equation}\label{def-WW}
W^{1,\infty}_{a}=W^{1,\infty}([0,+\infty);[a,+\infty)), \quad a>0\,.
\end{equation}
{We now outline some properties of $TBV^+(\Omega)$ which are analogous to those of $GBV(\Omega)$, the space of integrable functions such that $T_{a}(u)\in BV(\Omega)$ for any $a\geq 0$ (see \cite{AFPBook}). Further properties of the space $TBV^{+}$ will be proved later in Section \ref{5.1}. First of all, $TBV^+$ may be equivalently defined as}
\begin{equation*}
TBV^+(\Omega)=\{u\in L^1(\Omega;[0,+\infty)): \ T(u)\in BV(\Omega) \ \mbox{for all $T\in \mathcal T^\infty$}\}
\end{equation*}
(see \cite[Remark 4.27]{AFPBook}). Given $u\in L^1(\Omega)$, the upper and lower approximate limits of $u$ at a point $x\in\Omega$ are defined respectively as
\begin{eqnarray*}
  u^\vee(x)&:=& \inf\{t\in \R : \lim_{\rho\downarrow 0}\rho^{-N} |\{u>t\}\cap B_\rho(x)|=0\},\\
  u^\wedge(x)&:=& \sup\{t\in \R : \lim_{\rho\downarrow 0}\rho^{-N} |\{u<t\}\cap B_\rho(x)|=0\}.
\end{eqnarray*}
We let $S_u^*:=\{x\in\Omega : u^\wedge(x)<u^\vee(x)\}$ and
$$
DTBV^+(\Omega)=\{u\in TBV^+(\Omega): \ \mathcal H^{N-1}(S_u^*)=0\}.
$$
The set of weak approximate jump points is the subset $J_u^*$ of $S_u^*$ such that there exists a unit vector $\nu_u^*(x)\in\R^N$ such that the weak approximate limit of the restriction of $u$ to the hyperplane $H^+:=\{y\in \Omega: \langle y-x,\nu_u^*(x)\rangle>0\}$ is $u^\vee(x)$ and the weak approximate limit of the restriction of $u$ to  $H^-:=\{y\in \Omega: \langle y-x,\nu_u^*(x)\rangle<0\}$ is $u^\wedge(x)$. In \cite[Page 237]{AFPBook} it is shown that for any $u\in L^1_{loc}(\Omega)$,  $J_u\subset J_u^*$. Moreover, $u^\vee(x)=\max\{u^+(x),u^-(x)\}$, $u^{\wedge}(x)=\min\{u^+(x),u^-(x)\}$ and $\nu_u^*(x)=\pm \nu_u(x)$ for any $x\in J_u$.  Furthermore, arguing as in \cite[Theorem 4.34]{AFPBook} {one obtains} the following result.

 \begin{lemma}\label{tbvjump}For any $u\in TBV^+(\Omega)\cap L^\infty(\Omega)$,  \begin{itemize}\item[(i)] $S_u^*=\cup_{a>0}S_{T_{a}^\infty(u)}$   and
 {
 \begin{equation*}
 u^\vee(x)=\lim_{a\to 0^+}(T_{a}^\infty(u))^\vee(x)\,,\quad u^\wedge(x)=\lim_{a\to 0^+}(T_{a}^\infty(u))^\wedge(x);
 \end{equation*}
  }
 \item[(ii)] $S_u^*$ is countably $\mathcal H^{N-1}$ rectifiable and $\mathcal H^{N-1}(S_u^*\setminus J_u^*)=0$.
\end{itemize}
\end{lemma}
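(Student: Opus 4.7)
The plan is to mirror the proof of \cite[Theorem 4.34]{AFPBook} for $GBV$, replacing the symmetric truncations $T_a$ by the one-sided truncations $T_a^\infty$ and exploiting the equivalent definition of $TBV^+(\Omega)$ recalled in the excerpt, which gives $T_a^\infty(u)\in BV(\Omega)$ for every $a>0$ (here we use also $u\in L^\infty(\Omega)$). The cornerstone is an elementary pointwise identity:
\begin{equation*}
(T_a^\infty(u))^\vee(x)=\max(u^\vee(x),a),\qquad (T_a^\infty(u))^\wedge(x)=\max(u^\wedge(x),a)\qquad \forall\,a>0,\ x\in\Omega.
\end{equation*}
This follows from the definitions together with the set identities $\{T_a^\infty(u)>t\}=\{u>t\}$ for $t\ge a$ (while the set is $\Omega$ for $t<a$) and $\{T_a^\infty(u)<t\}=\{u<t\}$ for $t>a$ (while empty for $t\le a$).

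Assertion (i) is an immediate consequence. Since $u\ge 0$, any $x\in S_u^*$ satisfies $u^\vee(x)>u^\wedge(x)\ge 0$, so for $a\in(0,u^\vee(x))$ the displayed identity yields $(T_a^\infty u)^\vee(x)=u^\vee(x)>\max(u^\wedge(x),a)=(T_a^\infty u)^\wedge(x)$ and hence $x\in S_{T_a^\infty(u)}$. The reverse inclusion is analogous, and the two limit formulas are just the pointwise statement $\lim_{a\to 0^+}\max(s,a)=s$ for $s\ge 0$.

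For (ii), rectifiability is direct: $S_u^*=\bigcup_{n\in\N}S_{T_{1/n}^\infty(u)}$ is a countable union of jump sets of $BV$ functions, each countably $\mathcal H^{N-1}$-rectifiable. To prove $\mathcal H^{N-1}(S_u^*\setminus J_u^*)=0$, one uses $\mathcal H^{N-1}(S_{T_a^\infty(u)}\setminus J_{T_a^\infty(u)})=0$ for every $a>0$. At each $x\in J_{T_a^\infty(u)}$ with normal $\nu$ and $a<u^\vee(x)$, the traces of $T_a^\infty(u)$ are $u^\vee(x)$ on $H^+$ and $\max(u^\wedge(x),a)$ on $H^-$; using that $\{u>t\}=\{T_a^\infty(u)>t\}$ for every $t>a$, one checks that the weak approximate limit of $u$ restricted to $H^+$ is $u^\vee(x)$, and that of $u$ restricted to $H^-$ is $u^\wedge(x)$ provided $u^\wedge(x)\ge a$. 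In the easy case $u^\wedge(x)>0$ one just picks $a<u^\wedge(x)$ and the argument is complete.

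The main obstacle is the residual case $u^\wedge(x)=0<u^\vee(x)$, where the jump normal $\nu_a$ of $T_a^\infty(u)$ must be shown to stabilize (up to $\mathcal H^{N-1}$-negligible exceptional sets) as $a\to 0^+$. As in \cite[Theorem 4.34]{AFPBook}, this is handled by observing that for every $t>0$ the superlevel set $\{u>t\}=\{T_{t/2}^\infty(u)>t\}$ is a set of finite perimeter, so its reduced boundary $\partial^*\{u>t\}$ carries a canonical measure-theoretic outer normal at $\mathcal H^{N-1}$-a.e. point, and by a coarea-type selection (using that $\nu_a$ coincides $\mathcal H^{N-1}$-a.e. on $\partial^*\{u>t\}$ for each $t\in(a,u^\vee(x))$ with that canonical normal) one obtains a single normal $\nu$ that serves as weak approximate jump normal of $u$ at $x$, so $x\in J_u^*$. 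This last coarea/selection step is the only nontrivial ingredient; everything else is bookkeeping around the truncation identities.
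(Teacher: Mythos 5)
Your proposal takes the same route as the paper, whose ``proof'' is precisely the instruction to adapt \cite[Theorem 4.34]{AFPBook} to the one-sided truncations $T_a^\infty$: the pointwise identities $(T_a^\infty(u))^\vee=\max(u^\vee,a)$, $(T_a^\infty(u))^\wedge=\max(u^\wedge,a)$, part (i), the rectifiability of $S_u^*$, and the case $u^\wedge(x)>0$ are all correct as you state them. The one place where you stay at the level of a sketch is the residual case $u^\wedge(x)=0<u^\vee(x)$, where you propose to stabilize the normals $\nu_a$ by a coarea-type selection over the reduced boundaries $\partial^*\{u>t\}$; this can be made rigorous, but it needs a Fubini-type bookkeeping of the $\mathcal H^{N-1}$-negligible exceptional sets in both $x$ and $t$ that you do not carry out. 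The simpler and standard way to close this step --- the one implicit in the cited proof --- is the monotonicity of the truncations themselves: for $0<a<a'$ one has $T_{a'}^\infty(u)=\max(T_a^\infty(u),a')$, so at any point of $J_{T_a^\infty(u)}$ where the composed traces $\max((T_a^\infty(u))^\pm,a')$ still differ, the jump triplet of $T_{a'}^\infty(u)$ is obtained from that of $T_a^\infty(u)$ by applying $s\mapsto\max(s,a')$ to the traces, whence $\nu_{T_{a'}^\infty(u)}(x)=\nu_{T_a^\infty(u)}(x)$ exactly, with no further almost-everywhere selection. Taking a sequence $a_n\downarrow 0$ and discarding the countably many $\mathcal H^{N-1}$-null sets $S_{T_{a_n}^\infty(u)}\setminus J_{T_{a_n}^\infty(u)}$, all normals at $x$ coincide with a single $\nu$; since $0\le u\le T_{a_n}^\infty(u)$ and $|T_{a_n}^\infty(u)-u|\le a_n$, one gets $\fint_{H^-\cap B_\rho(x)}|u|\le 2a_n+o_\rho(1)$ and $\fint_{H^+\cap B_\rho(x)}|u-u^\vee(x)|\le a_n+o_\rho(1)$, so the one-sided limits of $u$ are $u^\wedge(x)=0$ and $u^\vee(x)$, i.e. $x\in J_u^*$. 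With this replacement (or with the missing bookkeeping added to your coarea argument) your proof is complete and coincides in substance with the paper's.
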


\subsection{Divergence-measure vector-fields}\label{GreenAnz}

Let
$$
 X(\Omega) = \left\{ \z \in L^{\infty}(\Omega; \R^N) \ :
 \, \dive \z \in L^\infty(\Omega) \right\}\,,
$$
$$
 X_{\mathcal{M}}(\Omega) = \left\{ \z \in L^{\infty}(\Omega; \R^N) \ :
 \, \dive \z \in {\mathcal M}(\Omega) \right\}.
$$
In \cite[Theorem 1.2]{Anz_ampa83} (see also \cite{ACMBook,ChF_arma99}), the weak trace on $\partial \Omega$ of the normal component of $\z \in X_{\mathcal{M}}(\Omega)$ is defined as a linear operator $[\cdot,\nu]: X_{\mathcal{M}}(\Omega) \rightarrow L^\infty(\partial \Omega)$ such that
$
\Vert \, [\z,\nu] \, \Vert_{L^\infty(\partial \Omega)} \leq \Vert \z \Vert_{\infty}
$
for all $\z \in X_{\mathcal{M}}(\Omega)$ and $[\z,\nu]$ coincides with the point-wise trace of the normal component if $\z$ is smooth:
$$
[\z,\nu](x) = \z(x) \cdot \nu(x) \quad \hbox{for all} \ x \in \partial \Omega  \ \ \hbox{if} \ \z \in C^1(\overline{\Omega}, \R^m).
$$
It follows from \cite[Proposition 3.1]{ChF_arma99} or \cite[Proposition 3.4]{ACM_afst05} that
{$\dive \z$ is absolutely continuous with respect to $\mathcal H^{N-1}$.
}
Therefore, given  $\z \in X_{\mathcal{M}}(\Omega)$ and $u \in BV(\Omega)\cap L^\infty(\Omega)$, the functional $(\z,Du)\in \mathcal D'(\Omega)$ given by
\begin{equation}\label{defmeasx144}
\langle (\z,Du),\varphi\rangle := - \int_{\Omega} u^* \, \varphi \d (\dive \z) - \int_{\Omega}
u \, \z \matdot \nabla \varphi \d x\,
\end{equation}
is well defined, and the following  holds (see  \cite{Caselles_jde11},  Lemma 5.1, Theorem 5.3,  Lemma 5.4,  and Lemma 5.6).

\begin{lemma}\label{lemmacaselles}
Let $\z \in X_{\mathcal{M}}(\Omega)$ and  $u \in BV(\Omega)\cap L^\infty(\Omega)$. Then
the functional $(\z,Du)\in \mathcal D'(\Omega)$ defined by \eqref{defmeasx144} is a Radon measure which is absolutely continuous with respect to $\vert Du \vert$. Furthermore
\begin{equation}\label{Green}
\int_{\Omega} u^*\d(\dive \z) + (\z, Du)(\Omega) =
\int_{\partial \Omega} [\z, \nu] u \d\mathcal{H}^{m-1},
\end{equation}
\begin{equation}\label{anzellotti-caselles}
  \dive(u\z )=u^*\dive\z+(\z,Du)\,\quad {\rm as \ measures,}
\end{equation}
and
\begin{equation}\label{cas-trace}
[u\z ,\nu] = u[\z,\nu]   \quad\mbox{$\mathcal H^{N-1}$-a.e. on $\partial\Omega$}.
\end{equation}
\end{lemma}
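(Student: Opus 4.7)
The strategy is to extend Anzellotti's classical Gauss--Green theory \cite{Anz_ampa83} from the $L^\infty$-divergence setting to the measure-valued case, following \cite{Caselles_jde11}, by strict approximation of $u\in BV(\Omega)\cap L^\infty(\Omega)$ with smooth functions.

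First I would verify the statement for $u\in C^\infty(\Omega)\cap BV(\Omega)\cap L^\infty(\Omega)$: here $u\varphi\in C^1_c(\Omega)$ for any $\varphi\in\D(\Omega)$, so distributional integration by parts for $\z\in X_{\mathcal M}(\Omega)$ yields
\begin{equation*}
-\int_\Omega u\varphi\,\d(\dive\z)=\int_\Omega\z\cdot\nabla(u\varphi)\,\d x=\int_\Omega\varphi\,\z\cdot\nabla u\,\d x+\int_\Omega u\,\z\cdot\nabla\varphi\,\d x.
\end{equation*}
Substituting into \eqref{defmeasx144} gives $(\z,Du)=\z\cdot\nabla u\,\leb$, a Radon measure with $|(\z,Du)|(A)\le\|\z\|_\infty|Du|(A)$ for every Borel $A\subset\Omega$, and the Green identity and the product rule reduce to classical calculus.

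For general $u\in BV(\Omega)\cap L^\infty(\Omega)$ I would pick a strictly convergent approximation $u_k\in C^\infty(\Omega)\cap BV(\Omega)$ with $\|u_k\|_\infty\le\|u\|_\infty$, $u_k\to u$ in $L^1(\Omega)$, and $|Du_k|(\Omega)\to|Du|(\Omega)$. Testing \eqref{defmeasx144} with $\varphi\in\D(\Omega)$, the second term converges by dominated convergence; for the first term, the fact that $\dive\z\ll\haus$ together with the convergence of the precise representatives $u_k^*\to u^*$ on $\Omega\setminus S_u$ allows dominated convergence, provided the approximants are chosen so as to control $u_k^*$ on the $\haus$-rectifiable jump set. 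This identifies the limit of $\langle(\z,Du_k),\varphi\rangle$ with $\langle(\z,Du),\varphi\rangle$. Since the $(\z,Du_k)$ are measures with total variations uniformly bounded by $\|\z\|_\infty|Du_k|(\Omega)$, the limit $(\z,Du)$ is a Radon measure satisfying $|(\z,Du)|\le\|\z\|_\infty|Du|$, yielding absolute continuity with respect to $|Du|$. Formula \eqref{anzellotti-caselles} then follows directly from \eqref{defmeasx144}, since for every $\varphi\in\D(\Omega)$
\begin{equation*}
\langle\dive(u\z),\varphi\rangle=-\int_\Omega u\,\z\cdot\nabla\varphi\,\d x=\langle(\z,Du),\varphi\rangle+\int_\Omega u^*\varphi\,\d(\dive\z),
\end{equation*}
and both terms on the right are measures. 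The Green identity \eqref{Green} is obtained by applying the classical Gauss--Green formula to the approximants $u_k$ and passing to the limit, using the continuity of the $BV$-trace under strict convergence to handle the boundary integral. Finally, \eqref{cas-trace} is extracted by comparing \eqref{Green} applied to $\z$ with \eqref{Green} applied to the field $u\z\in X_{\mathcal M}(\Omega)$ (using \eqref{anzellotti-caselles} to identify $\dive(u\z)$), localizing through test functions supported near $\partial\Omega$, so that the interior contributions cancel and one is left with the pointwise identity on $\partial\Omega$.

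The chief obstacle is the passage to the limit in $\int_\Omega u_k^*\varphi\,\d(\dive\z)$: although $\dive\z\ll\haus$ prevents concentration on sets of codimension greater than one, the jump set $S_u$ of $u$ is itself $\haus$-rectifiable and can carry $|\dive\z|$-mass, while standard mollifications of $u$ typically do not converge to $u^*$ on $S_u$. This is the delicate technical point addressed in the cited lemmas from \cite{Caselles_jde11}, where it is resolved by a careful construction of the approximation together with a suitable convention on $u^*$ at the jump set of $u$.
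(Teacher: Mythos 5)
The paper does not prove this lemma at all: it imports it from the literature (see the sentence preceding the statement, citing \cite{Caselles_jde11}, Lemma 5.1, Theorem 5.3, Lemma 5.4, Lemma 5.6, which in turn build on \cite{Anz_ampa83,ChF_arma99}), so there is no internal proof to compare with; your proposal is essentially a reconstruction of the standard argument behind those citations, and its architecture is sound. Three points that you leave implicit should be made explicit to close it. First, the ``delicate'' convergence $u_k^*\to u^*$ is not really an open issue: if the strict approximants are built from radially symmetric mollifiers, they converge $\haus$-a.e.\ to the precise representative, including to the average $\frac{1}{2}(u^++u^-)$ on $J_u$ (see \cite{AFPBook}), which is exactly the convention under which \eqref{defmeasx144} is stated; since $\dive\z\ll\haus$, dominated convergence then applies. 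Second, for \eqref{Green} you need $(\z,Du_k)(\Omega)\to(\z,Du)(\Omega)$ and the localized bound $|(\z,Du)|\le\|\z\|_\infty|Du|$, not just convergence in $\mathcal D'(\Omega)$: both follow because strict convergence gives $|Du_k|\rightharpoonup|Du|$ weakly-$*$ together with convergence of total masses, hence no escape of mass to $\partial\Omega$; also note that the base case of the Green formula for continuous $BV$ functions against $X_{\mathcal M}$-fields is itself part of \cite{Anz_ampa83}, not classical calculus. Third, in deriving \eqref{cas-trace} by comparing the Green formulas for $\z$ and $u\z$ you must not invoke the general Leibniz rule of Lemma \ref{l-ms}, which the paper proves \emph{from} this lemma; what you need is only the identity $(\z,D(uw))=w(\z,Du)+u\,(\z\cdot\nabla w)\,\mathcal L^N$ for $w$ smooth, which follows directly from \eqref{defmeasx144}, and with it your comparison/localization step closes without circularity.
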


We denote by $\theta(\z,Du)$  the Radon-Nikodym derivative of $(\z,Du)$ with respect to $|Du|$. The following result can be found in \cite[Proposition 2.7]{LS}.

\begin{lemma}
  \label{anz} Let $u\in DBV(\Omega)\cap L^\infty(\Omega)$, $\z\in X_\M(\Omega)$ and
       let $\Gamma$ be a Lipschitz continuous nondecreasing function. Then
   \begin{equation}\label{composition}
    \theta(\z,D(\Gamma(u)))=\theta(\z,D u) \quad |D(\Gamma\circ u)|{\rm -a.e. \ in \ } \Omega.
    \end{equation}
    Consequently,
    \begin{equation}\label{composition2}
    (\z,D(\Gamma(u)))=\Gamma'(u)(\z,Du) \quad\mbox{as measures.}
    \end{equation}
  \end{lemma}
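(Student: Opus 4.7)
My plan is to prove the measure identity \eqref{composition2} directly by smooth approximation of $\Gamma$, and then to extract \eqref{composition} from it by taking Radon--Nikodym derivatives.

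The first move is to record the chain rule in $BV$ for $u\in DBV(\Omega)\cap L^\infty(\Omega)$: since the jump part of $D(\Gamma\circ u)$ is supported on $S_u$, which is $\mathcal H^{N-1}$-negligible, the Vol'pert--Ambrosio--Dal Maso formula reduces to
\begin{equation*}
D(\Gamma\circ u)=\Gamma'(\tilde u)\,\nabla u\,\mathcal L^N+\Gamma'(\tilde u)\,D^c u,
\end{equation*}
with $\tilde u$ the common value $u^\vee=u^\wedge$, defined $|Du|$-a.e. Since $\Gamma'\geq 0$, this gives $|D\Gamma(u)|=\Gamma'(\tilde u)|Du|$ as measures, and in particular $|D\Gamma(u)|\ll|Du|$. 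Granting \eqref{composition2}, one then computes
\begin{equation*}
\theta(\z,D\Gamma(u))|D\Gamma(u)|=(\z,D\Gamma(u))=\Gamma'(u)(\z,Du)=\Gamma'(\tilde u)\,\theta(\z,Du)|Du|=\theta(\z,Du)|D\Gamma(u)|,
\end{equation*}
which yields \eqref{composition} after dividing by $|D\Gamma(u)|$.

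To prove \eqref{composition2}, I would first take $\Gamma\in C^1$ and test the definition \eqref{defmeasx144} against $\varphi\in\D(\Omega)$. Since $\Gamma$ is continuous and $u\in DBV$, the precise representative satisfies $(\Gamma\circ u)^*=\Gamma(u^*)$ $|\dive\z|$-a.e.\ (recall from Section \ref{GreenAnz} that $\dive\z\ll\mathcal H^{N-1}$); combining this with the Anzellotti product rule \eqref{anzellotti-caselles} and classical integration by parts on the absolutely continuous part of $Du$ yields $(\z,D\Gamma(u))=\Gamma'(u)(\z,Du)$ as measures. For a general Lipschitz nondecreasing $\Gamma$, mollify to $\Gamma_\eps=\rho_\eps\ast\Gamma\in C^\infty$ (uniformly Lipschitz and nondecreasing, with $\Gamma_\eps'\to\Gamma'$ $\mathcal L^1$-a.e.): the $L^1$ convergence $\Gamma_\eps(u)\to\Gamma(u)$ and the $|\dive\z|$-a.e.\ convergence $\Gamma_\eps(u^*)\to\Gamma(u^*)$ give $(\z,D\Gamma_\eps(u))\rightharpoonup^*(\z,D\Gamma(u))$ in $\M(\Omega)$ through \eqref{defmeasx144}, while dominated convergence yields $\Gamma_\eps'(u)(\z,Du)\to\Gamma'(u)(\z,Du)$ in total variation. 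Passing to the limit in the smooth identity gives \eqref{composition2}.

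The delicate step---and the main obstacle---is the $|Du|$-a.e.\ convergence $\Gamma_\eps'(\tilde u)\to\Gamma'(\tilde u)$, which is needed both to justify the chain rule on the Cantor part and to carry out the last passage to the limit. This in turn requires that the pushforward $\tilde u_\#|Du|$ is absolutely continuous with respect to $\mathcal L^1$, a property of diffuse $BV$-gradients that hinges precisely on the $DBV$ hypothesis $\mathcal H^{N-1}(S_u)=0$: absent jumps, the Cantor part $D^c u$ assigns zero mass to $\tilde u$-preimages of $\mathcal L^1$-null sets, so that $\Gamma'\circ\tilde u$ is well-defined as a Radon--Nikodym density and the dominated convergence argument closes the proof.
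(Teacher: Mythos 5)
Your outer scaffolding is fine, and for the record the paper offers no proof of this lemma at all: it only cites \cite[Proposition 2.7]{LS}, so your attempt has to stand on its own. The two reductions you perform do stand: deducing \eqref{composition} from \eqref{composition2} via the scalar $BV$ chain rule $|D(\Gamma\circ u)|=\Gamma'(\tilde u)\,|Du|$, and reducing Lipschitz $\Gamma$ to $\Gamma_\eps=\rho_\eps\ast\Gamma$ are both legitimate; in particular your key observation that the image measure of $|Du|$ under $\tilde u$ is absolutely continuous with respect to $\mathcal L^1$ is correct for $u\in DBV(\Omega)$ (coarea formula plus $\haus(S_u)=0$, so $|Du|$ is purely diffuse), and it does give $\Gamma_\eps'(\tilde u)\to\Gamma'(\tilde u)$ $|Du|$-a.e., hence convergence of $\Gamma_\eps'(u)(\z,Du)$ and weak* convergence of $(\z,D\Gamma_\eps(u))$ through \eqref{defmeasx144}.

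The genuine gap is the base case $\Gamma\in C^1$, which you dispatch in one sentence although it carries essentially the whole content of the lemma. The ingredients you invoke do not combine into $(\z,D\Gamma(u))=\Gamma'(u)(\z,Du)$: the identity $(\Gamma\circ u)^*=\Gamma(u^*)$ $\mathcal H^{N-1}$-a.e.\ together with \eqref{anzellotti-caselles} merely rewrites $(\z,D\Gamma(u))$ as $\dive(\Gamma(u)\z)-\Gamma(u^*)\dive\z$, which relates it to nothing involving $(\z,Du)$; and ``classical integration by parts on the absolutely continuous part of $Du$'' at best identifies the absolutely continuous parts of both pairings with $\Gamma'(u)\,\z\cdot\nabla u\,\mathcal L^N$, while saying nothing about the Cantor part $D^cu$ -- which for a $DBV$ function outside $W^{1,1}$ can carry all of $|Du|$, and is exactly where the density $\theta$ has to be identified. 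So the smooth case as you state it is not easier than the lemma itself. A genuine argument at this step uses, for instance, the coarea-type representation of the Anzellotti pairing, $\int\varphi\,\d(\z,Dv)=\int_{\R}\bigl(\int\varphi\,\d(\z,D\chi_{\{v>t\}})\bigr)\d t$, applied to $v=u$ and $v=\Gamma(u)$, together with $\{\Gamma(u)>\Gamma(t)\}=\{u>t\}$ and the fact that $\tilde u=t$ $\haus$-a.e.\ on $\partial^*\{u>t\}\setminus S_u$ (this is where $\haus(S_u)=0$ really enters), or an Anzellotti-style approximation of $u$; this is the substance of the cited result in \cite{LS}. Finally, note that you cannot patch the step by appealing to \eqref{eq-ms} or \eqref{f1}: in the paper Lemma \ref{l-ms} is itself proved from the present lemma, so that route would be circular.
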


In \cite[\S 3]{ACM_afst05} (see also \cite{Caselles_jde11}), the normal traces $[\z,\Sigma]^\pm$ of a vector field $\z\in X_{\mathcal{M}}(\Omega)$ are defined on an oriented $C^1$-hypersurface  $\Sigma \subset \Omega$:
$$
[\z,\Sigma]^\pm := [\z,\nu_{\Omega^\pm}],
$$
where $\Omega^\pm \Subset \Omega$ are open $C^1$ domains such that $\Sigma \subset \partial \Omega^\pm$ and $\nu_{\Omega^\pm} = \pm \nu_{\Sigma}$ (the definition is seen to be independent of $\Omega^\pm$ up to a set of zero $\haus$-measure).
In addition \cite[Proposition 3.4]{ACM_afst05}, it is proved that
\begin{equation}\label{Tresd}
(\dive \z) \res\Sigma = \left([\z,\Sigma]^+ - [\z,\Sigma]^-\right)\haus \res \Sigma.
\end{equation}

By localization, this notion is then extended to oriented countably $\haus$-rectifiable sets $\Sigma$ (these are countable union, up to a $\haus$-negligible set, of oriented $C^1$-hypersurfaces). Using this definition, from (\ref{Tresd}) one immediately gets the following:
\begin{lemma}\label{Ammbrdd} Let $\z \in X_{\mathcal{M}}(\Omega)$ and let $\Sigma \subset \Omega$ be an oriented countably $\haus$-rectifiable set. Then
$$
(\dive \z)\res\Sigma = \left([\z,\Sigma]^+ - [\z,\Sigma]^-\right)\haus\res\Sigma.
$$
\end{lemma}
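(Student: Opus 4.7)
The plan is to reduce the rectifiable case to the $C^1$-hypersurface case (\ref{Tresd}) by a covering/decomposition argument, using crucially that $\dive\z$ is absolutely continuous with respect to $\haus$.

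First I would invoke the definition of countable $\haus$-rectifiability to write
\[
\Sigma = N \cup \bigcup_{i\in\N} \Sigma_i,
\]
where $\haus(N)=0$ and each $\Sigma_i$ is a (Borel) subset of an oriented $C^1$-hypersurface $\widetilde\Sigma_i\subset\Omega$, with the orientation of $\Sigma_i$ inherited from $\Sigma$. After replacing $\Sigma_i$ by $\Sigma_i\setminus\bigcup_{j<i}\Sigma_j$, I may assume the $\Sigma_i$'s are pairwise disjoint. Since $\dive\z\ll\haus$ (recalled in Section~\ref{GreenAnz} above), the measure $(\dive\z)\res\Sigma$ gives no mass to $N$, and by $\sigma$-additivity
\[
(\dive\z)\res\Sigma \;=\; \sum_{i\in\N}(\dive\z)\res\Sigma_i.
\]

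Next I would apply (\ref{Tresd}) to each oriented $C^1$-hypersurface $\widetilde\Sigma_i$: because the one-sided normal traces $[\z,\widetilde\Sigma_i]^\pm$ are defined via $\Omega^\pm \Subset \Omega$ with $\widetilde\Sigma_i\subset\partial\Omega^\pm$ and are independent of $\Omega^\pm$ up to $\haus$-null sets, their restrictions to the Borel subset $\Sigma_i$ coincide $\haus$-a.e.\ with $[\z,\Sigma]^\pm\res\Sigma_i$ (this is precisely the localization that defines $[\z,\Sigma]^\pm$). Therefore, from (\ref{Tresd}),
\[
(\dive\z)\res\Sigma_i \;=\; \bigl([\z,\widetilde\Sigma_i]^+-[\z,\widetilde\Sigma_i]^-\bigr)\haus\res\Sigma_i \;=\; \bigl([\z,\Sigma]^+-[\z,\Sigma]^-\bigr)\haus\res\Sigma_i.
\]
Summing over $i$ and using that $\haus\res\Sigma = \sum_i \haus\res\Sigma_i$ (since $\haus(N)=0$ and the $\Sigma_i$'s are disjoint) yields the claim.

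The only genuine point requiring care, and what I would view as the main (mild) obstacle, is the verification that the extension by localization of the normal trace is well posed, i.e.\ that on the overlap of two different $C^1$-pieces $\widetilde\Sigma_i,\widetilde\Sigma_j$ carrying compatible orientations one has $[\z,\widetilde\Sigma_i]^\pm=[\z,\widetilde\Sigma_j]^\pm$ $\haus$-a.e. This, however, is already built into the definition of $[\z,\Sigma]^\pm$ recalled just before the statement (independence of the chosen $\Omega^\pm$ up to $\haus$-null sets) and is exactly the reason the authors say the result is immediate from (\ref{Tresd}).
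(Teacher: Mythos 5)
Your proposal is correct and is exactly the decomposition-and-localization argument that the paper treats as immediate: it gives no separate proof of Lemma \ref{Ammbrdd}, precisely because splitting $\Sigma$ into an $\haus$-null set (harmless since $\dive\z\ll\haus$) plus disjoint pieces of oriented $C^1$-hypersurfaces, applying \eqref{Tresd} on each piece, and summing is what the localized definition of $[\z,\Sigma]^\pm$ is designed to make automatic. Your closing remark on the consistency of traces on overlapping $C^1$-pieces is the right point to flag, and it is indeed covered by the $\haus$-a.e.\ well-posedness of the trace built into the definition recalled before the statement.
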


The next result is a consequence of Lemma \ref{lemmacaselles}.
\begin{lemma}\label{lemamb}
 Let $u\in BV(\Omega)\cap L^\infty(\Omega)$ and $\w\in X_{\mathcal{M}}(\Omega,\R^N)$. Then  \begin{equation}
    \label{traceproduct} [u\w,\nu_u]^{\pm}= u^{\pm}[\w,\nu_u]^{\pm}\quad \mathcal H^{N-1}\mbox{-a.e. on }\  J_{u}.
  \end{equation}
 \end{lemma}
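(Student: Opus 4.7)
The plan is to reduce the identity to formula \eqref{cas-trace} of Lemma \ref{lemmacaselles} via a localization argument along the jump set $J_u$. Since $u\in BV(\Omega)$, the set $J_u$ is countably $\haus$-rectifiable, so up to an $\haus$-null set it decomposes as a countable disjoint union of oriented $C^1$-hypersurfaces. I would work on each such piece $\Sigma\subset J_u$ separately. Around any $x_0\in\Sigma$, pick small $C^1$ open domains $\Omega^\pm\Subset\Omega$ with $\Sigma\cap B_r(x_0)\subset \partial\Omega^\pm$ and $\nu_{\Omega^\pm}=\pm\nu_u$ on $\Sigma$, exactly as in the localization recipe recalled in Section \ref{GreenAnz}. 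Since $\w|_{\Omega^\pm}\in X_\M(\Omega^\pm)$ and $u|_{\Omega^\pm}\in BV(\Omega^\pm)\cap L^\infty(\Omega^\pm)$, Lemma \ref{lemmacaselles} applies on each $\Omega^\pm$.

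Applying the boundary-trace identity \eqref{cas-trace} on $\Omega^\pm$ yields
\[
[u\w,\nu_{\Omega^\pm}] \,=\, u\,[\w,\nu_{\Omega^\pm}] \qquad \haus\text{-a.e.\ on } \partial\Omega^\pm,
\]
where on the right-hand side $u$ stands for the $BV$-boundary trace of $u|_{\Omega^\pm}$ on $\partial\Omega^\pm$. On $\Sigma$, this boundary trace coincides $\haus$-almost everywhere with the one-sided approximate limit of $u$ from the side occupied by $\Omega^\pm$, hence with $u^\pm$ under the orientation just fixed. Combining these identifications with the very definition $[\cdot,\nu_u]^\pm := [\cdot,\nu_{\Omega^\pm}]$ gives the claim $\haus$-a.e. on $\Sigma\cap J_u$; by covering $J_u$ up to an $\haus$-null set by countably many such $\Sigma$'s, the formula follows $\haus$-a.e. on $J_u$.

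The only real obstacle is bookkeeping — namely, ensuring that the orientation of $\Omega^\pm$ is compatible with the labeling of $u^\pm$ consistently on both sides of the identity. Once $\Omega^+$ is chosen so that its outward normal on $\Sigma$ agrees with $\nu_u$, the $BV$-trace of $u|_{\Omega^+}$ from inside $\Omega^+$ is by definition the approximate limit of $u$ from that side, which by the conventions of Section \ref{preli} is precisely $u^+$ (and symmetrically for the $-$ side). This identification of $BV$-boundary traces with approximate limits at $\haus$-a.e. jump point is classical, so no further analytic work is required beyond the invocation of \eqref{cas-trace}.
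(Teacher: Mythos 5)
Your argument is essentially the paper's own proof: the paper likewise observes that $u\w\in X_\M(\Omega)$ by \eqref{anzellotti-caselles}, that $J_u$ is countably $\haus$-rectifiable and oriented by $\nu_u$, and then reduces, via the way traces are defined over rectifiable sets, to proving $[u\w,\nu_{\Omega'}]^{\pm}=u^{\pm}[\w,\nu_{\Omega'}]^{\pm}$ on $C^1$ boundaries $\partial\Omega'\Subset\Omega$, which is exactly \eqref{cas-trace} of Lemma \ref{lemmacaselles} applied from each side together with the classical identification of one-sided $BV$ traces with $u^{\pm}$. The one point to state more carefully is the orientation bookkeeping (with the convention $\nu_{\Omega^{\pm}}=\pm\nu_\Sigma$, which side of $\Sigma$ the domain $\Omega^{+}$ occupies, hence whether its interior trace is $u^{+}$ or $u^{-}$), but since the same superscript appears on both sides of \eqref{traceproduct}, any labeling of the sides that is used consistently for $u^{\pm}$ and for $[\cdot,\nu_u]^{\pm}$ yields the claim --- the same convention issue the paper itself leaves implicit.
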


\begin{proof} By \eqref{anzellotti-caselles}, the vector field $\z:=u\w$ belongs to $X_\M(\Omega)$. As shown in  \cite[Theorem 3.78]{AFPBook}, $J_{u}$ is a countably $\haus$-rectifiable set oriented by the direction of $\nu_u$. Having in mind the way in which traces of $\w$ are defined over rectifiable sets, it suffices to prove that for any $\Omega'\Subset\Omega$ open with a $C^1$ boundary, then
$$
[\z,\nu_{\Omega'}]^{\pm}=u^\pm[\w,\nu_{\Omega'}]^\pm\quad \mbox{$\mathcal H^{N-1}$-a.e. on \ } \partial\Omega',
$$
which follows directly from Lemma \ref{lemmacaselles}.
\end{proof}

We conclude with two properties of the pairing \eqref{defmeasx144} for bounded $DBV$-functions.

\begin{lemma}\label{l-ms}
  Let $\z\in X_\M(\Omega)$ and let $ u,v\in DBV(\Omega)\cap L^\infty(\Omega) $. Then
  \begin{eqnarray}\label{eq-ms}
 & (u\z, D v)= u(\z,Dv)\ \ \text{as measures}, &
  \\
  \label{f1}
 & (\z,D(uv))=u(\z,Dv)+v(\z,Du)=(u\z,Dv)+(v\z,Du). &
  \end{eqnarray}
\end{lemma}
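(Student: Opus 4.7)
My plan is to deduce both (a) and (b) by applying the Anzellotti--Caselles identity \eqref{anzellotti-caselles} twice, combined with a diagonal identity obtained from Lemma \ref{anz} and a polarization argument. First I would apply \eqref{anzellotti-caselles} to the vector field $u\z \in X_\M(\Omega)$ paired against $v \in DBV(\Omega) \cap L^\infty(\Omega)$: since $v\cdot(u\z)=(uv)\z$,
\begin{equation*}
\dive\bigl((uv)\z\bigr) = v^* \dive(u\z) + (u\z, Dv) = u^* v^* \dive\z + v^*(\z, Du) + (u\z, Dv).
\end{equation*}
On the other hand $uv \in DBV(\Omega)\cap L^\infty(\Omega)$ (because $S_{uv}\subset S_u\cup S_v$ has zero $\mathcal{H}^{N-1}$-measure) and $(uv)^*=u^*v^*$ holds $\mathcal{H}^{N-1}$-a.e., hence $\dive\z$-a.e.; so applying \eqref{anzellotti-caselles} directly to $uv$ also gives $\dive((uv)\z)=u^*v^*\dive\z+(\z,D(uv))$. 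Comparing, and repeating with the roles of $u$ and $v$ exchanged, yields
\begin{equation*}
(\z, D(uv)) = v^*(\z, Du) + (u\z, Dv) = u^*(\z, Dv) + (v\z, Du),
\end{equation*}
which I denote by $(\sharp)$.

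Next I would establish the diagonal identity $(u\z, Du)=u(\z, Du)$. Setting $M:=\|u\|_\infty$, I pick a Lipschitz nondecreasing $\Gamma$ on $\R$ coinciding with $s\mapsto\tfrac12 s^2 + Ms$ on $[-M,M]$. Lemma \ref{anz} then gives
\begin{equation*}
(\z, D\Gamma(u)) = \Gamma'(u)(\z, Du) = u(\z, Du) + M(\z, Du).
\end{equation*}
On the other hand, the pairing $(\z, D\cdot)$ is linear on sums of $DBV(\Omega)\cap L^\infty(\Omega)$ functions---by \eqref{defmeasx144} and the absolute continuity of $\dive\z$ with respect to $\mathcal{H}^{N-1}$ one may replace $(f_1+f_2)^*$ by $f_1^*+f_2^*$ $\dive\z$-a.e.---so the left-hand side above also equals $\tfrac12(\z, D(u^2)) + M(\z, Du)$. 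Combining with $(\sharp)$ specialized to $v=u$ produces $(u\z, Du)=u(\z, Du)$. A standard polarization---applying this identity to $w:=u+v\in DBV(\Omega)\cap L^\infty(\Omega)$ and expanding bilinearly in $(\z,\cdot)$, then cancelling the diagonal terms---yields
\begin{equation*}
(u\z, Dv) + (v\z, Du) = u(\z, Dv) + v(\z, Du),
\end{equation*}
which I call $(\flat)$. Finally, subtracting the two expressions in $(\sharp)$ gives $(u\z, Dv) - u(\z, Dv) = (v\z, Du) - v(\z, Du)$; together with $(\flat)$ this forces both sides to vanish, which is (a). Substituting (a) back into $(\sharp)$ then delivers (b).

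The main obstacle I foresee is bookkeeping rather than a genuine difficulty. One must justify carefully the linearity of $(\z, D\cdot)$ over sums of functions in $DBV(\Omega)\cap L^\infty(\Omega)$, so that the splitting $(\z, D\Gamma(u))=\tfrac12(\z, D(u^2))+M(\z, Du)$ in Step~2 is legitimate, and one has to extend $s\mapsto\tfrac12 s^2 + Ms$ from $[-M,M]$ to a globally Lipschitz nondecreasing function on $\R$ in order to invoke Lemma \ref{anz}. Both reductions rely on the full strength of the $DBV$ hypothesis (namely $\mathcal{H}^{N-1}(S_u\cup S_v)=0$) combined with the absolute continuity of $\dive\z$ with respect to $\mathcal{H}^{N-1}$.
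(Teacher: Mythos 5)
Your proposal is correct, and it differs from the paper's proof in one essential respect. Your identity $(\sharp)$, namely $(\z,D(uv))=v(\z,Du)+(u\z,Dv)$, is obtained exactly as in the paper: a repeated application of Lemma \ref{lemmacaselles} (i.e.\ of \eqref{anzellotti-caselles}, once to the field $u\z$, once to $\z$), with the identification of precise representatives justified by $\dive\z\ll\haus$ and $\mathcal H^{N-1}(S_u\cup S_v)=0$; this is precisely the paper's three-line computation for \eqref{f1}. The divergence is in \eqref{eq-ms}: the paper does not prove it but quotes \cite[Proposition 2.3]{MS_acv13}, whose argument works directly with the Radon--Nikodym densities $\theta(\z,D\cdot)$ via Lemma \ref{anz}, whereas you rederive \eqref{eq-ms} internally. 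Your route --- Lemma \ref{anz} applied to a Lipschitz nondecreasing extension of $s\mapsto\tfrac12 s^2+Ms$ to get $(\z,D(u^2))=2u(\z,Du)$, then $(\sharp)$ at $v=u$ to get the diagonal identity $(u\z,Du)=u(\z,Du)$, then polarization at $w=u+v$ to get $(\flat)$, and finally the elementary cancellation between $(\sharp)$ and $(\flat)$ (legitimate, since all objects are finite Radon measures) --- is sound and has the merit of making the lemma self-contained within the results already stated in Section \ref{GreenAnz}. The price is the bookkeeping you yourself identify: additivity of the pairing in the function slot, via $(w_1+w_2)^*=w_1^*+w_2^*$ $\dive\z$-a.e., which uses exactly $\dive\z\ll\haus$ together with the $DBV$ hypothesis, and (a point you should state explicitly, though it is immediate from \eqref{defmeasx144} and additivity of the divergence measure) additivity in the vector-field slot, $((u+v)\z,Dw)=(u\z,Dw)+(v\z,Dw)$, which your bilinear expansion in the polarization step requires; you also use $u\in L^\infty(\Omega)$ to make the quadratic monotone, which is harmless under the stated hypotheses. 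In short: same mechanism as the paper for \eqref{f1}, a genuinely different and more self-contained (if longer) argument replacing the external citation for \eqref{eq-ms}.
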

\begin{proof} The proof of  \eqref{eq-ms} follows line by line the one in  \cite[Proposition 2.3]{MS_acv13} which is based on Lemma \ref{anz} above. A repeated application of Lemma \ref{lemmacaselles} gives
\begin{eqnarray*}
(\z,D(uv)) & = & -uv\dive\z+\dive(uv\z)
\\ &=& -u(\dive(v\z)-(\z,Dv)) + u\dive(v\z) + (v\z,Du)
\\ &=& u(\z,Dv) + (v\z,Du)
\end{eqnarray*}
and \eqref{f1} follows from \eqref{eq-ms}.
\end{proof}

\section{Approximating problems}\label{approx}

We let
$$
|\boeta|_\eps:=\sqrt{|\boeta|^2+\eps^2}
$$
and we note that
\begin{equation}\label{lgp1}
\frac{|\boeta|^2}{|\boeta|_\eps} = \frac{|\boeta|_\eps^2-\eps^2}{|\boeta|_\eps} \ge |\boeta|- \eps.
\end{equation}
For $\eps\in (0,1)$ we consider the following approximating problems:
\begin{equation}
\label{resolvent-eps}
\left\{
\begin{array}{ll}
u - f=\dive\left( (\vare + |u|)^m\frac{\nabla u}{|\nabla u|_\eps}+\eps \nabla u\right) & \mbox{in }\ \Omega
\\
u=g & \mbox{on }\ \partial\Omega.
\end{array}
\right.
\end{equation}
In this section, using standard monotonicity arguments (see for instance \cite{Browder_pnas77} and \cite{ShowalterBook}), we prove the following result.
\begin{lemma}\label{exi-eps}
For any $m\in\re$, any $f\in L^\infty(\Omega)$, and any $g\in L^\infty(\partial\Omega)$, there exists a solution $u_\eps\in H^1(\Omega)\cap L^\infty(\Omega)$ of \eqref{resolvent-eps}  with data $(f,g)$ in the sense that
\begin{equation}\label{weak-eps}
\int_\Omega (u_\eps-f)\varphi = - \int_\Omega \left( (\eps + |u_{\eps}|)^m\frac{\nabla u_\eps}{|\nabla u_\eps|_\eps}+\eps \nabla u_\eps\right)\cdot \nabla \varphi \quad\mbox{ for all $\varphi\in H^1_0(\Omega)$}
\end{equation}
and $u_\eps=g$ on $\partial\Omega$. Furthermore,
\begin{equation}\label{li}
\|u_{\vare}\|_{L^{\infty}(\Omega)}\leq \max\{\|f\|_{L^{\infty}(\Omega)},\|g\|_{L^{\infty}({\partial\Omega})}\}
\end{equation}
and $\ue\ge 0$ if $f\ge 0$ and $g\ge 0$.
\end{lemma}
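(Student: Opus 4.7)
Fix $\varepsilon\in(0,1)$ and set $K:=\max(\|f\|_{L^\infty(\Omega)},\|g\|_{L^\infty(\partial\Omega)})$. Since $s\mapsto(\varepsilon+|s|)^m$ is unbounded for $m\neq 0$, I first truncate it: choose $M>K$ and put
\[
a_M(s):=(\varepsilon+\min(|s|,M))^m,\qquad \mathcal F(s,\xi):=a_M(s)\,\xi/|\xi|_\varepsilon+\varepsilon\,\xi,
\]
so that $0<c_1\le a_M\le c_2$ and $\mathcal F(s,\cdot)$ is the gradient of the convex function $\xi\mapsto a_M(s)|\xi|_\varepsilon+(\varepsilon/2)|\xi|^2$, hence a Carath\'eodory field with linear growth in $\xi$ which is monotone in $\xi$. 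Approximating $g\in L^\infty(\partial\Omega)$ by smooth data (and passing to the limit in $n$ at the end, using the uniform estimate \eqref{li}), I may assume $g$ is the trace of some $\tilde g\in H^1(\Omega)\cap L^\infty(\Omega)$ with $\|\tilde g\|_{L^\infty(\Omega)}\le K$.

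\textbf{Existence for the truncated problem.} Writing $u=\tilde g+w$ with $w\in H^1_0(\Omega)$, define $A\colon H^1_0(\Omega)\to H^{-1}(\Omega)$ by
\[
\langle Aw,\varphi\rangle:=\int_\Omega\mathcal F(\tilde g+w,\nabla(\tilde g+w))\cdot\nabla\varphi+\int_\Omega(\tilde g+w-f)\,\varphi.
\]
Then $A$ is bounded and demicontinuous; it is coercive on $H^1_0(\Omega)$ because the quadratic contributions $\varepsilon\|\nabla w\|_{L^2}^2+\|w\|_{L^2}^2$ dominate all remaining terms by Young's inequality; its principal part is a Leray-Lions operator and its lower-order part is compact from $H^1_0(\Omega)$ into $H^{-1}(\Omega)$ (via the compact embedding $L^2\hookrightarrow H^{-1}$), so $A$ is pseudomonotone. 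Browder's surjectivity theorem (see \cite{Browder_pnas77,ShowalterBook}) then yields $w\in H^1_0(\Omega)$ with $Aw=0$, and $u_\varepsilon:=\tilde g+w\in H^1(\Omega)$ satisfies \eqref{weak-eps} with $a_M(u_\varepsilon)$ in place of $(\varepsilon+|u_\varepsilon|)^m$.

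\textbf{$L^\infty$ bound, nonnegativity, and removal of truncation.} Test the truncated weak formulation with $\varphi:=(u_\varepsilon-K)^+\in H^1_0(\Omega)$, admissible since $u_\varepsilon\le K$ on $\partial\Omega$. On $\{u_\varepsilon>K\}$ one has $u_\varepsilon-f>0$ and $\mathcal F(u_\varepsilon,\nabla u_\varepsilon)\cdot\nabla u_\varepsilon=a_M(u_\varepsilon)|\nabla u_\varepsilon|^2/|\nabla u_\varepsilon|_\varepsilon+\varepsilon|\nabla u_\varepsilon|^2\ge0$, so both contributions in the tested identity are nonnegative; since they sum to zero, they must vanish, forcing $|\{u_\varepsilon>K\}|=0$. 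A symmetric test with $\varphi=-(u_\varepsilon+K)^-$ gives $u_\varepsilon\ge -K$, proving \eqref{li}. When $f,g\ge0$, testing with $\varphi=u_\varepsilon^-$ (admissible by $g\ge 0$) gives analogously $u_\varepsilon\ge 0$. Finally, since $|u_\varepsilon|\le K<M$ a.e., the truncation is inactive, $a_M(u_\varepsilon)=(\varepsilon+|u_\varepsilon|)^m$ a.e., and $u_\varepsilon$ solves the original problem \eqref{resolvent-eps}.

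\textbf{Main obstacle.} The one delicate point is the pseudomonotonicity of the principal part of $A$: because $a_M$ depends on the unknown, monotonicity of $\mathcal F(s,\cdot)$ alone does not allow one to pass to the limit. For $w_n\rightharpoonup w$ in $H^1_0(\Omega)$ with $\limsup\langle Aw_n,w_n-w\rangle\le0$, one uses Rellich and the continuity of $a_M$ to get $a_M(\tilde g+w_n)\to a_M(\tilde g+w)$ strongly in every $L^p(\Omega)$, then applies Minty's trick together with strict monotonicity of $\mathcal F(s,\cdot)$ to extract $\nabla w_n\to\nabla w$ almost everywhere, and concludes strong $L^2$ convergence by Vitali; this is the only step where the boundedness of $a_M$ provided by the truncation is essential.
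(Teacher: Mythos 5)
Your proposal is correct and follows essentially the same route as the paper: truncate the coefficient so that it is bounded above and below (the paper uses $T_{1/\delta}(|u|)$ in \eqref{resolvent-eps-delta}, you use $a_M$), subtract an $H^1$ lift of $g$, obtain existence from Browder-type surjectivity for the resulting coercive operator (the paper delegates to \cite[Corollary 1]{Browder_pnas77} the pseudomonotonicity/Leray--Lions argument that you spell out by hand, using the strict monotonicity in $\xi$ coming from the convex Lagrangian and the $\eps\nabla u$ term), then prove \eqref{li} by testing with $(u_\eps-K)^+$ (and its negative counterpart), observe the truncation is inactive so $u_\eps$ solves \eqref{resolvent-eps}, and get nonnegativity by testing with $u_\eps^-$. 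The only deviation, the preliminary approximation of $g$ by smooth boundary data with a limit passage ``using \eqref{li}'', is both unnecessary and under-justified (uniform $L^\infty$ bounds alone do not control traces); as in the paper, one should simply fix an extension $\tilde g\in H^1(\Omega)\cap L^\infty(\Omega)$ with trace $g$, which is implicitly assumed to exist.
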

\begin{proof}
Fix $\delta>0$ and consider the following auxiliary problems:
\begin{equation}
\label{resolvent-eps-delta}
\left\{
\begin{array}{ll}
u - f=\dive\left( (\vare + T_{1/\delta}(|u|))^m\frac{\nabla u}{|\nabla u|_\eps}+\eps \nabla u\right) & \mbox{in }\ \Omega
\\
u=g & \mbox{on }\ \partial\Omega.
\end{array}
\right.
\end{equation}
Fix $\tilde g\in H^1(\Omega)$ such that $\tilde g=g$ on $\partial\Omega$, let $w=u-\tilde g$, and let
$$
A_0(x,w)=w+\tilde g, \quad A_1(x,w,\bxi)=(\vare+ T_{1/\delta}(|w+\tilde g|))^m\frac{\bxi+\nabla \tilde g}{|\bxi+\nabla \tilde g|_\eps}+\eps (\bxi+\nabla \tilde g).
$$
Then \eqref{resolvent-eps-delta} is equivalent to solving
$$
A_0(x,w)-\dive (A_1(x,w,\nabla w))=f, \quad w=0 \ \mbox{ on $\partial\Omega$.}
$$
We note that
\begin{equation}\label{hj}
0\le (\vare+ T_{1/\delta}(|w+\tilde g|))^m \le C
\end{equation}
for some $C>0$ (depending on $\eps$, $\delta$, and $m$). {Existence of solutions follows from, e.g., \cite[Corollary 1]{Browder_pnas77} with $p=2$ in the space $H^1_0(\Omega)$. For its applicability, we need to check:}
\begin{itemize}\item boundedness of $|A_0|$ and $|A_1|$, which follows from
$$
|A_0(x,w)| + |A_1(x,w,\bxi)| \stackrel{\eqref{hj}}\le |w| + \eps |\bxi| +\underbrace{C+|\tilde g|+\eps|\nabla \tilde g|}_{\in L^2(\Omega)};
$$
\item monotonicity of $A_1$, in form of
\begin{eqnarray*}
\left(A_1(x,w,\bxi_1)-A_1(x,w,\bxi_2)\right)\cdot (\bxi_1-\bxi_2) >0 \quad\mbox{for all $\bxi_1\ne\bxi_2$,}
\end{eqnarray*}
which follows from the convexity of the associated Lagrangian,
$$
f(x,w,\bxi)= (\vare+ T_{1/\delta}(|w+\tilde g|))^m|\bxi+\nabla \tilde g|_\eps+\frac{\eps}{2} |\bxi+\nabla \tilde g|^2 \quad (\nabla_{\bxi} f= A_1);
$$
\item coercivity, which follows from
$$
A_1(x,w,\bxi)\cdot\bxi  \stackrel{\eqref{hj}}\ge\frac{\eps}{2}|\bxi|^2 - \underbrace{\left(C |\nabla \tilde g| + \frac{\eps}{2}|\nabla \tilde g|^2\right)}_{\in L^1(\Omega)}.
$$
\end{itemize}
{Uniqueness easily follows by monotonicity.} Therefore \eqref{resolvent-eps-delta} has a unique solution, $u_{\eps,\delta}$. Let   ${k}:=  \max\{\|f\|_{L^{\infty}(\Omega)},\|{ g}\|_{L^{\infty}({\partial\Omega})}\}$, and use $(u_{\eps,\delta} -k)_+:=\max(u_{\eps,\delta} -k,0)$ as test function in \eqref{weak-eps}. We obtain
$$
\int_\Omega (u_{\eps,\delta}-k)_+(u_{\eps,\delta}-f) \le 0,
$$
hence $u_{\eps,\delta} \le k$.  Choosing $\delta < 1/k$, we have $ T_{1/\delta}(|u_{\eps,\delta}|)=|u_{\eps,\delta}|$, hence $u_\eps:=u_{\eps,\delta}$ is a solution to \eqref{resolvent-eps}. Provided $g\ge 0$, choosing $u_-:=\max\{0,-u\}$ as test function in \eqref{weak-eps} we obtain
$$
\int_\Omega (u_{\eps})_- (u_\eps-f) \ge 0,
$$
hence $\ue\ge 0$ if both $f\ge 0$ and $g\geq 0$.
\end{proof}

\section{The singular case}\label{sing}

In this section we study (\ref{pde}) in the singular case, $m<0$. We assume:
\begin{equation}
\label{hp-f-resolvent}
f\in L^\infty(\Omega), \ \ f\geq 0,
\end{equation}
\begin{equation}
\label{hp-g-resolvent}
g\in L^\infty(\partial\Omega), \ g\ge G_0>0.
\end{equation}
Our definition of solution is the following.

\begin{definition}\label{defi1}
Assume $m<0$, \eqref{hp-f-resolvent}, and \eqref{hp-g-resolvent}. A function $u:\Omega\mapsto [0,+\infty)$ is a solution to problem (\ref{pde}) with data $(f,g)$ if
$u\in {BV(\Omega)}\cap L^\infty(\Omega)$, $1/u\in L^\infty(\Omega)$, and there exists a \it gradient-director field $\w\in X_\M(\Omega)$ such that $\|\w\|_{\infty}\leq 1$ and $\z:=u^m\w$ satisfies
\begin{equation}\label{identify-w-new}
{|D \phi(u)|\le (\z,D u)} \quad \mbox{as  measures {in  $\Omega$}, }
\end{equation}
\begin{equation}\label{sm9}
u-f=\dive \z \quad\mbox{in $\mathcal D'(\Omega)$}
\end{equation}
and
\begin{subequations}\label{boundcond}
\begin{equation}
\label{boundcondu<g}
u\le g \quad\mbox{$\mathcal H^{N-1}$-a.e. on $\partial\Omega$,}
\end{equation}
\begin{equation}\label{boundcond=}
 [\z,\nu]=
u^{m} \ \ \  {\rm if \ } u< g \quad\mbox{$\mathcal H^{N-1}$-a.e. on $\partial\Omega$.}
\end{equation}
\end{subequations}
\end{definition}

\begin{remark}\label{reg-zw}
Since $s\mapsto s^m$ is locally Lipschitz in $(0,+\infty)$,
$$
\left.
\begin{array}{l}
u\in BV(\Omega)\cap L^\infty(\Omega;[0,+\infty))
\\
1/u\in L^\infty(\Omega)
\end{array}\right\} \ \Rightarrow \ u^m\in BV(\Omega) \cap L^\infty(\Omega).
$$
In addition, by \eqref{hp-f-resolvent} and \eqref{sm9}, $\dive \z\in L^\infty(\Omega)$; hence $\z \in X(\Omega)$.
\end{remark}

The main result of this section is the following.
\begin{theorem}\label{exi}
Assume $m<0$, \eqref{hp-f-resolvent}, and \eqref{hp-g-resolvent}. Then there exists a unique solution $u$ of \eqref{pde} with data $(f,g)$ in the sense of Definition \ref{defi1}. {In addition, $u\in DBV(\Omega)$ and
\begin{equation}
\label{identify-w-plus}
{|D\phi(u)|}=(\z,Du) \quad\mbox{and} \quad (\w,u)=|Du| \quad\mbox{as measures in $\Omega$.}
\end{equation}
}
\end{theorem}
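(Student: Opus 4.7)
The plan is to construct a solution by passing to the limit in the approximate problems \eqref{resolvent-eps}, to identify the resulting measure-theoretic pairings via the Anzellotti-type calculus from Section \ref{preli}, and to invoke Theorem \ref{comp} for uniqueness. Accordingly, I would focus on existence and on the identities \eqref{identify-w-plus}. Start from the approximate solutions $u_\eps \in H^1(\Omega)\cap L^\infty(\Omega)$ given by Lemma \ref{exi-eps}, and set
\begin{equation*}
\w_\eps := \frac{\nabla u_\eps}{|\nabla u_\eps|_\eps}\,,\qquad \z_\eps := (\eps+u_\eps)^m\,\w_\eps + \eps\,\nabla u_\eps\,.
\end{equation*}
By \eqref{li}, $0\le u_\eps\le M:=\max(\|f\|_\infty,\|g\|_\infty)$. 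The first estimate, peculiar to the singular case $m<0$, is a uniform lower bound $u_\eps\ge c_0>0$ depending only on $G_0$, $\|f\|_\infty$ and $m$; I would derive it by testing \eqref{weak-eps} with a function built from $\phi$ whose derivative concentrates where $u_\eps$ is small, so that the singular coefficient $(\eps+u_\eps)^m$ forces $u_\eps$ to stay away from zero. This bound guarantees $1/u\in L^\infty(\Omega)$ in the limit, and it keeps $(\eps+u_\eps)^m$ uniformly bounded.

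Next I would derive a $BV$-estimate by testing \eqref{weak-eps} with $\phi(u_\eps)-\phi(\tilde g)$, where $\tilde g\in H^1(\Omega)$ is an extension of $g$ bounded below by $G_0$. Using \eqref{lgp1} together with the bounds above, this yields
\begin{equation*}
\int_\Omega |\nabla \phi(u_\eps)|\,\d x \le C\,,\qquad \eps\int_\Omega |\nabla u_\eps|^2\,\d x \le C\,,
\end{equation*}
and hence, since $\phi$ is bi-Lipschitz on $[c_0,M]$, a uniform $BV(\Omega)$-bound on $u_\eps$. Extract a (not relabelled) subsequence such that $u_\eps\to u$ in $L^1(\Omega)$ and a.e., with $c_0\le u\le M$ and $u\in BV(\Omega)\cap L^\infty(\Omega)$. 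Since $\|\w_\eps\|_\infty\le 1$ and $\|\z_\eps\|_\infty$ is controlled, modulo the term $\eps\,\nabla u_\eps$ which vanishes in $\mathcal D'(\Omega)$ by the $L^2$-bound above, extract weak-$*$ limits $\w,\z\in L^\infty(\Omega;\R^N)$ with $\|\w\|_\infty\le 1$; dominated convergence on $(\eps+u_\eps)^m\to u^m$ then forces $\z=u^m\w$. Passing to the limit in \eqref{weak-eps} against $\varphi\in \mathcal D(\Omega)$ gives \eqref{sm9}, whence $\z\in X(\Omega)$ by Remark \ref{reg-zw}.

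The inequality \eqref{identify-w-new} is obtained by rewriting the energy identity for $u_\eps$ via \eqref{lgp1} as
\begin{equation*}
\int_\Omega |\nabla\phi(u_\eps)|\,\psi\,\d x \le \int_\Omega \z_\eps\cdot\nabla u_\eps\,\phi'(u_\eps)\,\psi\,\d x + o(1),\qquad \psi\in C_c^\infty(\Omega),\ \psi\ge 0,
\end{equation*}
and letting $\eps\to 0$: Reshetnyak lower semicontinuity handles the left-hand side, while the chain rule \eqref{composition2} together with Lemmas \ref{lemmacaselles} and \ref{l-ms} identifies the limit of the right-hand side with $\int_\Omega \psi\,\d (\z,Du)$. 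For the equality in \eqref{identify-w-plus} I would test \eqref{sm9} with $\phi(u)-\phi(\tilde g)$ and apply the Green formula \eqref{Green}; comparing the resulting global identity with \eqref{identify-w-new} forces $|D\phi(u)|=(\z,Du)$, after which $(\w,Du)=|Du|$ follows from \eqref{eq-ms} and \eqref{composition2}. The boundary conditions \eqref{boundcondu<g}--\eqref{boundcond=} are extracted from the same Green-formula computation: $u\le g$ $\haus$-a.e.\ on $\partial\Omega$ comes from the trace behaviour of $u_\eps$, while on $\{u<g\}\subset\partial\Omega$ the only normal trace compatible with the limit identity is $[\z,\nu]=u^m$. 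Finally, $u\in DBV(\Omega)$ is a consequence of Lemma \ref{cont}.

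The main obstacle is this last step: upgrading the $\eps$-level energy identity to the measure-theoretic (in)equalities \eqref{identify-w-new}--\eqref{identify-w-plus}. The delicate point is twofold. The product $\z\cdot\nabla u$ becomes a singular object $(\z,Du)$ that must be handled through Anzellotti pairings, and the chain rule needed to pass from $\phi(u)$ back to $u$ in fact requires the $DBV$ regularity — creating a small circularity that must be unwound carefully. Moreover, the boundary datum $g$ is in general not attained, so the precise form of the boundary condition \eqref{boundcondu<g}--\eqref{boundcond=} has to be read off from the limit of Green-type identities rather than from a trace equality; this is the source of the asymmetry between the cases $m<0$ and $m>0$ anticipated in \eqref{bc-}--\eqref{bc+}.
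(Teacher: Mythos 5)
Your overall architecture (approximate problems from Lemma \ref{exi-eps}, passage to the limit, Anzellotti-type pairings, Theorem \ref{comp} for uniqueness) matches the paper, but two essential steps are not actually supplied. The first is the uniform positive lower bound on $u_\eps$. Your proposal to obtain it ``by testing \eqref{weak-eps} with a function built from $\phi$ whose derivative concentrates where $u_\eps$ is small'' is not a proof: since $f$ is only assumed nonnegative (it may vanish on large sets), such tests yield integral control of $|\nabla\phi(u_\eps)|$ on $\{u_\eps\ \mbox{small}\}$, not a pointwise bound, and no iteration can close using the zeroth-order term alone. Moreover the dependence you claim (only on $G_0$, $\|f\|_{L^\infty(\Omega)}$ and $m$) is provably impossible: by Lemma \ref{L-explicit}(i) and Proposition \ref{L-q<0}, on a ball $B_R(0)$ with $f\equiv 0$ every solution satisfies $u\le (N/R)^{1/(1-m)}$ \emph{for every} boundary datum, so any lower bound must degrade with the size of $\Omega$. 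The paper's Lemma \ref{posit} handles this by constructing the explicit subsolution $v_\eps(x)=\eps|x|^2/2+\alpha$, with $\alpha$ chosen in terms of $R$, $G_0$ and $m$ as in \eqref{def-alpha}, and comparing at the $\eps$-level; some such barrier/comparison argument is needed and is missing from your outline.

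The second gap is the boundary conditions \eqref{boundcond}. Saying that $u\le g$ ``comes from the trace behaviour of $u_\eps$'' does not work: traces are not continuous under $L^1$ or weak-$*$ $BV$ convergence, and indeed the trace of $u$ in general differs from $g$. In the paper, testing with $\varphi\bigl(F(u_\eps)-F(\tilde g)\bigr)$ and using the lower-semicontinuity result \emph{with boundary term} of \cite{ACMM_ma10} (interior Reshetnyak semicontinuity, which is all you invoke, does not produce that term) yields the family of inequalities \eqref{ineq-z-F}--\eqref{ineqboundary} for all nondecreasing $F$; then $u\le g$ follows from \eqref{ineqboundary-Id} combined with $[\z,\nu]=u^m[\w,\nu]$, $\|[\w,\nu]\|_\infty\le 1$ and the strict monotonicity of $\phi'$, while $[\z,\nu]=u^m$ on $\{u<g\}$ requires the specific choices $F(s)=-s^{mp}/p$ and the limit $p\to+\infty$ (Lemma \ref{lemma36}). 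Nothing in your sketch produces \eqref{boundcond=}; note also that Definition \ref{defi1} requires $\w\in X_\M(\Omega)$, which must be verified (the paper does it via $\dive\w=(u^{-m})^*\dive\z+(\z,Du^{-m})$) before any normal trace of $\w$ makes sense. Finally, your route to the equality in \eqref{identify-w-plus} (testing \eqref{sm9} with $\phi(u)-\phi(\tilde g)$ and comparing global Green identities) founders on precisely the circularity you acknowledge, since the chain rule of Lemma \ref{anz} requires $u\in DBV(\Omega)$; the paper resolves this in Lemma \ref{cont} by first proving $\haus(J_u)=0$ through the normal-trace analysis on $J_u$ (Lemmas \ref{Ammbrdd} and \ref{lemamb}) and the strict monotonicity of $s^m$, and only then applying the chain rule. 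Citing Lemma \ref{cont} wholesale, as you do for the $DBV$ property, would be the consistent choice, but then your independent derivation of the equality should be dropped, as written it is not a valid argument.
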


\subsection{Existence}
The proof of the existence part of Theorem \ref{exi} follows from Lemmas \ref{posit}-\ref{cont} below.

\begin{lemma}[A priori lower bound]\label{posit}
Assume $m<0$,  \eqref{hp-f-resolvent}, and \eqref{hp-g-resolvent}. Positive constants $\alpha$ and $\eps_0$, depending only on $\Omega$ and $G_0$, exist such that for any $\eps<\eps_0$ the solution $u_\eps$ of \eqref{resolvent-eps} with data $(f,g)$ satisfies
$$
u_{\vare}\geq\alpha>0.
$$
\end{lemma}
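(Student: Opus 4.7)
The plan is to prove the lower bound by comparison with an explicit radial subsolution. Fix an arbitrary $x_0\in\Omega$ and set $R:=\diam(\Omega)$, so that $\Omega\subset B_R(x_0)$. I will construct a radial function $v(x)=V(|x-x_0|)$ on $B_R(x_0)$ satisfying three properties: $V(0)=\alpha$ for some $\alpha\in(0,G_0)$ depending only on $\Omega$ and $G_0$; $V(R)\le G_0$, so that $v\le G_0\le g=u_\eps$ on $\partial\Omega$; and $v-f\le \dive[(\eps+v)^m\nabla v/|\nabla v|_\eps+\eps\nabla v]$ pointwise in $\Omega$ for all $\eps$ small. Since \eqref{resolvent-eps} is uniformly elliptic (through the $\eps\nabla u$ term), a comparison argument then gives $u_\eps\ge v$ in $\Omega$, and in particular $u_\eps(x_0)\ge V(0)=\alpha$; the arbitrariness of $x_0$ produces the uniform bound.

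For $N\ge 2$ the natural candidate is the affine profile $V(r)=\alpha+\beta r$ with $\beta=\eps$. In radial coordinates, with the flux $\Psi(r):=(\eps+V)^m V'/\sqrt{V'^2+\eps^2}+\eps V'$, the subsolution condition reduces, using $f\ge0$, to $\alpha+\beta r\le \Psi'(r)+\tfrac{N-1}{r}\Psi(r)$ for $r\in(0,R]$. Direct computation gives
\[
\Psi'(r)=\frac{m\beta^2}{\sqrt{\beta^2+\eps^2}}\,(\eps+\alpha+\beta r)^{m-1},
\]
which is negative but of order $|m|\eps\alpha^{m-1}\to0$ as $\eps\to0$. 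The mean-curvature contribution $\tfrac{N-1}{r}\Psi(r)$ is positive, and at the critical point $r=R$ it is bounded below by a quantity tending to $\tfrac{(N-1)\alpha^m}{\sqrt2\,R}$ as $\eps\to 0$; at smaller $r$ the factor $1/r$ only strengthens it. Thus if $\alpha\in(0,G_0)$ is chosen strictly smaller than the threshold $\alpha^{1-m}<\tfrac{N-1}{\sqrt2\,R}$, there is a strictly positive gap $\delta_0=\delta_0(\alpha,R,N,m)>0$ in the subsolution inequality for all $\eps$ small, and the boundary compatibility $\alpha+\eps R\le G_0$ holds for $\eps\le\eps_0:=(G_0-\alpha)/R$. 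In dimension $N=1$ the mean-curvature term is absent; an analogous smooth barrier $V(r)=\alpha+\eps\eta(r)$, with $\eta$ built via $\eta'/\sqrt{1+\eta'^2}=c\alpha^{1-m}r$, supplies the needed subsolution provided $R\alpha^{1-m}<1$.

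For the comparison step, I would test the difference of the equations for $u_\eps$ and $v$ against $(v-u_\eps)_+\in H_0^1(\Omega)$ and use the strict monotonicity in $p$ of the principal part $A(s,p)=(\eps+s)^mp/|p|_\eps+\eps p$ already established in the proof of Lemma \ref{exi-eps}. Splitting
\[
A(u_\eps,\nabla u_\eps)-A(v,\nabla v)=\bigl[A(u_\eps,\nabla u_\eps)-A(u_\eps,\nabla v)\bigr]+\bigl[(\eps+u_\eps)^m-(\eps+v)^m\bigr]\frac{\nabla v}{|\nabla v|_\eps},
\]
the first bracket is handled by monotonicity in $p$; the second bracket, controlled by $|m|(\eps+u_\eps\wedge v)^{m-1}(v-u_\eps)_+$, is absorbed by the strict subsolution gap $\delta_0>0$ produced in the previous step (or, equivalently, by a Kato-type $L^1$-contraction argument applied to the uniformly elliptic $\eps$-problem). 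This yields $(v-u_\eps)_+\equiv 0$ and therefore $u_\eps\ge\alpha$ on $\Omega$.

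The main obstacle I foresee is the comparison step: the $s$-dependence of $A$ is decreasing for $m<0$, hence non-monotone, so the argument requires either the strict subsolution margin $\delta_0>0$ constructed above or an $L^1$-accretivity/Kato inequality for the regularized equation. The dimension-dependent choice of radial profile is a secondary but purely technical point. Once the comparison is in hand, the conclusion follows at once from the arbitrariness of $x_0$.
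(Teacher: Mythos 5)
Your overall strategy is the same as the paper's: build an explicit radial subsolution of the regularized problem \eqref{resolvent-eps} lying below $g$ on $\partial\Omega$ and above a fixed $\alpha>0$, then compare. (The paper's barrier is $v_\eps=\eps|x|^2/2+\alpha$, whose normalized gradient $x/\sqrt{1+|x|^2}$ is $\eps$-independent and smooth; it works for every $N\ge 1$ in one stroke, so your dimension split and the cone-vertex technicality at $x_0$ are avoidable. Also note that once $u_\eps\ge v$ is known you are done, since $v\ge\alpha$ everywhere; varying $x_0$ adds nothing.)

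The genuine gap is in the comparison step as you primarily argue it. On $\{v>u_\eps\}$ you have no lower bound on $u_\eps$ (that is precisely what the lemma is to prove), so the coefficient difference $(\eps+u_\eps)^m-(\eps+v)^m$ is only controlled through the Lipschitz constant $|m|\eps^{m-1}$ of $s\mapsto(\eps+s)^m$ on $[0,+\infty)$, which blows up as $\eps\to0$. Testing with $(v-u_\eps)_+$ the resulting term is bounded by $|m|\eps^{m-1}\int (v-u_\eps)_+|\nabla (v-u_\eps)_+|$, and neither the $\eps$-independent margin $\delta_0$ nor the coercivity $\eps\int|\nabla(v-u_\eps)_+|^2$ can absorb it: Young's inequality leaves a factor of order $\eps^{2m-3}\to+\infty$, so the absorption you describe fails. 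The route that does work is the one you mention only parenthetically (and it is not ``equivalent''): a Kato-type argument at \emph{fixed} $\eps$, i.e.\ test the difference of the two inequalities with $\tfrac1\delta T_\delta\bigl((v-u_\eps)_+\bigr)$, drop the $p$-monotone part, and bound the coefficient-difference term by $|m|\eps^{m-1}\int_{\{0<v-u_\eps<\delta\}}|\nabla(v-u_\eps)|$, which tends to $0$ as $\delta\to0$ because $\nabla(v-u_\eps)=0$ a.e.\ on $\{v=u_\eps\}$; this yields $\int_\Omega(v-u_\eps)_+\le 0$ with no need of a strict subsolution margin. This truncation argument is exactly what the paper invokes when it says the comparison is ``analogous, though simpler'' to the proof of Theorem \ref{comp}. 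With that replacement (and a minor justification of the weak subsolution inequality across the vertex of your cone, or simply the paper's smooth quadratic barrier), your proof is complete.
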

\begin{proof}
Let $R>0$ be such that $\Omega\subset B(0;R)$. We choose \begin{equation}\label{def-alpha} 0<\alpha<\min\left\{\left(\frac{1}{2^{3-m}(1+R^2)^{3/2}}\right)^\frac{1}{1-m},G_0\right\},\end{equation}
\begin{equation}\label{def-epsilon_0}
 0< \epsilon_0<\min\left\{\frac{G_0-\alpha}{R^2},\frac{\alpha}{2|m|R^2(1+R^2)},\frac{2\alpha}{2+R^2}\right\}.
\end{equation}
We claim that $v_\epsilon(x):=\epsilon \frac{|x|^2}{2}+\alpha$ is a subsolution to \eqref{resolvent-eps} for any $0<\epsilon<\epsilon_0$.
On one hand,
\begin{eqnarray}
\nonumber
\lefteqn{\dive\left((\epsilon+v_\epsilon)^m \frac{\nabla v_\epsilon}{|\nabla v_\epsilon|_\epsilon}+\epsilon \nabla v_\epsilon\right)}
\\ \nonumber &=& (\epsilon+v_\epsilon)^{m-1} \frac{m\epsilon |x|^2}{\sqrt{1+|x|^2}}+(\epsilon+v_\epsilon)^m\left(\frac{1}{(1+|x|^2)^{3/2}} +\frac{N-1}{\sqrt{1+|x|^2}}\right)+\epsilon^2N
\\ \nonumber
&>&(\epsilon+v_\epsilon)^{m-1}\left(\frac{m\epsilon |x|^2}{\sqrt{1+|x|^2}}+\frac{\alpha}{(1+|x|^2)^{3/2}}\right) \quad\mbox{(since $v_\eps>\alpha$)}
\\ \nonumber &>&\frac{(\epsilon+v_\epsilon)^{m-1}}{\sqrt{1+R^2}}\left(m\epsilon R^2+\frac{\alpha}{1+R^2}\right) \quad\mbox{(since $[0,+\infty)\ni s\mapsto \frac{s}{\sqrt{1+s}}$ increases)}
\\ &>& \frac{(\epsilon+v_\epsilon)^{m-1}\alpha}{2(1+R^2)^{3/2}} \quad\mbox{(since $\eps<\frac{\alpha}{2|m|R^2(1+R^2)}$)}.
\label{w2}
\end{eqnarray}
On the other hand,
\begin{equation}\label{w1}
v_\epsilon-f< \eps+v_\eps \le \eps+\alpha+\epsilon \frac{R^2}{2} <2\alpha \quad\mbox{(since $v_\eps<\eps+\frac{R^2}{2}$ and $\eps<\frac{2\alpha}{2+R^2}$).}
\end{equation}
Because of \eqref{w2} and \eqref{w1},
\begin{eqnarray}\label{comp-eps-bulk}
v_\epsilon-f<\dive\left((\epsilon+v_\epsilon)^m \frac{\nabla v_\epsilon}{|\nabla v_\epsilon|_\epsilon}+\epsilon \nabla v_\epsilon\right)
\end{eqnarray}
is implied by
$$
(2\alpha)^{2-m}<\frac{\alpha}{2(1+R^2)^{3/2}},
$$
which is true by \eqref{def-alpha}. The two additional constraints in \eqref{def-alpha} and \eqref{def-epsilon_0} guarantee that $v_\eps\le g$ on $\partial\Omega$. This, together with \eqref{comp-eps-bulk}, implies that $v_\eps\le u_\eps$ in $\Omega$: the argument is analogous, though simpler, to the one used in the proof of Theorem \ref{comp} below, and therefore we omit it.
\end{proof}

\begin{lemma}[Passage to the limit]\label{l-res-weak}
Assume $m<0$, \eqref{hp-f-resolvent}, and \eqref{hp-g-resolvent}. Then there exists $\alpha>0$ and a pair $(u,\w)\in BV(\Omega)\times L^\infty(\Omega;\R^N)$ such that $\|\w\|_\infty \leq 1$,
\begin{equation}\label{lg4}
0<\alpha\le u \le \max\{\|f\|_{L^{\infty}(\Omega)},\|g\|_{L^{\infty}({\partial\Omega})}\},
\end{equation} $\z=u^m\w$ verifies
\begin{equation}\label{lg9}
u-f=\dive \z \quad\mbox{in $\mathcal D'(\Omega)$},
\end{equation}
and
\begin{eqnarray}
  \label{ineq-z-F}|D\phi_F(u)|& \leq & (\z,DF(u))\quad\mbox{as measures}
\\
  \label{ineqboundary}|\phi_F(u)-\phi_F(g)| &\leq&  (F(g)-F(u))[\z,\nu]  \quad\mbox{$\mathcal H^{N-1}$-a.e. on $\partial\Omega$}
\end{eqnarray}
for any nondecreasing $F\in W^{1,\infty}_{loc}((0,+\infty))$, where $\phi_F$ is defined by \eqref{def-phi-F}. In particular,
\begin{eqnarray}
  \label{ineq-z-Id}|D\phi(u)|& \leq & (\z,Du)\quad\mbox{as measures}
\\
  \label{ineqboundary-Id}|\phi(u)-\phi(g)| &\leq&  (g-u)[\z,\nu]  \quad\mbox{$\mathcal H^{N-1}$-a.e. on $\partial\Omega$.}
\end{eqnarray}
\end{lemma}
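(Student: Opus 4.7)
The plan is to start from the approximating solutions $u_\eps$ supplied by Lemma \ref{exi-eps}, derive uniform $BV$ estimates, and identify the limit triple $(u,\w,\z)$. By Lemma \ref{posit} and \eqref{li}, for $\eps$ small enough one has $\alpha \le u_\eps \le M := \max\{\|f\|_\infty,\|g\|_\infty\}$, where $\alpha>0$ is the constant from Lemma \ref{posit}. Fix a nonnegative extension $\tilde g\in H^1(\Omega)$ of $g$, and for any nondecreasing $F\in W^{1,\infty}_{\mathrm{loc}}((0,+\infty))$ set $\phi_{F,\eps}(s) := \int_1^s F'(\sigma)(\eps+\sigma)^m\,d\sigma$. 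Testing \eqref{weak-eps} with $F(u_\eps)-F(\tilde g)\in H^1_0(\Omega)$ and using \eqref{lgp1} to bound the diffusion term from below by $|\nabla\phi_{F,\eps}(u_\eps)|-O(\eps)$, I would obtain uniform bounds on $\int_\Omega|\nabla\phi_{F,\eps}(u_\eps)|$ and on $\eps\int_\Omega|\nabla u_\eps|^2$. For $F=\mathrm{Id}$, since $(\eps+s)^m\ge M^m>0$ on $[\alpha,M]$, this yields a uniform $BV$ bound on $u_\eps$ itself.

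Up to subsequences, $u_\eps\to u$ strongly in $L^1(\Omega)$ and a.e., $\tfrac{\nabla u_\eps}{|\nabla u_\eps|_\eps}\overset{*}{\rightharpoonup}\w$ in $L^\infty(\Omega;\R^N)$ with $\|\w\|_\infty\le 1$, and $\eps\nabla u_\eps\to 0$ strongly in $L^2(\Omega)$. Since $(\eps+u_\eps)^m\to u^m$ boundedly and a.e., the field $\z_\eps := (\eps+u_\eps)^m\tfrac{\nabla u_\eps}{|\nabla u_\eps|_\eps}$ converges weakly-$*$ in $L^\infty$ to $\z:=u^m\w$, and passing to the limit in \eqref{weak-eps} against $\varphi\in C_c^\infty(\Omega)$ delivers \eqref{lg9}.

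For the measure inequality \eqref{ineq-z-F}, I would test \eqref{weak-eps} with $\varphi F(u_\eps)\in H^1_0(\Omega)$ for $\varphi\in C_c^\infty(\Omega)$, $\varphi\ge 0$. Combining \eqref{lgp1} with lower semicontinuity of the total variation applied to $\phi_{F,\eps}(u_\eps)\to\phi_F(u)$ in $L^1$ (noting that $\phi_{F,\eps}\to\phi_F$ uniformly on $[\alpha,M]$), and passing to the limit in the right-hand side of the tested equation via \eqref{defmeasx144}, one obtains $\int\varphi\,|D\phi_F(u)|\le\int\varphi\,d(\z,DF(u))$; density in $C_0(\Omega)$ then gives \eqref{ineq-z-F}, and \eqref{ineq-z-Id} is its specialization at $F=\mathrm{Id}$.

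Finally, for \eqref{ineqboundary} I would upgrade the above with test functions $\varphi\in C^\infty(\overline\Omega)$, $\varphi\ge 0$, using $\varphi(F(u_\eps)-F(\tilde g))\in H^1_0(\Omega)$. On the left-hand side, lower semicontinuity applied to the extension of $\phi_{F,\eps}(u_\eps)$ by $\phi_F(\tilde g)$ outside $\Omega$ contributes, besides $\int_\Omega\varphi\,|D\phi_F(u)|$, also the boundary term $\int_{\partial\Omega}\varphi\,|\phi_F(u)-\phi_F(g)|\,d\mathcal H^{N-1}$ coming from the mismatch between the $BV$-trace of $u$ and the datum $g$. On the right-hand side, the Green formula of Lemma \ref{lemmacaselles} applied to $\z\in X(\Omega)$ and $F(u)\in BV\cap L^\infty$, after the $\tilde g$ cancellations, produces the pairing $(\z,DF(u))$ together with the boundary term $\int_{\partial\Omega}\varphi(F(g)-F(u))[\z,\nu]\,d\mathcal H^{N-1}$. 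All in all,
\[
\int_{\partial\Omega}\varphi\bigl[|\phi_F(u)-\phi_F(g)|-(F(g)-F(u))[\z,\nu]\bigr]\,d\mathcal H^{N-1} \,\le\, \int_\Omega\varphi\,d\bigl[(\z,DF(u))-|D\phi_F(u)|\bigr].
\]
The right-hand side is a finite nonnegative Borel measure on $\Omega$ by \eqref{ineq-z-F}. Taking $\varphi$ supported in an $\eta$-tubular neighborhood of $\partial\Omega$ and approximating the indicator of an arbitrary closed $K\subset\partial\Omega$, then letting $\eta\to 0$, the right-hand side vanishes by inner regularity, which delivers the integrated inequality on every $K$ and hence \eqref{ineqboundary} pointwise $\mathcal H^{N-1}$-a.e. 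The main obstacle I expect is precisely this last step: converting the natural global inequality into the pointwise $\mathcal H^{N-1}$-a.e. statement, which requires careful bookkeeping of the extension-by-$\tilde g$ construction in the lower-semicontinuity step, together with the localization argument exploiting the finiteness of the positive measure $(\z,DF(u))-|D\phi_F(u)|$.
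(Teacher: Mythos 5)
Your proposal is correct and follows essentially the same route as the paper: a priori bounds from Lemma \ref{posit} and \eqref{li}, the energy estimate from testing with $u_\eps-\tilde g$, compactness and identification of $\z=u^m\w$, and then the test functions $\varphi(F(u_\eps)-F(\tilde g))$ together with lower semicontinuity to reach \eqref{ineq-z-F}--\eqref{ineqboundary}; the only difference is that where the paper invokes the relaxation result \cite[Theorem 1]{ACMM_ma10} (which already carries the boundary attachment term) and the arbitrariness of $\varphi$, you reprove this by extending by $\tilde g$ outside $\Omega$ and localizing near $\partial\Omega$, which is a standard and valid substitute. Just choose $\tilde g$ with values in $[G_0,\|g\|_{L^{\infty}(\partial\Omega)}]$ rather than merely nonnegative, since $F\in W^{1,\infty}_{loc}((0,+\infty))$ may blow up at $0$.
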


\begin{proof}
Up to \eqref{lg9}, the proof is rather standard.  Let $u_\eps$ be as in Lemma \ref{exi-eps}. Lemma \ref{posit} and \eqref{li} guarantee that {there exists $\alpha>0$ such that}
\begin{equation}
\label{lg1}
0<\alpha\le \ue\le \max\{\|f\|_{L^{\infty}(\Omega)},\|g\|_{L^{\infty}({\partial\Omega})}\}.
\end{equation}
We define
\begin{equation}
\label{def-z-eps}
\w_\eps:=\frac{\nabla u_\eps}{|\nabla u_\eps|_\eps}\,,\quad \tilde\z_\eps:=  (\vare +u_{\vare})^{m}\w_\eps\,,\quad \z_\eps:= \tilde\z_\eps+\eps \nabla \ue\,.
\end{equation}
Let $\tg\in H^1(\Omega;[G_0,\|g\|_{L^{\infty}({\partial\Omega})}])$ such that $\tg=g$ on $\partial \Omega$. We agree that $\int f\d \mu=\int_\Omega f\d \mu$ and that $\int f =\int f\d x$.  Choosing $\varphi=(u_{\eps}-\tg)$ in \eqref{weak-eps}, we obtain
\begin{eqnarray*}
\int (\ue-\tg)(\ue-f) &=& -\int   (\vare +u_{\vare})^{m}\frac{|\nabla \ue|^2}{|\nabla \ue|_\eps} - \eps\int |\nabla \ue|^2+\int \z_\eps\cdot \nabla\tilde g
\\
 & \stackrel{\eqref{lgp1}} \le& -\int   (\vare +u_{\vare})^{m}(|\nabla \ue|-\eps) - \eps \int |\nabla \ue|^2+\int \z_\eps\cdot \nabla\tilde g.
\end{eqnarray*}
In what follows, $C\ge 1$ denotes a generic constant independent of $\eps\in (0,1)$. In view of \eqref{lg1} we have
\begin{equation}
\label{lg1.5}
C^{-1}\le (\vare +u_{\vare})^{m} \le C
\end{equation}
and
$$
\left|\tilde\z_\eps\right| \stackrel{\eqref{def-z-eps}}\le   (\vare +u_{\vare})^{m} \stackrel{\eqref{lg1.5}}\le C.
$$
Hence
$$
\int \ue^2 +\int |\nabla \ue| + \eps \int |\nabla \ue|^2 \le C \int \left(\eps + |f(\ue-\tg)|+ \ue\tg  + |\nabla\tilde g|+\eps|\nabla u_\eps\cdot\nabla\tg|\right)
$$
and by H\"older and Cauchy-Schwarz inequalities
\begin{equation}\label{lg6}
\int \ue^2 +\int |\nabla \ue| + {\eps} \int |\nabla \ue|^2 \le C \left( 1 + \int f^2 +\int \tg^2
+
\int|\nabla\tg|^2\right).
\end{equation}
By \eqref{lg1} and \eqref{lg6}, along subsequences (not relabeled) we obtain the existence of $u\in BV(\Omega)\cap L^\infty(\Omega)$ and $\w\in L^\infty(\Omega;\rn)$ such that
\begin{eqnarray}
\nonumber
\ue \stackrel{*}\rightharpoonup u && \quad\mbox{in  $BV(\Omega)$ and in $L^\infty(\Omega)$}
\\ \label{lg3+5} \ue \to u && \quad\mbox{$\mathcal L^N$-a.e. and in $L^p(\Omega)$ for all $p<+\infty$}
\\ \label{lgz1} \eps\nabla \ue \rightarrow 0 && \quad\mbox{strongly in $L^2(\Omega;\R^N)$}
\\ \label{w} \w_\eps\stackrel{*}\rightharpoonup \w && \quad\mbox{ in $L^\infty(\Omega;\R^{N})$}
\\ \label{lgz2} \tilde \z_\eps \stackrel{*}\rightharpoonup \z= u^m\w && \quad\mbox{ in $L^\infty(\Omega;\R^{N})$.}
\end{eqnarray}
In addititon, \eqref{lg4} holds. The limits in \eqref{lgz1} and \eqref{lgz2} combine into
\begin{equation}\label{lgz}
\z_\eps\rightharpoonup \z \quad\mbox{ in $L^2(\Omega;\R^{N})$}.
\end{equation}
The bound in \eqref{lg4} follows from \eqref{lg1} and the identity in \eqref{lg9} follows from \eqref{resolvent-eps},  \eqref{lg3+5}, and \eqref{lgz}.

\smallskip

Let $F\in W^{1,\infty}_{loc}((0,+\infty))$ be nondecreasing and $\varphi\in C^\infty(\Omega)$ be nonnegative. Testing \eqref{resolvent-eps} by $\varphi(F(\ue)-F(\tg))$, after an integration by parts we obtain
$$\noindent{\int \varphi F'(\ue) \z_\eps\cdot\nabla \ue}
$$$$=  \int \varphi \z_\eps\cdot \nabla F(\tg) -\int\varphi(F(\ue)-F(\tg))(\ue-f) - \int (F(\ue)-F(\tg)) \z_\eps\cdot \nabla \varphi.$$

On the right-hand side we pass to the limit as $\eps\to 0$ using {\eqref{lg1}}, \eqref{lg3+5} and \eqref{lgz}:
 \begin{equation}\begin{array}{l}
 \lefteqn{\lim_{\eps\to 0} \int \varphi F'(\ue) \z_\eps\cdot\nabla \ue}
\\ \dys = \int \varphi \z\cdot \nabla F(\tg)\ -\int\varphi(F(u)-F(\tg))(u-f) - \int (F(u)-F(\tg)) \z\cdot \nabla \varphi
\\
\dys \stackrel{\eqref{lg9}}= \int \varphi \z\cdot \nabla F(\tg) -\int\varphi(F(u)-F(\tg))\dive\z
- \int (F(u)-F(\tg)) \z\cdot \nabla \varphi.\label{j1}
\end{array}\end{equation}
Note that, by \eqref{lg4}, $F(u)\in BV(\Omega)$. Integrating by parts on the right-hand side of \eqref{j1} and  using Lemma \ref{lemmacaselles}, we see that
\begin{equation}
\label{q1}
\lim_{\eps\to 0} \int \varphi F'(\ue) \z_\eps\cdot\nabla \ue  = \int\varphi\d (\z,DF(u))-\int_{\partial\Omega} \varphi(F(u)-F(g))[\z,\nu]\d\mathcal H^{N-1}.
\end{equation}
Since
$$
\begin{array}{l}
\dys \int \varphi F'(\ue) \z_\eps\cdot\nabla \ue
= \int \varphi F'(\ue)\left(  (\vare +u_{\vare})^{m}\frac{|\nabla \ue|^2}{|\nabla \ue|_\eps}+\eps |\nabla \ue|^2\right)
\\ \\ \dys \stackrel{\eqref{lgp1}}\ge  o_\eps(1) +\int \varphi F'(\ue)   u_{\vare}^{m} |\nabla \ue| \quad\mbox{as $\eps\to 0$},
\end{array}
$$
from \eqref{q1} and  \eqref{def-phi-F}  we derive
\begin{eqnarray*}
\lim_{\eps\to 0} \int \varphi |\nabla\phi_F(\ue)| \leq \int\varphi\d (\z,DF(u))-\int_{\partial\Omega} \varphi(F(u)-F(g))[\z,\nu]\d\mathcal H^{N-1}.
\end{eqnarray*}
Hence, by lower semi-continuity (\cite[Theorem 1]{ACMM_ma10})
\begin{eqnarray*}
\lefteqn{\int \varphi \d |D\phi_F(u)| + \int_{\partial\Omega} \varphi |\phi_F(u)-\phi_F(g)|\d \mathcal H^{N-1}}
\\\nonumber &\le&  \int\varphi\d (\z,DF(u))-\int_{\partial\Omega} \varphi(F(u)-F(g))[\z,\nu]\d\mathcal H^{N-1},
\end{eqnarray*}
which yields \eqref{ineq-z-F} and \eqref{ineqboundary} by the arbitrariness of $\varphi$.
\end{proof}

\begin{lemma}[Trace inequality] \label{lemma36}
Let $u,\w$ and $\z$ be as in Lemma \ref{l-res-weak}. Then $\w\in X_{\M}(\Omega)$,
\begin{equation}
\label{zwboundary}[\z,\nu]= u^m[\w,\nu]\quad\mbox{$\mathcal{H}^{N-1}$-a.e. on $\partial\Omega$,}
\end{equation}
\begin{equation}\label{d1}
u\leq g\quad\mbox{$\mathcal{H}^{N-1}$-a.e. on $\partial\Omega$,}
\end{equation}
and
\begin{equation}
\label{kk}
[\w,\nu]=1 \quad\mbox{$\mathcal{H}^{N-1}$-a.e. on $\partial\Omega\cap\{u<g\}$.}
\end{equation}
\end{lemma}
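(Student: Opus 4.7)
The plan is to treat the four claims in two groups: the first two follow from Anzellotti's calculus applied to $\z=u^m\w$, and the last two are extracted from the boundary inequality \eqref{ineqboundary} with two distinct choices of the nonlinearity $F$.

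\medskip

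\noindent\textbf{Step 1 ($\w\in X_\M(\Omega)$ and \eqref{zwboundary}).} Since $0<\alpha\le u\le\|u\|_\infty$ and $u\in BV(\Omega)$, the $BV$ chain rule gives $u^{-m}\in BV(\Omega)\cap L^\infty(\Omega)$. By Remark \ref{reg-zw}, $\z\in X(\Omega)\subset X_\M(\Omega)$, and the product rule \eqref{anzellotti-caselles} applied to the factorisation $\w=u^{-m}\z$ yields
$$
\dive\w=(u^{-m})^*\dive\z+(\z,D(u^{-m})),
$$
a bounded function plus a Radon measure, so $\w\in X_\M(\Omega)$. Then \eqref{zwboundary} is immediate from \eqref{cas-trace} applied to the pair $(u^m,\w)$: $[\z,\nu]=[u^m\w,\nu]=u^m[\w,\nu]$ $\mathcal H^{N-1}$-a.e.\ on $\partial\Omega$.

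\medskip

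\noindent\textbf{Step 2 ($u\le g$).} I would argue by contradiction. For $m<0$, $\phi'(s)=s^m>0$, so $\phi$ is strictly increasing on $(0,+\infty)$. If $\mathcal H^{N-1}(\{u>g\}\cap\partial\Omega)>0$, on that set $\phi(u)-\phi(g)>0$, and \eqref{ineqboundary-Id} forces $[\z,\nu]<0$ together with
$$
\phi(u)-\phi(g)\le(u-g)\,|[\z,\nu]|.
$$
By the mean value theorem there is $\xi\in(g,u)$ with $\phi(u)-\phi(g)=(u-g)\xi^m$, hence $\xi^m\le|[\z,\nu]|$. On the other hand, \eqref{zwboundary} and $\|\w\|_\infty\le 1$ give $|[\z,\nu]|\le u^m$. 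Since $m<0$ and $\xi<u$ imply $\xi^m>u^m$, we reach $u^m<u^m$, a contradiction.

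\medskip

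\noindent\textbf{Step 3 ($[\w,\nu]=1$ on $\{u<g\}$).} This is the delicate step: to recover the sharp slope one needs a whole family of nonlinearities. I would test \eqref{ineqboundary} with the capped functions $F_c(s):=\min(s,c)$ for $c\in\mathbb Q\cap(0,+\infty)$; each $F_c$ is nondecreasing and locally Lipschitz, and a direct computation (splitting the integral defining $\phi_{F_c}$ at $\sigma=c$) shows that at every boundary point with $u<c<g$,
$$
\phi_{F_c}(g)-\phi_{F_c}(u)=\phi(c)-\phi(u),\qquad F_c(g)-F_c(u)=c-u.
$$
Taking the union of the countably many null sets produced by \eqref{ineqboundary} as $c$ ranges over rationals, we obtain
$$
\phi(c)-\phi(u(x))\le(c-u(x))\,[\z,\nu](x)
$$
for $\mathcal H^{N-1}$-a.e.\ $x\in\{u<g\}$ and every rational $c\in(u(x),g(x))$. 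Letting $c\downarrow u(x)$ along rationals yields $u(x)^m=\phi'(u(x))\le[\z,\nu](x)$; by \eqref{zwboundary} and $u^m>0$ this reads $[\w,\nu](x)\ge 1$, and together with $|[\w,\nu]|\le\|\w\|_\infty\le 1$ gives equality. The main obstacle is precisely this differentiation with respect to the capping level $c$, together with the standard measure-theoretic care required to select a countable dense set of $c$'s.
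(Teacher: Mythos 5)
Your proposal is correct, and its first two steps are in substance the paper's own argument: the factorization $\w=u^{-m}\z$ together with \eqref{anzellotti-caselles} and \eqref{cas-trace} for $\w\in X_\M(\Omega)$ and \eqref{zwboundary}, and the inequality \eqref{ineqboundary-Id} combined with $|[\z,\nu]|\le u^m$ for \eqref{d1} (the paper exploits the strict monotonicity of $\phi'$ through the bound $|\phi(u)-\phi(g)|>|u-g|\min\{u^m,g^m\}$, whereas you invoke the mean value theorem; these are the same idea). Where you genuinely diverge is in the proof of \eqref{kk}: the paper tests \eqref{ineqboundary} with the one-parameter power family $F(s)=-s^{mp}/p$, so that $\phi_F(s)=\frac{1-s^{m(p+1)}}{p+1}$, manipulates the resulting inequality using $u\le g$ and the sign of $[\z,\nu]$ to get $[\w,\nu]\ge \frac{p}{p+1}$ on $\{u<g\}$, and then lets $p\to+\infty$; you instead test with the truncations $F_c(s)=\min(s,c)$, which converts \eqref{ineqboundary} into the difference-quotient bound $\frac{\phi(c)-\phi(u)}{c-u}\le[\z,\nu]$ for $u<c<g$, and you recover $[\z,\nu]\ge \phi'(u)=u^m$ by letting $c\downarrow u$ along rationals. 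Your route is somewhat more elementary: it avoids the algebraic step $(u^{mp}-g^{mp})[\z,\nu]=(u^{m(p+1)}-u^mg^{mp})[\z,\nu]/u^m\le(u^{m(p+1)}-g^{m(p+1)})[\z,\nu]/u^m$ (which requires knowing the sign of $[\z,\nu]$ on $\{u<g\}$ beforehand), and it makes explicit the countable-parameter/null-set bookkeeping that the paper leaves implicit in its limit $p\to+\infty$; the paper's choice, on the other hand, works with a single smooth family and needs no splitting of $\phi_{F}$ at the cap level. Both yield exactly $[\z,\nu]\ge u^m$, hence $[\w,\nu]=1$, on $\{u<g\}$, so the two proofs are interchangeable.
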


\begin{proof}
Arguing as in Remark \ref{reg-zw}, we see that both  $u^m$ and $u^{-m}$ belong to $\in BV(\Omega)\cap L^\infty(\Omega)$. Hence, using \eqref{anzellotti-caselles}, we have
$$
\dive \w=\dive(u^{-m}\z)=(u^{-m})^{*}\dive\z +(\z,Du^{-m}),
$$
so that $\w\in X_{\M}(\Omega)$ and  \eqref{zwboundary} follows from \eqref{cas-trace} (applied with $\z$ replaced by $\w$).

By \eqref{ineqboundary-Id}, we have
\begin{equation}\label{b1}
|\phi(u)-\phi(g)|\le (g-u)[\z,\nu] \stackrel{\eqref{zwboundary}}\le  u^m|g-u| \quad\mbox{$\mathcal H^{N-1}$-a.e. on $\partial\Omega$.}
\end{equation}
In particular, $(g-u)[\z,\nu]\ge 0$.
Since $\phi'(s)=s^{m}$ is strictly decreasing, $|\phi(u)-\phi(g)|>|u-g|\min\{ u^m,g^{m}\}$. Therefore,  \eqref{b1} implies that
$$
\min\{ u^m,g^{m}\}|g-u| < (g-u)[\z,\nu] \le  u^m|g-u| \quad\mbox{$\mathcal H^{N-1}$-a.e. on $\partial\Omega$}
$$
whenever $g\ne u$. Hence $g^{m}< u^m$ if $g\ne u$, which means that \eqref{d1} holds.

Let $p>0$. Choosing $F(s)=- \frac{s^{mp}}{p}$ in \eqref{ineqboundary}, {and, therefore,} $\phi_F(s)= \frac{1-s^{m(p+1)}}{p+1}$, then
\begin{equation}\label{boundaryphi'}\left|\frac{ u^{m(p+1)}-g^{m(p+1)}}{p+1}\right|\leq -\frac{g^{mp}-u^{mp}}{p}[\z,\nu]\ \ \text{$\mathcal{H}^{n-1}$-a.e. on}\ \ \partial\Omega\,.\end{equation}
Using the sign properties in \eqref{d1}, we obtain
\begin{eqnarray*}
\frac{p}{p+1}(u^{m(p+1)}-g^{m(p+1)})&\stackrel{\eqref{boundaryphi'}}\le& (u^{mp}-g^{mp})[\z,\nu] \\ \\ & = & (u^{m(p+1)}- u^mg^{mp})\frac{[\z,\nu]}{ u^m}
\\
&\le & ( u^{m(p+1)}-g^{m(p+1)})\frac{[\z,\nu]}{ u^m} \,,
\end{eqnarray*}
$\mathcal{H}^{N-1}$-a.e. on $\partial\Omega$. Therefore
\begin{equation}\label{d2}
\frac{p}{p+1} \le \frac{[\z,\nu]}{ u^m} \stackrel{\eqref{zwboundary}}= [\w,\nu] \mbox{$\mathcal{H}^{N-1}$-a.e.\  on}\  \partial\Omega\cap\{g>u\}.
\end{equation}
Passing to the limit as $p\to +\infty$, \eqref{d2} implies that $[\w,\nu]= 1$ when $u<g$, and \eqref{kk} follows.
\end{proof}

{

The existence part of Theorem \ref{exi} is an immediate consequence of the previous Lemmas.

\begin{proof}[Proof of Theorem \ref{exi} (Existence)]
The pair $(u,\w)$ in Lemma \ref{l-res-weak} has the desired regularity and satisfies \eqref{identify-w-new} and \eqref{sm9} (see \eqref{ineq-z-Id} and \eqref{lg9}). The boundary constraints \eqref{boundcond} follow from Lemma \ref{lemma36}.
\end{proof}

}

{

\subsection{Regularity}

We now prove the regularity part of Theorem \ref{exi}.

\begin{lemma}[{Regularity} of $u$ and identification of $(\w,Du)$] \label{cont}
Let $u$ be a solution to \eqref{pde} in the sense of Definition \ref{defi1}. Then $u\in DBV(\Omega)$ and \eqref{identify-w-plus} holds true.
\end{lemma}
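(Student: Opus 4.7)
The strategy is to first prove $u\in DBV(\Omega)$ and then derive \eqref{identify-w-plus} as an immediate consequence of Lemma~\ref{l-ms} combined with the $BV$ chain rule. Since $u\ge\alpha>0$ by the positivity built into Definition~\ref{defi1}, Lemma~\ref{tbvjump}(i) gives $S_u^*=S_u$; together with $\mathcal{H}^{N-1}(S_u\setminus J_u)=0$ (a standard property of $BV$ functions), the problem reduces to showing $\mathcal{H}^{N-1}(J_u)=0$. The key observation is that the $L^\infty$-regularity of $\dive\z=u-f$ forces the normal traces of $\z$ to coincide across $J_u$; combined with the strict monotonicity of $\sigma\mapsto\sigma^m$ for $m<0$ and $\|\w\|_\infty\le 1$, the inequality \eqref{identify-w-new} then cannot hold on any set of positive $\mathcal{H}^{N-1}$-measure.

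More precisely, $J_u$ is an oriented $\mathcal{H}^{N-1}$-rectifiable set, hence $\mathcal{L}^N$-negligible. Since $\dive\z\in L^\infty(\Omega)$ is absolutely continuous with respect to $\mathcal{L}^N$, Lemma~\ref{Ammbrdd} forces $[\z,\nu_u]^+=[\z,\nu_u]^-=:[\z]$ $\mathcal{H}^{N-1}$-a.e.\ on $J_u$. Applying the same lemma to the field $u\z\in X_\M(\Omega)$ (which is divergence-measure by \eqref{anzellotti-caselles}), together with Lemma~\ref{lemamb}, and using once more that $u^*\dive\z\ll\mathcal{L}^N$ does not charge $J_u$, identity \eqref{anzellotti-caselles} yields
\[
(\z,Du)\res J_u=\bigl(u^+[\z]^+-u^-[\z]^-\bigr)\mathcal{H}^{N-1}\res J_u=(u^+-u^-)[\z]\,\mathcal{H}^{N-1}\res J_u.
\]
Orienting $\nu_u$ so that $u^+>u^-$ on $J_u$, and noting that $\phi$ is strictly increasing, the restriction of \eqref{identify-w-new} to $J_u$ reads $\phi(u^+)-\phi(u^-)\le(u^+-u^-)[\z]$ $\mathcal{H}^{N-1}$-a.e. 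But $\sigma^m$ is strictly decreasing for $m<0$, so $\phi(u^+)-\phi(u^-)=\int_{u^-}^{u^+}\sigma^m\,d\sigma>(u^+-u^-)(u^+)^m$ whenever $u^+>u^-$, forcing $[\z]>(u^+)^m$. On the other hand, Lemma~\ref{lemamb} applied to $u^m$ and $\w$ gives $[\z]=(u^+)^m[\w,\nu_u]^+$ with $|[\w,\nu_u]^+|\le 1$, hence $[\z]\le(u^+)^m$: contradiction unless $\mathcal{H}^{N-1}(J_u)=0$. This proves $u\in DBV(\Omega)$.

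With $u,u^m\in DBV(\Omega)\cap L^\infty(\Omega)$, Lemma~\ref{l-ms}, identity \eqref{eq-ms}, gives $(\z,Du)=u^m(\w,Du)$ as measures. The bound $(\w,Du)\le\|\w\|_\infty|Du|\le|Du|$ (a standard consequence of Lemma~\ref{lemmacaselles}) and the $BV$ chain rule for $DBV$-functions, $|D\phi(u)|=\tilde u^m|Du|=u^m|Du|$, combine into
\[
u^m|Du|=|D\phi(u)|\le(\z,Du)=u^m(\w,Du)\le u^m|Du|,
\]
which forces equality throughout; dividing by $u^m>0$ yields \eqref{identify-w-plus}. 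The main obstacle is the first step: the inequality \eqref{identify-w-new} is not on its face strong enough to rule out jumps, and the argument works only because the $L^\infty$-regularity of $\dive\z$ produces the trace matching $[\z]^+=[\z]^-$, which upgrades the inequality into one that the strict monotonicity of $\sigma^m$ can contradict on any positive-measure subset of $J_u$.
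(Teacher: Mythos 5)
Your proposal is correct and follows essentially the same route as the paper's proof: trace matching of $[\z,\nu_u]^\pm$ across $J_u$ via Lemma \ref{Ammbrdd} (since $\dive\z\in L^\infty(\Omega)$), the trace factorization of Lemma \ref{lemamb} together with $\|\w\|_\infty\le 1$, and the strict monotonicity of $s\mapsto s^m$ to contradict \eqref{identify-w-new} on any positive-$\mathcal H^{N-1}$-measure portion of $J_u$, followed by the same chain-rule/\eqref{eq-ms} squeeze $u^m|Du|\le(\z,Du)=u^m(\w,Du)\le u^m|Du|$ for \eqref{identify-w-plus}. The only differences are cosmetic (you compute $(\z,Du)\res J_u$ as an exact identity and fix the orientation $u^+>u^-$, where the paper bounds $|D^j\phi(u)|$ by $\dive(u\z)\res J_u$ and uses the minimum of the two one-sided traces).
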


}

\begin{proof} Arguing as in Remark \ref{reg-zw}, we see that $u^m\in BV(\Omega)\cap L^\infty(\Omega)$.
By \cite[Proposition 3.69]{AFPBook}, $J_{u^m}=J_{\phi(u)}=J_u$ and {$\nu_{u^m}=-\nu_u$}. Since $\z\in X (\Omega)$, Lemma \ref{Ammbrdd} implies that
$$
0=(\dive \z)\res J_u=([\z,\nu_u]^+-[\z,\nu_u]^-)\haus\res J_u,
$$
hence
\begin{equation}
\label{fg}
{\Psi:}=[\z,\nu_u]^+=[\z,\nu_u]^-\quad\mbox{$\haus$-a.e. on $J_{u}$}.
\end{equation}
Applying Lemma \ref{lemamb} with $u=u^m$ yields
\begin{equation}\label{e2}
\left|[\z,\nu_{u^m}]^\pm\right| = \left| (u^m)^{\pm}[\w,\nu_u]^\mp\right|\leq {(u^m)^{\pm}}\quad \quad\mbox{$\haus$-a.e. on $J_{u}$}.
\end{equation}
Therefore
\begin{equation}\label{ineqbdrjump}
\left|{\Psi}\right|\stackrel{\eqref{fg},\eqref{e2}} \leq \min\{ (u^{m})^{+}, (u^{m})^{-}\}{=\min\{\phi'(u^+),\phi'(u^-)\}}
\end{equation}
$\haus$-a.e. on $J_u$. On the other hand,
$$
|D\phi(u)|\stackrel{{\eqref{identify-w-new}}}\le (\z,Du)\stackrel{\eqref{anzellotti-caselles}}= -u^*\dive \z+\dive(u\z)\quad {\rm as \ measures. }
$$
Using again that $\z\in X(\Omega)$, this yields \begin{equation}\label{sm} |D^j\phi(u)|\leq  (\dive (u\z))\res_{J_u}.\end{equation}
Applying once more Lemmas \ref{Ammbrdd} and \ref{lemamb}, we obtain from \eqref{sm}:
\begin{eqnarray}
\nonumber
{|\phi(u^+)-\phi(u^{-})| \haus\res J_u} & \stackrel{\eqref{sm}} \leq & \dive(u\z)\haus\res {J_{u}}
 \\ \nonumber
 &=& ([u\z,\nu_{u}]^+-[u\z,\nu_{u}]^-)\haus\res J_{u}
 \\\nonumber &=& {(u^+[\z,\nu_u]^+-u^-[\z,\nu_{u}]^-)\haus\res J_{u}}
 \\ \nonumber
 &{\stackrel{\eqref{fg}}=}& (u^+-u^-){\Psi}\haus\res J_{u}
 \\ \nonumber
 &{\stackrel{\eqref{ineqbdrjump}}\le} & {|u^+-u^-| \min\{\phi'(u^+),\phi'(u^-)\} \haus\res J_{u} }.
\end{eqnarray}
Since $\phi'(s)=s^{m}$ is strictly monotone, we conclude that $\haus(J_u)=0$, hence $u\in DBV(\Omega)$. Consequently, by the chain rule for $BV$-functions,
$$
{u^m |\tilde Du| = |D\phi(u)| {\stackrel{\eqref{identify-w-new}}\le} (\z,Du) \stackrel{\eqref{eq-ms}} = u^m (\w, Du) \le u^m|\tilde Du|}
$$
as measures (recall that  $\tilde D u$  denotes the diffuse part of the gradient of $u$). Therefore $(\z,Du)=|D\phi(u)|$ and $(\w,Du)=|Du|$.
\end{proof}

\subsection{Comparison and Uniqueness}

We have the following contraction principle for solutions to \eqref{pde}.
\begin{theorem}\label{comp}
Assume {$m<0$. Let $f,\overline f$ and $g,\overline{g}$ such that \eqref{hp-f-resolvent}, resp. \eqref{hp-g-resolvent}, hold.}  Let  $u$ and $\overline{u}$ be two solutions of problem {\eqref{pde}} with data $(f,g)$, resp. $(\overline f, \overline g)$.
{If $g\le \overline g$, then}
$$
\into (u-\ou)^{+}\ dx\leq \into (f-\overline{f})^{+}\ dx\,.
$$
In particular, the uniqueness part of Theorem \ref{exi} holds true.
\end{theorem}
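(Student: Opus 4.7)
I would use a Kato-type argument. Subtracting the two resolvent equations gives
$$
(u-\overline{u})-(f-\overline{f})=\dive(\z-\overline{\z})\quad\text{in }\mathcal D'(\Omega),
$$
and the strategy is to test this against a Lipschitz nondecreasing regularization of $\sign^+(u-\overline{u})$ and then pass to the limit. A convenient choice is $\eta_a:=a^{-1}T_a((u-\overline{u})^+)\in BV(\Omega)\cap L^\infty(\Omega)$ with $a>0$, obtained as an admissible composition with $w:=u-\overline{u}\in DBV(\Omega)$ (both $u$ and $\overline{u}$ belong to $DBV(\Omega)$ by the regularity part of Theorem \ref{exi}). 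The Green formula of Lemma \ref{lemmacaselles}, applied to $\z-\overline{\z}\in X(\Omega)$ (cf.\ Remark \ref{reg-zw}) and to $\eta_a$, yields
$$
\int_\Omega \eta_a\bigl[(u-\overline{u})-(f-\overline{f})\bigr]\,dx+(\z-\overline{\z},D\eta_a)(\Omega)=\int_{\partial\Omega}[\z-\overline{\z},\nu]\,\eta_a\,d\mathcal H^{N-1}.
$$
The conclusion will follow once I show that, as $a\to 0^+$, the interior pairing is $\ge -o_a(1)$ and the boundary integral is $\le 0$.

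The interior estimate is the core of the proof. By the chain rule \eqref{composition2}, $(\z-\overline{\z},D\eta_a)=\eta_a'(\widetilde{w})(\z-\overline{\z},Dw)$ as measures. Using the algebraic splitting $\z-\overline{\z}=\overline{u}^{m}(\w-\overline{\w})+(u^{m}-\overline{u}^{m})\w$ together with Lemma \ref{l-ms}\,\eqref{eq-ms}, I decompose
$$
(\z-\overline{\z},Dw)=\overline{u}^{m}\,(\w-\overline{\w},Dw)+(u^{m}-\overline{u}^{m})(\w,Dw).
$$
The first summand is a nonnegative measure: the identifications $(\w,Du)=|Du|$, $(\overline{\w},D\overline{u})=|D\overline{u}|$ from \eqref{identify-w-plus}, combined with the Cauchy--Schwarz bounds $|(\w,D\overline{u})|\le |D\overline{u}|$, $|(\overline{\w},Du)|\le |Du|$ (from $\|\w\|_\infty,\|\overline{\w}\|_\infty\le 1$), give $(\w-\overline{\w},D(u-\overline{u}))\ge 0$, while $\overline{u}^{m}>0$. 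The second summand is $o_a(1)$ in total mass: since $1/u,1/\overline{u}\in L^\infty(\Omega)$, the map $s\mapsto s^m$ is Lipschitz on the common range, so $|u^{m}-\overline{u}^{m}|\le C|w|\le Ca$ on $\supp\eta_a'=\{0<\widetilde{w}<a\}$; together with $|(\w,Dw)|\le |Dw|$, this bounds the total mass of the second summand by $C\,|Dw|(\{0<\widetilde{w}<a\})$, which tends to $0$ by $\sigma$-additivity of the finite measure $|Dw|$ on the family $\{0<\widetilde{w}<a\}\searrow\emptyset$.

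The boundary integral is handled via the hypothesis $g\le\overline{g}$. On $\partial\Omega$, \eqref{boundcondu<g} gives $u\le g$ and $\overline{u}\le\overline{g}$, so on $\{u>\overline{u}\}\cap\partial\Omega$ one has $\overline{u}<u\le g\le\overline{g}$, hence $\overline{u}<\overline{g}$; then \eqref{boundcond=} applied to $\overline{u}$ gives $[\overline{\z},\nu]=\overline{u}^{m}$. On the other hand, \eqref{zwboundary} and $\|\w\|_\infty\le 1$ always yield $[\z,\nu]\le u^{m}$; therefore $[\z-\overline{\z},\nu]\le u^{m}-\overline{u}^{m}<0$ on $\{u>\overline{u}\}\cap\partial\Omega$ (here I use $m<0$). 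Since $\eta_a\ge 0$ is supported on this set, the boundary integral is $\le 0$.

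Putting everything together,
$$
\int_\Omega \eta_a(u-\overline{u})\,dx\le \int_\Omega \eta_a(f-\overline{f})\,dx+o_a(1);
$$
letting $a\to 0^+$ and applying dominated convergence with $\eta_a\to\chi_{\{u>\overline{u}\}}$ pointwise, I obtain $\int_\Omega(u-\overline{u})^+\,dx\le\int_\Omega(f-\overline{f})^+\,dx$. Uniqueness then follows at once by swapping the roles of $u$ and $\overline{u}$. The main obstacle is the interior estimate: since the prefactor $u^m$ in $\z$ renders the direct pairing $(\z-\overline{\z},D(u-\overline{u}))$ not sign-definite, the additive decomposition above—isolating a TV-monotone contribution plus an $O(|w|)$ remainder concentrated on a set shrinking to $\{w=0\}$—is the essential ingredient that reduces the contraction to a vanishing-error version of the classical $m=0$ argument.
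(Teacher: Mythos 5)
Your proposal is correct and follows essentially the same route as the paper's proof: testing with a (normalized) truncation of $(u-\overline u)^+$, applying the Green formula, splitting $(\z-\overline\z,D(u-\overline u))$ into the nonnegative part $\overline u^{\,m}(\w-\overline\w,D(u-\overline u))$ (via \eqref{identify-w-plus} from Lemma \ref{cont}) plus a Lipschitz remainder of size $O(a)$ on the slab $\{0<u-\overline u<a\}$, and killing the boundary term through $u\le g\le\overline g$, \eqref{boundcond=} for $\overline u$ and $[\z,\nu]\le u^m$. The only cosmetic differences are that the paper divides by $\eps$ at the end rather than normalizing the test function, controls the slab term by the coarea formula instead of continuity from above of $|D(u-\overline u)|$, and derives $[\z,\nu]\le u^m$ directly from \eqref{cas-trace} rather than quoting \eqref{zwboundary}.
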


\begin{proof}
Let $\w$ and $\overline\w$ be the gradient-director fields associated to $u$, resp. $\overline u$, and let $\z=u^m\w$, $\overline z={\overline u}^m\overline \w$. We know that
\begin{equation}\label{uElliptictproblem1}  u  - f= \dive \z \quad\mbox{and}\quad  \overline{u} - \overline f =  \dive \ \overline \z \quad\mbox{in $L^\infty(\Omega)$}.
\end{equation}
{We also know, by Lemma \ref{cont}, that \eqref{identify-w-plus} holds for both. Hence}
\begin{equation}\label{monotonicity}
(\w-\overline \w, Du-D\overline u) \stackrel{{\eqref{identify-w-plus}}}= |Du|+|D\overline u|  - (\w, D\overline u)-(\overline\w, Du) \ge 0 \quad\mbox{as measures,}
\end{equation}
since $\|\w\|_\infty\le 1$ and $\|\overline \w\|_\infty\le 1$. Multiplying the equations in (\ref{uElliptictproblem1}) by $T_{\epsilon}(u -\overline{u})^+$, applying \eqref{Green}, and subtracting the two equalities we obtain
\begin{eqnarray}
\nonumber
\lefteqn{\int_{\Omega} (u-\overline{u}+\overline f-f) T_\epsilon(u-\overline{u})^+\d x}
\\ \label{E1ENTROP} &=& -\into \d (\z-\overline{\z}, D T_{\vare}(u-\ou)^{+}) + \int_{\partial\Omega}[\z-\overline{\z},\nu]T_{\vare}(u-\ou)^{+}\d\mathcal{H}^{N-1}.
\end{eqnarray}
Let us consider the first term on the right hand side of (\ref{E1ENTROP}). Using the fact that the measure $D(u-\overline u)$ is diffuse, we obtain
\begin{eqnarray}
\nonumber \lefteqn{-\into \d(\z-\overline{\z}, D T_{\vare}(u-\ou)^{+})}
\\ \nonumber &=& -\int_{\{0<u-\ou<\vare\}} \theta (\z-\overline{\z}, D T_{\vare}(u-\ou)^{+})\d |D T_{\vare}(u-\ou)^{+}|
\\ \nonumber
&\stackrel{\eqref{composition}}{=}&-\int_{\{0<u-\ou<\vare\}}\theta (\z-\overline{\z}, D (u-\ou))\d |D (u-\ou)|
\\ \nonumber &=& -\int_{\{0<u-\ou<\vare\}}  \d(\z-\overline{\z}, D (u-\ou))
\\ \nonumber &\stackrel{\eqref{eq-ms}}=& -\int_{\{0<u-\ou<\vare\}}  (u^{m} -\ou^{m})\d(\w, D (u-\ou)) \\\nonumber & & -  \int_{\{0<u-\ou<\vare\}}  \ou^{m}\d(\w-\overline{\w}, D (u-\ou))
\\ \label{p1} &\stackrel{\eqref{monotonicity}}\leq &  - \int_{\{0<u-\ou<\vare\}}  (u^{m}-\ou^{m})\d(\w, D (u-\ou)).
\end{eqnarray}
Since $u,\overline u$ are bounded above and below and the mapping $s\mapsto s^{m}$ is locally Lipschitz, a positive constant $C$, independent of $\eps$, exists such that $|u^m-{\overline u}^m|\le C|u-\ou|$. Using also $\|\w\|_\infty\le 1$ and the fact the measure $D(u-\overline u)$ is diffuse, we see that
\begin{eqnarray}
\nonumber
\left| \int_{\{0<u-\ou<\vare\}}  (u^{m}-\ou^{m})\d(\w, D (u-\ou))\right|
&\leq& C\vare \int_{\{0<u-\ou<\vare\}} \d| D (u-\ou)|
\\ \label{p2} & = & C\eps \int_\Omega \d | D T_\eps(u-\ou)^{+}|.
\end{eqnarray}
By the coarea formula \cite[Theorem 3.40]{AFPBook}, we get
\begin{eqnarray}\nonumber
\int_\Omega \d | D T_\eps(u-\ou)^{+}| &=& \int_{-\infty}^{+\infty} P(\{T_\eps(u-\ou)^{+}>\lambda\})\d \lambda
\\\label{p3} &=& \int_0^\eps P(\{u-\ou>\lambda \})\d \lambda =o_\eps(1) \quad\mbox{as $\eps\to 0$,}
\end{eqnarray}
since $\lambda\mapsto P(\{u-\ou >\lambda \})$ is integrable on $\R$. Inserting \eqref{p1}, \eqref{p2}, and \eqref{p3} into (\ref{E1ENTROP}), dividing  by $\epsilon$, and letting $\epsilon\to 0$, we obtain
$$
\int_{\Omega} (u-\overline{u}+\overline f-f ) {\rm sign}_0^+(u-\overline{u}) \d x \leq \int_{\partial\Omega}([\z, \nu]-[\overline{\z},\nu]){\rm sign}_0^+(u-\overline{u})\d\mathcal{H}^{N-1},
$$
where
$$
\displaystyle {\rm sign}_0^+(r)=\left\{\begin{array}
  {ll} 1 & \quad \mbox{if } r>0 \\ 0 &\quad \mbox{if } r\leq 0.
\end{array}\right.
$$
Since $u \leq g \leq \overline g$ in $\partial\Omega$,
$$
\{ x \in \partial \Omega \ : \ u(x) > \overline{u} (x) \} \subseteq
\{ x \in \partial \Omega \ : \ \overline{g}(x) > \ou (x) \}.
$$
By \eqref{cas-trace} and \eqref{boundcond=}, $[\z,\nu]\leq u^{m}$ and  $[\overline\z,\nu]{=  } \overline{u}^{m}$  $\haus$-a.e. on $\partial\Omega\cap\{u>\overline u\}$. Therefore
$$
\int_{\partial\Omega} \left([\z,\nu]-[\overline \z, \nu] \right) {\rm
sign}_0^+(u-\overline{u})\d\mathcal H^{N-1} $$$$\leq \int_{\partial\Omega} \left(u^{m}-\overline{u}^{m} \right){\rm
sign}_0^+(u-\overline{u})\d\mathcal H^{N-1}  \leq  0\,,
$$
and we conclude that
$$
\int_{\Omega} (u-\overline{u})^{+} \d x \leq  \int_\Omega \left((f-\overline f)^{+}-(f-\overline f)^-\right){\rm
sign}_0^+(u-\overline u) \d x \le \int_\Omega (f-\overline f)^+\d x\,.
$$
\end{proof}

\section{The degenerate case}\label{4}

In this section we analyze the degenerate case of Problem (\ref{pde}), $m>0$. As we already  mentioned,  in contrast with the singular case, for $m>0$ it is natural to allow the data (hence, the solution) to become zero. This reflects into some technical complications in the proofs of both existence and uniqueness, since a priori bounds only guarantee that $T_a^\infty(u)\in BV(\Omega)$ for any $a>0$. Therefore, we will need  some further properties of the space $TBV^{+}$, which are proved in the next subsection.

\subsection{Properties of the space $TBV^{+}(\Omega)$}\label{5.1}

First of all, we argue that
the trace of functions in $TBV^+(\Omega)$ is well defined.

\begin{lemma}\label{L-tbv-trace}
Let $\Omega$ be a bounded open set with Lipschitz boundary and $u\in TBV^+(\Omega)$. Then there exists $u^\Omega\in L^1(\partial\Omega;[0,+\infty))$ such that
\begin{equation}\label{indeed-trace}
\lim_{\rho\to 0} \fint_{\Omega \cap B_\rho(x_0)} |u(x)-u^\Omega(x_0)|\d x =0 \quad\mbox{for $\mathcal H^{N-1}$-a.e. $x_0\in \partial \Omega$.}
\end{equation}
Moreover,
\begin{equation}\label{def-trace}
u^\Omega= \lim_{a\to 0^+} (T^\infty_a(u))^\Omega \quad\mbox{$\mathcal H^{N-1}$-a.e.  in $\partial \Omega$}
\end{equation}
and
\begin{equation}\label{noproblem-trace}
F(u^\Omega) = (F(u))^\Omega \quad\mbox{for all $F\in W^{1,\infty}_{a}$}
\end{equation}
(see \eqref{def-WW} for the definition of $W^{1,\infty}_{a}$).
\end{lemma}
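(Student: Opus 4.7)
The plan is to define $u^\Omega$ as the pointwise limit of the $BV$-traces $(T_a^\infty(u))^\Omega$ as $a\downarrow 0^+$. First I would note that for every $a>0$ the function $T_a^\infty(u)=\max(u,a)$ belongs to $BV(\Omega)\cap L^1(\Omega)$ by the very definition of $TBV^+(\Omega)$ (together with $T_a^\infty(u)\le a+u$), hence admits a classical trace $(T_a^\infty(u))^\Omega\in L^1(\partial\Omega;[a,+\infty))$ satisfying the blowup characterization at $\mathcal H^{N-1}$-a.e.\ $x_0\in\partial\Omega$. Since $a\mapsto T_a^\infty(u)$ is monotone decreasing as $a\downarrow 0^+$ (as $u\ge 0$) and blowups preserve pointwise inequalities, the family $(T_a^\infty(u))^\Omega$ is itself monotone decreasing in $a$, $\mathcal H^{N-1}$-a.e. on $\partial\Omega$. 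Setting
$$
u^\Omega(x_0):=\lim_{a\downarrow 0^+}(T_a^\infty(u))^\Omega(x_0),
$$
the limit exists in $[0,+\infty]$ $\mathcal H^{N-1}$-a.e.; the domination $0\le u^\Omega\le (T_1^\infty(u))^\Omega\in L^1(\partial\Omega)$ then gives $u^\Omega\in L^1(\partial\Omega;[0,+\infty))$ and \eqref{def-trace}.

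For \eqref{indeed-trace}, I would fix a $\mathcal H^{N-1}$-null set $N\subset\partial\Omega$ outside of which the $BV$-blowup characterization holds for $T_a^\infty(u)$ for every rational $a\in(0,1)$ and $(T_a^\infty(u))^\Omega(x_0)\to u^\Omega(x_0)<+\infty$. For any such $x_0$, the key estimate is the triangle inequality combined with the pointwise bound $|u-T_a^\infty(u)|\le a$ (valid since $u\ge 0$):
\begin{align*}
\fint_{\Omega\cap B_\rho(x_0)}|u-u^\Omega(x_0)|\,dx &\le a+\fint_{\Omega\cap B_\rho(x_0)}|T_a^\infty(u)-(T_a^\infty(u))^\Omega(x_0)|\,dx \\
&\quad +|(T_a^\infty(u))^\Omega(x_0)-u^\Omega(x_0)|.
\end{align*}
Taking $\limsup_{\rho\to 0^+}$ kills the middle term by the blowup property of $T_a^\infty(u)$; then letting $a\downarrow 0^+$ along the rationals makes both remaining terms vanish, which is \eqref{indeed-trace}.

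Finally, for \eqref{noproblem-trace}, given $F\in W^{1,\infty}_a$ we have $F(u)\in BV(\Omega)$ by the definition of $TBV^+(\Omega)$, so $(F(u))^\Omega\in L^1(\partial\Omega)$ satisfies the blowup characterization at $\mathcal H^{N-1}$-a.e.\ $x_0$. At any such $x_0$ where \eqref{indeed-trace} also holds, the Lipschitz bound on $F$ yields
$$
\fint_{\Omega\cap B_\rho(x_0)}|F(u)-F(u^\Omega(x_0))|\,dx\le \mathrm{Lip}(F)\fint_{\Omega\cap B_\rho(x_0)}|u-u^\Omega(x_0)|\,dx\longrightarrow 0
$$
as $\rho\downarrow 0^+$, thereby identifying $F(u^\Omega(x_0))$ as a trace value of $F(u)$ at $x_0$; uniqueness of the $BV$-trace then gives \eqref{noproblem-trace}.

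The only genuinely delicate point is to guarantee simultaneously the existence, finiteness, and correct blowup behavior of the limit defining $u^\Omega$; this is achieved by combining the monotonicity of $a\mapsto T_a^\infty(u)$, the uniform pointwise proximity $|u-T_a^\infty(u)|\le a$, and the $L^1$-domination by $(T_1^\infty(u))^\Omega$.
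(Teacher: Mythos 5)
Your proof is correct and follows essentially the same route as the paper: define $u^\Omega$ as the monotone pointwise limit of the $BV$-traces $(T_a^\infty(u))^\Omega$, prove \eqref{indeed-trace} via the triangle inequality together with $|u-T_a^\infty(u)|\le a$ and the blowup property of the truncated traces, and deduce \eqref{noproblem-trace} from the Lipschitz bound on $F$ and uniqueness of the blowup trace. The only nitpick is the phrase ``monotone decreasing in $a$'': the traces are increasing in $a$ (hence decreasing as $a\downarrow 0^+$), which is clearly what you use, so this is a wording slip rather than a gap.
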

\begin{proof}
Since $u\in TBV^+(\Omega)$, we have $(T^\infty_a(u))^\Omega\in L^1(\partial\Omega)$ for all $a>0$. Of course $0 \leq  (T^\infty_{a'}(u))^\Omega \le (T^\infty_{a''}(u))^\Omega$ for $0<a'<a''$. Hence, by monotone convergence, the point-wise limit $u^\Omega(x)$ in \eqref{def-trace} exists a.e. in $\partial\Omega$ and $u^\Omega \in L^1(\partial\Omega;[0,+\infty))$.  For a.e. $x_0\in \partial\Omega$, we have
$$
\fint_{\Omega \cap B_\rho(x_0)} |u(x)-u^\Omega(x_0)|\d x \leq
\lim_{\rho\to 0} \fint_{\Omega \cap B_\rho(x_0)} |u(x)-T_a^\infty (u(x))|\d x
$$$$
+\lim_{\rho\to 0} \fint_{\Omega \cap B_\rho(x_0)} |T_a^\infty(u(x))-(T_a^\infty (u))^\Omega(x_0)|\d x
$$$$+\lim_{\rho\to 0} \fint_{\Omega \cap B_\rho(x_0)} |(T_a^\infty (u))^\Omega(x_0)-u^\Omega(x_0)|\d x.
$$
Noting that $|u(x)-T_a^\infty (u(x))|= (a-u)\chi_{\{u<a\}}<a$ and recalling \eqref{def-trace}, for any $\eps>0$ we may find $a>0$ such that
\begin{eqnarray*}
\lefteqn{\limsup_{\rho\to 0} \fint_{\Omega \cap B_\rho(x_0)} |u(x)-u^\Omega(x_0)|\d x}
\\ &\le &
\eps +\limsup_{\rho\to 0} \fint_{\Omega \cap B_\rho(x_0)} |T_a^\infty(u(x))-(T_a^\infty (u))^\Omega(x_0)|\d x,
\end{eqnarray*}
hence \eqref{indeed-trace} follows from the arbitrariness of $\eps$ and the definition of trace of $T_a^\infty(u)$. In order to prove \eqref{noproblem-trace}, for $x_0\in \partial \Omega$ we write
\begin{eqnarray*}
|(F(u))^\Omega(x_0)-F(u^\Omega(x_0))| &=& \lim_{\rho\to 0} \fint_{\Omega \cap B_\rho(x_0)}  |(F(u))^\Omega(x_0)-F(u^\Omega(x_0))|\d x
\\ &\le & \lim_{\rho\to 0} \fint_{\Omega \cap B_\rho(x_0)}  |(F(u))^\Omega(x_0)-F(u(x))|\d x
\\ && + \lim_{\rho\to 0} \fint_{\Omega \cap B_\rho(x_0)}  |F(u(x))-F(u^\Omega(x_0))| \d x
\\
& \le & L \lim_{\rho\to 0} \fint_{\Omega \cap B_\rho(x_0)}  |u(x)- u^\Omega(x_0)| \d x
\end{eqnarray*}
and the limit is zero because of \eqref{indeed-trace}.
\end{proof}

In view of \eqref{noproblem-trace}, hereafter we will omit the superscript $\Omega$. The next result is  a version of Lemma \ref{lemamb} for $TBV^+$-functions:

\begin{lemma}\label{lematbvjump}
  Let $u\in TBV^+(\Omega)\cap L^\infty(\Omega)$, $\w\in L^\infty(\Omega; \rn)$ and $\z=u^m\w\in X(\Omega)$. Then
  \begin{itemize}
  \item[(i)] For almost {every $0<a<b\le +\infty$}, $\w\chi_{\{a<u<b\}}\in X_{\mathcal M}(\Omega)$ and \begin{equation}\label{zwboundaryd}[\z,\nu]\chi_{\{{a<u<b}\}}= T_a^b(u)^{m}[\w\chi_{\{{a<u<b}\}},\nu]\ \ \mathcal{H}^{N-1}\text{-a.e. on}\ \  \partial\Omega\,, \end{equation}
  \begin{equation}
  \label{zwboundarydjump}
 {[\z,\nu_{{T_a^b(u)}}]^\pm\chi_{\{{a<u<b}\}}
    }
  = (T_a^b(u)^{m})^\pm[\w\chi_{\{{a<u<b}\}},\nu_{{T_a^b(u)}}]^\pm\,,
  \end{equation}$\mathcal{H}^{N-1}\text{-a.e. on}\ \  {J_{T_a^b(u)}}$\,;
    \item[(ii)]
  {
  \begin{equation}\label{ggf}
  \displaystyle\haus\{x\in \partial\Omega : u(x)=0, [\z,\nu](x)\neq 0\}=0,
  \end{equation}
  }
   \end{itemize}
\end{lemma}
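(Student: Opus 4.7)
The plan for (i) is to factor $\w\chi_{\{a<u<b\}}$ as $h\z$ with $h:=\chi_{\{a<u<b\}}(T_a^b(u))^{-m}$, and then apply Lemmas \ref{lemmacaselles} and \ref{lemamb} together with the trace identification of Lemma \ref{L-tbv-trace}. Since $T_a^b(u)\in[a,b]$ with $a>0$ and $s\mapsto s^{-m}$ is Lipschitz on $[a,b]$, $(T_a^b(u))^{-m}\in BV(\Omega)\cap L^\infty(\Omega)$. By the coarea formula applied to $T_a^\infty(u)\in BV(\Omega)$ and to $T_0^b(u)\in BV(\Omega)$, for a.e. $0<a<b\le+\infty$ the sets $\{u>a\}$ and $\{u<b\}$ have finite perimeter, hence $\chi_{\{a<u<b\}}\in BV(\Omega)\cap L^\infty(\Omega)$. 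Since the product of two bounded $BV$-functions is $BV$, $h\in BV(\Omega)\cap L^\infty(\Omega)$, and a pointwise check gives $\w\chi_{\{a<u<b\}}=h\z$. Lemma \ref{lemmacaselles} then yields $\w\chi_{\{a<u<b\}}\in X_{\mathcal M}(\Omega)$.

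To establish \eqref{zwboundaryd} I would apply \eqref{cas-trace} to $h\z$, obtaining $[\w\chi_{\{a<u<b\}},\nu]=h^\Omega[\z,\nu]$ $\mathcal H^{N-1}$-a.e. on $\partial\Omega$. The factor $(T_a^b(u))^{-m}$ has trace $(T_a^b(u^\Omega))^{-m}$ by \eqref{noproblem-trace}. For the trace of $\chi_{\{a<u<b\}}$, observe that the pushforward of $\mathcal H^{N-1}\res\partial\Omega$ by $u^\Omega$ is a finite Borel measure on $\mathbb R$ and hence has at most countably many atoms; for $a,b$ outside this exceptional set, $\mathcal H^{N-1}(\{u^\Omega\in\{a,b\}\})=0$, and combining Lemma \ref{L-tbv-trace} with the density characterization of the $BV$-trace yields $(\chi_{\{a<u<b\}})^\Omega=\chi_{\{a<u^\Omega<b\}}$. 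Multiplication by $T_a^b(u)^m$ then produces \eqref{zwboundaryd}.

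For the jump identity \eqref{zwboundarydjump}, I would apply Lemma \ref{lemamb} to the bounded $BV$-function $T_a^b(u)^m$ and the $X_{\mathcal M}$-field $\w\chi_{\{a<u<b\}}$, using that $T_a^b(u)^m\w\chi_{\{a<u<b\}}=\z\chi_{\{a<u<b\}}$ and $J_{T_a^b(u)^m}=J_{T_a^b(u)}$ (with normals identified up to the sign of $m$). A density argument analogous to the boundary case shows that on $J_{T_a^b(u)}$, $\chi_{\{a<u<b\}}$ has unit approximate limit on either side precisely at points where $T_a^b(u)^\pm\in(a,b)$; at such points Lemma \ref{lemamb} yields $[\z,\nu_{T_a^b(u)}]^\pm=(T_a^b(u)^m)^\pm[\w\chi_{\{a<u<b\}},\nu_{T_a^b(u)}]^\pm$, while at the complementary points the analogous trace computation forces both sides of \eqref{zwboundarydjump} to vanish.

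For (ii), the key pointwise bound (for $m>0$) is $|\z|=u^m|\w|\le a^m\|\w\|_\infty$ on $\{u\le a\}$. For a.e. $a>0$, $\chi_{\{u\le a\}}\in BV(\Omega)\cap L^\infty(\Omega)$, so by Lemma \ref{lemmacaselles} $\chi_{\{u\le a\}}\z\in X_{\mathcal M}(\Omega)$, with $[\chi_{\{u\le a\}}\z,\nu]=\chi_{\{u^\Omega\le a\}}[\z,\nu]$ and $\|[\chi_{\{u\le a\}}\z,\nu]\|_{L^\infty(\partial\Omega)}\le\|\chi_{\{u\le a\}}\z\|_\infty\le a^m\|\w\|_\infty$. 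Letting $a\to 0^+$ along a good sequence then gives $[\z,\nu]=0$ $\mathcal H^{N-1}$-a.e. on $\{u^\Omega=0\}$. The main technical obstacle throughout is keeping track of the exceptional set of $(a,b)$ coming from the coarea formula and from the atoms of the pushforward of the boundary trace, so that all trace identifications are justified simultaneously.
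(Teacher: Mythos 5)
Your part (i) follows essentially the paper's own route: the paper factors $\w\chi_{\{a<u<b\}}=\left(u^{-m}\chi_{\{a<u<b\}}\right)\z$, observes that this factor lies in $BV(\Omega)\cap L^\infty(\Omega)$ for a.e. $0<a<b\le+\infty$, and then gets membership in $X_{\mathcal M}(\Omega)$ and \eqref{zwboundaryd} from Lemma \ref{lemmacaselles} (via \eqref{cas-trace}) and \eqref{zwboundarydjump} from \eqref{traceproduct}; since $u=T_a^b(u)$ on $\{a<u<b\}$, your $h$ is exactly this factor, and your extra bookkeeping (finite perimeter of level sets for a.e. $a,b$, atoms of the pushforward of the boundary trace, one-sided densities on $J_{T_a^b(u)}$) just makes explicit what the paper leaves implicit. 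One small slip: $T_0^b(u)$ need not belong to $BV(\Omega)$ for $u\in TBV^+(\Omega)$, since there is no control below a positive level; but the finite perimeter of $\{u<b\}$ for a.e. $b$ follows anyway from the coarea formula applied to $T_a^\infty(u)$ with $0<a<b$, so nothing is lost. Part (ii) is where you genuinely diverge. The paper proves \eqref{ggf} by a blow-up argument at $\mathcal H^{N-1}$-a.e. $x_0\in\partial\Omega$: it recovers $[\z,\nu](x_0)$ as a limit of $\rho\int_{\partial\Omega}\varphi_\rho\,[\z,\nu]\,\d\haus$ for rescaled mollifiers $\varphi_\rho$, rewrites this by the Gauss--Green formula, and estimates $\rho\int_\Omega \z\cdot\nabla\varphi_\rho$ through the mean of $u^m$ over $\Omega\cap B_\rho(x_0)$, which vanishes at trace points of $\{u=0\}$. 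You instead truncate: $\chi_{\{u\le a\}}\z\in X_{\mathcal M}(\Omega)$ with $\|\chi_{\{u\le a\}}\z\|_\infty\le a^m\|\w\|_\infty$, so the Anzellotti bound $\|[\cdot,\nu]\|_{L^\infty(\partial\Omega)}\le\|\cdot\|_{\infty}$ together with \eqref{cas-trace} and the fact (from \eqref{indeed-trace}) that the trace of $\chi_{\{u\le a\}}$ equals $1$ at $\mathcal H^{N-1}$-a.e. point of $\{u^\Omega=0\}$ yields $|[\z,\nu]|\le a^m\|\w\|_\infty$ there; letting $a\to 0^+$ along a countable sequence of good levels (so the exceptional null sets remain negligible) gives \eqref{ggf}, using $m>0$. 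This is correct and arguably cleaner: it needs only the product rule and the trace bound already in Section \ref{GreenAnz}, with no mollifier computation; the paper's pointwise blow-up, in exchange, works at a single boundary point and does not require the sets $\{u\le a\}$ to have finite perimeter.
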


\begin{proof}
Since {$u\in TBV^+(\Omega) \cap L^\infty(\Omega)$, $u^{-m}\chi_{\{a<u<b\}}\in BV(\Omega)\cap L^\infty(\Omega)$ for almost any $0<a<b\le +\infty$. Therefore,} it follows from Lemma \ref{lemmacaselles} {(applied with $u^{-m}\chi_{\{a<u<b\}}$ in place of $u$)} that $\w\chi_{\{{a<u<b}\}}\in X_{\mathcal M}(\Omega)$ and \eqref{zwboundaryd} holds. By the same argument, \eqref{zwboundarydjump} follows immediately from  \eqref{traceproduct}.

\smallskip

Let us prove (ii).
Let $\varphi$ be a non-negative mollifier and $\varphi_\rho(x)=\rho^{-N}\varphi((x-x_0)/\rho)$. Then{, for $\mathcal H^{N-1}$-a.e. $x_0\in \partial \Omega$ we have}
\begin{eqnarray*}
[\z,\nu](x_0) 
&=& C \lim_{\rho\to 0} \rho \int_{\partial\Omega}\varphi_\rho [\z,\nu]\d \haus
\\
&=&  C\lim_{\rho\to 0} \left(\rho \int_{\Omega} \z \cdot\nabla \varphi_\rho\d x + \rho \int_{\Omega}\varphi_\rho \dive \z \d  x\right).
\end{eqnarray*}
The second integral on the r.h.s. vanishes in the limit since $\dive \z\in L^\infty(\Omega)$. For the first one, since $|\z|\le u^m$
{
and $|\nabla \varphi_\rho|\le C \rho^{-N-1}\chi_{B_\rho(x_0)}$, for a.e. $x_0\in \partial\Omega$ we have
\begin{eqnarray*}
\limsup_{\rho\to 0 }\rho \int_{\Omega} \z \cdot\nabla \varphi_\rho\d x & \le &  C \limsup_{\rho\to 0} \fint_{\Omega\cap B_\rho(x_0)} u^m \d x
\\ &\le& C \|u\|_{L^{\infty}(\Omega)}^{m-1} \lim_{\rho\to 0} \fint_{\Omega\cap B_\rho(x_0)} u \d x \ =\  C \|u\|_{L^{\infty}(\Omega)}^{m-1} u(x_0),
\end{eqnarray*}
hence $[\z,\nu](x_0)=0$ $\mathcal H^{N-1}$-a.e. on $\{u=0\}\cap \partial\Omega$.
}
\end{proof}

The last auxiliary result we need shows that, as intuition suggests, in case $u\in DTBV^+(\Omega)$, pairings of the form $(\z,DT(u))$ are oblivious to the values of $\z$ outside supp$(T')$.

\begin{lemma}\label{lemaauxu=0}
  Let $u\in DTBV^+(\Omega)\cap L^\infty(\Omega)$  and $\z\in  X_{\mathcal{M}}(\Omega)$. Then $\z\chi_{\{a<u<b\}}\in X_{\mathcal M} (\Omega)$ for a.e. $0<a<b\le + \infty$ and
  \begin{equation}\label{sd}
  (\z,D {T_a^b(u)})=(\z\chi_{\{ a<u<b\}},D{T_a^b(u)}) \quad\mbox{for a.e. $a<b\leq +\infty$}.
  \end{equation}
\end{lemma}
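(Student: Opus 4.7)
My plan is to reduce the identity \eqref{sd} to a direct computation with the Anzellotti pairing: apply the product formula \eqref{anzellotti-caselles} once to $(\z,DT_a^b(u))$ and once to $(\chi_{\{a<u<b\}}\z,DT_a^b(u))$, subtract, and show the difference vanishes as a consequence of the hypothesis $u\in DTBV^+(\Omega)$.

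First I would select the good pairs $(a,b)$. The coarea formula applied to $T_c^\infty(u)\in BV(\Omega)$ (for arbitrarily small $c>0$) shows that $\chi_{\{u>t\}}\in BV(\Omega)$ and $|\{u=t\}|=0$ for a.e. $t>0$; hence for a.e. $0<a<b\le +\infty$ the three functions
$$
w:=\chi_{\{a<u<b\}},\quad W_1:=\chi_{\{u\le a\}},\quad W_2:=\chi_{\{u\ge b\}}
$$
belong to $BV(\Omega)\cap L^\infty(\Omega)$, with $1-w=W_1+W_2$ pointwise. Lemma \ref{lemmacaselles} applied to $w$ immediately gives the first claim, namely $\z\chi_{\{a<u<b\}}=w\z\in X_{\mathcal M}(\Omega)$.

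Applying \eqref{anzellotti-caselles} to $(T_a^b(u),\z)$ and $(T_a^b(u),w\z)$ and subtracting gives
$$
(\z,DT_a^b(u))-(w\z,DT_a^b(u))= \dive\bigl(T_a^b(u)(1-w)\z\bigr)-T_a^b(u)^*\dive\bigl((1-w)\z\bigr).
$$
Since $T_a^b(u)W_i=c_iW_i$ with $c_1:=a$ and $c_2:=b$, linearity together with Lemma \ref{lemmacaselles} applied to each $W_i$ recasts the right-hand side as
$$
\sum_{i=1,2}\bigl(c_i-T_a^b(u)^*\bigr)\bigl(W_i^*\dive\z+(\z,DW_i)\bigr),
$$
so \eqref{sd} reduces to showing that each of these four measures is zero.

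The core step, and the main obstacle, is to justify the $\mathcal H^{N-1}$-a.e.\ identifications $T_a^b(u)^*=c_i$ on the supports of $W_i^*\dive\z$ and of $(\z,DW_i)$. Both $\dive\z$ and $(\z,DW_i)$ are absolutely continuous with respect to $\mathcal H^{N-1}$ (the first by the remark preceding Lemma \ref{lemmacaselles}, the second through $|DW_i|$), so it suffices to work $\mathcal H^{N-1}$-a.e. A point with $W_1^*(x)=1$ has density one for $\{u\le a\}$, hence $u^\vee(x)\le a$; the hypothesis $u\in DTBV^+(\Omega)$ collapses $u^\vee=u^\wedge=u^*$ off the $\mathcal H^{N-1}$-null set $S_u^*$, so $T_a^b(u)^*=T_a^b(u^*)=a=c_1$ on $\{W_1^*=1\}$ $\mathcal H^{N-1}$-a.e., and analogously for $i=2$. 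For the $(\z,DW_i)$-term, $|DW_i|$ is concentrated on the essential boundary $\partial^*\{u>c_i\}$, which by standard BV theory lies in $\{u^\wedge\le c_i\le u^\vee\}$ up to $\mathcal H^{N-1}$-null sets; the same $DTBV^+$ collapse yields $u^*=c_i$ there, and therefore $T_a^b(u)^*=c_i$, completing the cancellation.
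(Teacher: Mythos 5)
Your argument is correct, but it is organized differently from the paper's. The paper first treats $b=+\infty$ by shifting the truncation, $\overline T(u):=T_a^\infty(u)-a$, so that $\overline T(u)=\overline T(u)\chi_{\{u>a\}}$; two applications of \eqref{anzellotti-caselles} then leave a single error term $\overline T(u)\,(\z,D\chi_{\{u>a\}})$, which dies because $(\z,D\chi_{\{u>a\}})\ll|D\chi_{\{u>a\}}|$ and $\overline T(u)=0$ $|D\chi_{\{u>a\}}|$-a.e.\ (here $\mathcal H^{N-1}(S_u^*)=0$ enters); the case $b<+\infty$ is then obtained by composing $T_a^b=T_b\circ T_a^\infty$ and repeating the trick. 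You instead subtract the two Anzellotti identities for $T_a^b(u)$ paired with $\z$ and with $\chi_{\{a<u<b\}}\z$ in one shot, decompose the complement through the partition $\{u\le a\}\cup\{a<u<b\}\cup\{u\ge b\}$, and kill the four resulting measures by identifying $T_a^b(u)^*$ with $a$, resp.\ $b$, $\mathcal H^{N-1}$-a.e.\ on their carriers. The shifted-truncation device buys the paper something concrete: it never has to discuss the part of $\dive\z$ (which may charge $(N-1)$-dimensional sets) sitting inside $\{u\le a\}$ or $\{u\ge b\}$, whereas your route must control the terms $(c_i-T_a^b(u)^*)\,W_i^*\dive\z$ there, at the price of invoking in addition $\dive\z\ll\mathcal H^{N-1}$, $|DW_i|=\mathcal H^{N-1}\res\partial^*\{u>c_i\}$ and the inclusion $\partial^*\{u>c_i\}\subset\{u^\wedge\le c_i\le u^\vee\}$; in exchange you handle $a$ and $b$ symmetrically in a single pass. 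One small point to patch: for the $\dive\z$-terms you verify $T_a^b(u)^*=c_i$ only on $\{W_i^*=1\}$, but the measure $W_i^*\dive\z$ also lives on $\{W_i^*=1/2\}$, i.e.\ on $\partial^*\{u>c_i\}$ (where $\dive\z$ may well charge mass); this set is, however, exactly the one you treat in your last sentence for the $(\z,DW_i)$-terms, and the same collapse $u^\wedge=u^\vee=c_i$ $\mathcal H^{N-1}$-a.e.\ there closes the gap, so the proof is complete once you state that the identification holds on all of $\{W_i^*\neq 0\}$.
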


\begin{proof}
Since $\chi_{\{a<u<b\}}\in BV(\Omega)\cap L^\infty(\Omega)$ for a.e. $a>0$ and a.e. $a<b\leq +\infty$, it follows from Lemma \ref{lemmacaselles} that $\z\chi_{\{a<u<b\}}\in X_{\mathcal M} (\Omega)$ for a.e. $a>0$ and $a<b\leq+\infty$.  We first prove \eqref{sd} for $b=+\infty$, i.e.,
\begin{equation}\label{pp}
  (\z,D{T_a^\infty(u)})=(\z\chi_{\{ u>a\}},D{T_a^\infty(u)}) \quad\mbox{for a.e. $a>0$}.
\end{equation}
We let $\overline T(s):={T_a^\infty(s)-a}$ and we note that
\begin{equation}\label{df}
DT_a^\infty(u)=D\overline T(u), \quad  \overline T(u)=\overline T(u)\chi_{\{u>a\}}.
\end{equation}
Then
\begin{eqnarray*}
(\z\chi_{\{u>a\}},DT_a^\infty(u)) &\stackrel{\eqref{df}}=& (\z\chi_{\{u>a\}},D\overline T(u))
\\ &\stackrel{\eqref{anzellotti-caselles}}=& \dive (\z\chi_{\{u>a\}}\overline T(u))- \overline  T(u) \dive(\z\chi_{\{u>a\}})
\\
&\stackrel{\eqref{anzellotti-caselles},\eqref{df}}=& \dive (\z\overline T(u))- \overline T(u) \dive \z -\overline T(u) (\z,D\chi_{\{u>a\}})
\\
&\stackrel{\eqref{anzellotti-caselles}}=& (\z,D\overline T(u)) -\overline T(u) (\z,D\chi_{\{u>a\}})\,.
\end{eqnarray*}
Note that  $(\z,D\chi_{\{u>a\}})\ll |D\chi_{\{u>a\}}|$ and $\overline T(u)=0$ $|D\chi_{\{u>a\}}|$-a.e. (since $\mathcal H^{N-1}(S_u^*)=0$). Therefore,  $\overline T(u) (\z,D\chi_{\{u>a\}})=0$ and the conclusion follows using again \eqref{df}.

\smallskip

We now prove the statement for a generic $b<+\infty$. The argument is the same, but exploits \eqref{pp}. We note that
\begin{equation}\label{dfd}
T_a^b(s) = T_{a}^{\infty}(T_{b}(s))= T_{b}(T_{a}^{\infty}(s))\quad \mbox{for all $s\geq 0$.}
\end{equation}
Therefore
$$
(\z\chi_{\{a<u<b\}},DT_a^b(u)) \stackrel{\eqref{dfd}}= (\z\chi_{\{u>a\}}\chi_{\{u<b\}},D T_{b}(T_{a}^{\infty}(u)))=(\overline \z \chi_{\{u<b\}}, D \overline T(u)),
$$
where $\overline \z = \z \chi_{\{u>a\}}$  $\overline  T(u)=T_{b}(T_{a}^{\infty}(u))-b$. Noting that $\overline T(u)=\overline T(u)\chi_{\{u<b\}}$ and arguing exactly as above, we obtain
$$
(\z\chi_{\{a<u<b\}},DT_a^b(u))= (\overline \z,D\overline T(u)) -\overline T(u) (\overline \z,D\chi_{\{u<b\}})= (\overline \z,D\overline T(u)),
$$
where in the last equality we have used that $(\overline \z,D\chi_{\{u<b\}})\ll |D\chi_{\{u<b\}}|$ and that $\overline T(u)=0$ $|D\chi_{\{u<b\}}|$-a.e. (here we use again that $\mathcal H^{N-1}(S_u^*)=0$).  Therefore, recalling the definition of $\overline \z$ and $\overline T$,
$$
(\z\chi_{\{a<u<b\}},DT_a^b(u))=(\z\chi_{\{u>a\}},D T_a^\infty(T_b(u))) \stackrel{\eqref{dfd}}= (\z,D T_a^b(u)).
$$
\end{proof}

{
\subsection{Existence}
We can now look at the  existence of a solution to \eqref{pde} in the case $m>0$.  We assume:
\begin{equation}
\label{hp-f-resolventd}
{0\leq f\in L^\infty(\Omega)\,,\quad
0\leq g\in L^\infty(\partial\Omega).}
\end{equation}

We introduce the following notion of solution. }

\begin{definition}\label{defi1d}
Assume $m>0$ and \eqref{hp-f-resolventd}. A function {$u:\Omega\to [0,+\infty{)}$} is a solution {of} problem {(\ref{pde})} {with data $(f,g)$} if $u\in {TBV^+(\Omega)}\cap L^\infty(\Omega)$ {and} there exist $\w\in L^\infty(\Omega;\rn)$ such that $\|\w\|_{\infty}\leq 1$ and $\z:=u^m\w\in X(\Omega)$ satisfies
{
\begin{equation}\label{identify-wd-new}
|D \phi({T}(u))| \le (\z,D T(u)) \quad \mbox{as measures for any } T\in\mathcal T,
\end{equation}
}
\begin{equation}\label{sm9d}
u-f=\dive \z \quad\mbox{in $\mathcal D'(\Omega)$}
\end{equation}
and
\begin{subequations}\label{boundcondd}
\begin{equation}\label{boundconddu>g}
\mbox{$u\ge g$ $\haus$-a.e. on $\partial \Omega$}\,,
\end{equation}
\begin{equation}\label{boundcondd=}  [\z,\nu] =
-u^{m} \ \  {\rm if \ } {u > g} \quad\mbox{$\mathcal H^{N-1}$-a.e. on $\partial\Omega$.}
\end{equation}
\end{subequations}
\end{definition}

\begin{remark} \label{vb}
{
The} boundary conditions are consistent with  those used  in \cite{ACMM_ma10} for equation \eqref{pmrhe} with $m=1$.
\end{remark}

Definition \ref{defi1d} differs from Definition \ref{defi1}
{
since we allow data (and therefore solutions) {to} become zero: since the equation degenerates, we have little control at $u=0$ and we need to use truncation functions.} For data which are bounded away from zero this new formulation is not needed and well-posedness can be obtained as in the previous section with minor modifications. Indeed, if there exists $C>0$ such that $C\leq f(x)$ for a.e.  $x\in\Omega$ and $C\leq g(x)$, for a. e.  $x\in\partial\Omega$,  it is straightforward to see that $v\equiv C$ is a subsolution to \eqref{resolvent-eps}. Therefore the approximate solutions, whence the limiting solutions obtained in Lemma \ref{l-res-weakd} below, are strictly positive.

\smallskip

The main result of this section is the following.

\begin{theorem}\label{exid}
Assume $m>0$ and \eqref{hp-f-resolventd}. Then there exists a unique solution $u$ of {\eqref{pde}} {with data $(f,g)$ in the sense of Definition \ref{defi1d}}. {Furthermore, $u\in DTBV^+(\Omega)$,
\begin{subequations}\label{em-tot}
\begin{equation}\label{em1}
{({\w},DT_a^b(u))=|D T_a^b(u)|} \quad\mbox{for a.e. $0<a<b\leq +\infty$,}
\end{equation}
and
\begin{equation}\label{em2}
({\z},D T_a^b(u))=\left|D{\phi(T_a^b(u))}\right| \quad\mbox{for a.e. $0<a<b\leq +\infty$.}
\end{equation}
\end{subequations}
}
\end{theorem}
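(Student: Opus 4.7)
The plan is to follow the blueprint of Section \ref{sing}, systematically routing all differential-calculus arguments through truncations $T_a^b(u)$ to compensate for the loss of a priori control on $|Du|$ near $\{u=0\}$. Existence will come from passage to the limit in \eqref{resolvent-eps}, the $DTBV^+$-regularity and the identifications \eqref{em-tot} from a jump-set argument applied to each truncation, and uniqueness from an $L^1$-contraction principle analogous to Theorem \ref{comp}.

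For existence I will start from the approximate solutions $u_\eps\in[0,k]$, $k:=\max\{\|f\|_\infty,\|g\|_\infty\}$, provided by Lemma \ref{exi-eps}. Setting $\w_\eps=\nabla u_\eps/|\nabla u_\eps|_\eps$ and $\tilde\z_\eps=(\eps+u_\eps)^m\w_\eps$, and testing \eqref{weak-eps} with $F(u_\eps)-F(\tilde g)$ for $F\in W^{1,\infty}_a$ nondecreasing (with $\tilde g$ an $H^1$-extension of $g$), the inequality \eqref{lgp1}, exactly as in \eqref{lg6}, produces a uniform $BV$-bound on $\phi_F(u_\eps)$, hence on $T(u_\eps)$ for every $T\in\mathcal T$. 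Along a subsequence one obtains $u\in TBV^+(\Omega)\cap L^\infty(\Omega)$, $\w_\eps\stackrel{*}{\rightharpoonup}\w$ with $\|\w\|_\infty\le 1$, $\tilde\z_\eps\stackrel{*}{\rightharpoonup}\z=u^m\w$, and \eqref{sm9d}. Repeating the test-function computation of Lemma \ref{l-res-weak} with $F=T\in\mathcal T$ and using lower semicontinuity yields the constitutive inequality \eqref{identify-wd-new}. The boundary conditions mirror Lemma \ref{lemma36} with reversed signs: since $\phi'(s)=s^m$ is now \emph{increasing}, the truncated analog of \eqref{ineqboundary-Id} forces $u\ge g$, i.e.\ \eqref{boundconddu>g}; choosing $F(s)=s^{mp+1}/(mp+1)$ in the truncated analog of \eqref{ineqboundary} and letting $p\to+\infty$ gives $[\w\chi_{\{a<u<b\}},\nu]=-1$ on $\partial\Omega\cap\{u>g\}\cap\{a<u<b\}$, which combined with \eqref{zwboundaryd} and \eqref{ggf} of Lemma \ref{lematbvjump} produces \eqref{boundcondd=}.

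The $DTBV^+$-regularity and the identifications are where the Section \ref{5.1} tools become essential. For a.e.\ $0<a<b$, Lemma \ref{lemaauxu=0} gives $(\z,DT_a^b(u))=(\z\chi_{\{a<u<b\}},DT_a^b(u))$; on $\{a<u<b\}$ the function $u$ is bounded away from $0$, so the jump-set argument of Lemma \ref{cont} (based on Lemmas \ref{Ammbrdd} and \ref{lemamb}, now complemented by \eqref{zwboundarydjump}) applies to $T_a^b(u)$ and yields $\mathcal H^{N-1}(J_{T_a^b(u)})=0$. Lemma \ref{tbvjump}(i) then identifies $S_u^*=\bigcup_{a>0}S_{T_a^\infty(u)}$ as $\haus$-null, so $u\in DTBV^+(\Omega)$, and the squeeze
\[
u^m|\tilde D T_a^b(u)|=|D\phi(T_a^b(u))|\le(\z,DT_a^b(u))\stackrel{\eqref{eq-ms}}{=}u^m(\w,DT_a^b(u))\le u^m|\tilde D T_a^b(u)|
\]
delivers both \eqref{em1} and \eqref{em2}. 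Uniqueness will then follow in the style of Theorem \ref{comp}, by multiplying the difference of the two equations by $T_\eps(T_a^b(u)-T_a^b(\overline u))^+$ for a.e.\ $0<a<b$, using the monotonicity $(\w-\overline\w,DT_a^b(u)-DT_a^b(\overline u))\ge 0$ which is the analog of \eqref{monotonicity} granted by \eqref{em1}, Lemma \ref{lemaauxu=0} to localize the pairings to $\{a<u<b\}$, and \eqref{boundcondd=} to control the boundary term (exploiting again the increasing $\phi'$); passing $\eps\to 0$, then $a\to 0^+$ and $b\to+\infty$ closes the contraction estimate.

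The principal obstacle is precisely the degeneracy at $\{u=0\}$: the approximate fluxes $\tilde\z_\eps$ carry no uniform lower bound, so every manipulation performed directly on $u$ in Section \ref{sing} must be rerouted through its truncations, and one has to verify that the \emph{a.e.\ admissible} choice of $(a,b)$ is consistent across existence, identification, and comparison, while the final passages $a\to 0^+$ and $b\to+\infty$ remain legitimate at each stage. A secondary delicate point is the reversal of monotonicity of $\phi'$ relative to the singular case, which flips the boundary inequality from $u\le g$ to $u\ge g$ and must be tracked carefully throughout the identification of the boundary traces.
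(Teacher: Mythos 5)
Your existence, boundary-trace, jump-set and identification arguments are essentially those of the paper (Lemmas \ref{l-res-weakd}, \ref{lematp}, \ref{4.8}, \ref{equalmeasured}), so that part of the plan is sound. The genuine gap is in the comparison/uniqueness step. You test the difference of the two equations with the single function $T_\eps\bigl((T_a^b(u)-T_a^b(\overline u))_+\bigr)$ and then invoke the scheme of Theorem \ref{comp}: split $\z-\overline\z$ into $(u^m-\overline u^m)\w$ plus $\overline u^m(\w-\overline\w)$, keep the second term by monotonicity and discard the first as $O(\eps)$. Two things break for $m>0$. First, pulling the scalars $u^m-\overline u^m$ and $\overline u^m$ out of the Anzellotti pairings requires \eqref{eq-ms}, which needs the multiplier to lie in $DBV(\Omega)\cap L^\infty(\Omega)$; here $u,\overline u$ are only in $DTBV^+(\Omega)$, so $u^m,\overline u^m$ need not be $BV$, and $\overline u^m(\w-\overline\w)$ is not known to be a divergence-measure field, so the pairing \eqref{defmeasx144} is not even defined for it. Lemma \ref{lemaauxu=0} does not rescue this: it localizes $(\z,DT_a^b(u))$ for the \emph{same} function $u$, whereas your test function mixes $u$ and $\overline u$ (the monotone part, too, must be phrased with $\w\chi_{\{u>a\}}$ and $\overline\w\chi_{\{\overline u>a\}}$, as in the paper, to be meaningful). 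Second, even formally, the smallness $|u^m-\overline u^m|\le C\eps$ used in \eqref{p2} fails on the portion of $\{0<T_a^b(u)-T_a^b(\overline u)<\eps\}$ where $\overline u<a<u<a+\eps$: there the difference is of order $a^m$, not $\eps$, so after dividing by $\eps$ the error is of order $a^m\,|DT_a^\infty(u)|(\Omega)$, and the a priori information (which only controls $a^m\int_\Omega|DT_a^\infty(u)|$ uniformly, cf.\ \eqref{lg6d}) does not force this to vanish as $a\to0^+$.

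This is precisely what the paper's proof of Theorem \ref{UniqEllipticd} is engineered to avoid: the equation for $u$ is tested with $T(u)\,T_\eps\bigl((T_a^\infty(u)-T_a^\infty(\overline u))_+\bigr)$ and the one for $\overline u$ with $T(\overline u)\,T_\eps\bigl((T_a^\infty(u)-T_a^\infty(\overline u))_+\bigr)$, where $T=T_a^b-a$ vanishes below level $a$. The extra factors make every scalar multiplier a Lipschitz function of a truncation (hence $BV$), so the product rule \eqref{f1} and Lemmas \ref{lematbvjump}, \ref{lemaauxu=0} apply, and they make the mismatch $|T(u)u^m-T(\overline u)\overline u^m|\le C(a)\eps$ on the relevant set, so the coarea argument closes. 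The price is a different endgame, which your plan also misses: one divides by $b$ and passes to the limit $a\to0$ and then $b\to0$ (not $b\to+\infty$), and one must prove separately, using \eqref{ggf} and \eqref{em2}, that $f=0$ a.e.\ on $\{u=0\}$ and $\overline f=0$ a.e.\ on $\{\overline u=0\}$ in order to recover the full contraction $\int_\Omega(u-\overline u)^+\le\int_\Omega(f-\overline f)^+$. As written, your uniqueness step therefore does not go through; you would need either to adopt the paper's weighted test functions or to supply a genuinely new justification for the pairing manipulations and for the error estimate near the truncation level $u=a$.
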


{In proving existence of a solution, we will follow the arguments used in the singular case highlighting only the main differences, which are related to the need of using truncation functions.}

\begin{lemma}\label{l-res-weakd}
Assume $m>0$ and \eqref{hp-f-resolventd}. Then there exists a pair $(u,\w)$ such that $u\in TBV^{+}(\Omega)\cap L^\infty(\Omega)$ and $\w\in {L^\infty(\Omega;\rn)}$ with
$\|\w\|_\infty \leq 1$, such that {$\z=u^m\w \, \in X(\Omega)$}, and
\begin{equation}\label{lg9d}
u-f=\dive \z \quad\mbox{in $\mathcal D'(\Omega)$.}
\end{equation}
Furthermore,
\begin{equation}
  \label{ineq-z-Fd}|D\phi_{F}(T(u))|  \leq  (\z,DF(T(u))) \quad\mbox{as measures,}
\end{equation}
and
 \begin{equation}
  \label{ineqboundaryd} |\phi_{F}(T(g)) - \phi_{F}(T(u)) |  \leq  (F(T(g))- F(T(u)))[\z,\nu]\quad\mbox{$\mathcal H^{N-1}$-a.e. on $\partial\Omega$}
\end{equation}
for any $T\in\mathcal{T}$ and any nondecreasing $F\in W^{1,\infty}_{loc}((0,+\infty))$, with $\phi_F$ as in \eqref{def-phi-F}.
\end{lemma}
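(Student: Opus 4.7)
The proof mirrors that of Lemma \ref{l-res-weak}, with truncations replacing the global lower bound (which is no longer available when $m>0$). Let $u_\eps$ be the approximating solution of \eqref{resolvent-eps} from Lemma \ref{exi-eps}, satisfying $0 \le u_\eps \le M := \max\{\|f\|_{L^\infty(\Omega)}, \|g\|_{L^\infty(\partial\Omega)}\}$ by \eqref{li}. Fix $\tilde g \in H^1(\Omega) \cap L^\infty(\Omega; [0, M])$ extending $g$. For each $T = T_a^b \in \mathcal{T}$, the function $T(u_\eps) - T(\tilde g) \in H^1_0(\Omega)$ is admissible in \eqref{weak-eps}; using \eqref{lgp1} and the fact that $(\eps+u_\eps)^m \ge a^m$ where $T'(u_\eps) = \chi_{\{a<u_\eps<b\}} = 1$, one obtains a uniform (in $\eps$) $BV$-bound on $T(u_\eps)$. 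Testing also with $u_\eps - \tilde g$ (and using $0 \le (\eps + u_\eps)^m \le (1+M)^m$) yields the companion bound $\eps\|\nabla u_\eps\|_{L^2}^2 \to 0$.

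Extracting diagonal subsequences produces $u_\eps \to u$ in every $L^p(\Omega)$, $p<\infty$, and a.e., with $u \in TBV^+(\Omega) \cap L^\infty(\Omega)$; $\w_\eps := \nabla u_\eps / |\nabla u_\eps|_\eps \stackrel{*}{\rightharpoonup} \w$ in $L^\infty(\Omega;\R^N)$ with $\|\w\|_\infty \le 1$; and $\eps\nabla u_\eps \to 0$ in $L^2(\Omega;\R^N)$. Since $m>0$, $(\eps+u_\eps)^m \to u^m$ strongly in every $L^p$, so by a standard strong--weak product argument $\tilde\z_\eps := (\eps+u_\eps)^m \w_\eps \rightharpoonup u^m \w =: \z$ weakly in every $L^p$. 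Passing to the limit in \eqref{resolvent-eps} then gives \eqref{lg9d}; since $\dive \z = u - f \in L^\infty(\Omega)$, we have $\z \in X(\Omega)$.

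To prove \eqref{ineq-z-Fd}--\eqref{ineqboundaryd}, fix $T \in \mathcal{T}$, $F \in W^{1,\infty}_{\mathrm{loc}}((0,+\infty))$ nondecreasing, and $\varphi \in C^\infty(\overline{\Omega})$ with $\varphi \ge 0$. Since $F\circ T$ is bounded and Lipschitz on $[0,\infty)$, $\varphi(F(T(u_\eps)) - F(T(\tilde g))) \in H^1_0(\Omega)$ is admissible in \eqref{weak-eps}. Where $T'(u_\eps) > 0$ one has $u_\eps = T(u_\eps)$, hence by \eqref{lgp1} and $F' \ge 0$,
$$
\int \varphi F'(T(u_\eps)) T'(u_\eps)\, \z_\eps \cdot \nabla u_\eps\,\d x \;\ge\; \int \varphi\, |\nabla \phi_F(T(u_\eps))|\,\d x + o_\eps(1).
$$
The remaining terms in the tested identity pass to the limit via the convergences above; substituting $u - f = \dive\z$ in the limit and applying the Green formula \eqref{Green} to the bounded $BV$ function $F(T(u)) - F(T(\tilde g))$ (Lemma \ref{lemmacaselles}), together with the lower-semicontinuity result \cite[Theorem 1]{ACMM_ma10} for the LHS, yields
$$
\int\varphi\,\d|D\phi_F(T(u))| + \int_{\partial\Omega} \varphi\, |\phi_F(T(u)) - \phi_F(T(g))|\,\d\haus \;\le\; \int\varphi\,\d(\z, DF(T(u))) - \int_{\partial\Omega} \varphi\,(F(T(u)) - F(T(g)))[\z,\nu]\,\d\haus.
$$
Letting $\varphi$ vary in $C^\infty_0(\Omega)$ gives \eqref{ineq-z-Fd}, while letting $\varphi$ concentrate on $\partial\Omega$ gives \eqref{ineqboundaryd}.

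The main difficulty relative to the singular case is the identification $\z = u^m \w$: the weak-$*$ limit $\w$ is not determined pointwise on $\{u=0\}$, but this is harmless because the factor $u^m$ vanishes there (since $m>0$), so $\z$ is unambiguously defined and the passage to the limit in the divergence equation goes through. The use of truncations from $\mathcal{T}$ (rather than from $\mathcal{T}^\infty$), whose range is compactly contained in $(0,\infty)$, is what allows both the uniform $BV$ estimate and the Green-formula machinery to survive the degeneracy at $\{u=0\}$: the composition $F\circ T$ is then globally Lipschitz on $[0,\infty)$ so that $F(T(u)) \in BV(\Omega)$ and the pairing $(\z, DF(T(u)))$ is a well-defined Radon measure.
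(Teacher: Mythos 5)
Your proposal is correct and follows essentially the same route as the paper: approximation via \eqref{resolvent-eps}, a priori bounds obtained by testing with (truncations of) $u_\eps-\tilde g$, identification of $\z=u^m\w$ through the strong convergence of $(\eps+u_\eps)^m$ against the weak-$*$ limit of $\w_\eps$, and then the test function $\varphi\bigl(F(T(u_\eps))-F(T(\tilde g))\bigr)$ combined with the Anzellotti--Green formula and the lower semicontinuity result of \cite{ACMM_ma10} to get \eqref{ineq-z-Fd}--\eqref{ineqboundaryd}. The only (harmless) variation is that you obtain compactness from the uniform $BV$ bounds on $T_a^b(u_\eps)$ via a diagonal argument, whereas the paper extracts it from the single test with $u_\eps-\tilde g$, which also yields the uniform $BV$ bound on $u_\eps^{m+1}$.
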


\begin{proof} Arguments are analogous to those of Lemma {\ref{l-res-weak}}. Let $\tg$ be a function in $H^1(\Omega;[0,\|{ g\|_{L^{\infty}({\partial\Omega})}])}$ such that $\tg=g$ on $\partial \Omega$.  Again, for simplicity, we agree that $\int f\d \mu=\int_\Omega f\d \mu$, $\int f =\int f\d x$, and $C\ge 1$ denotes a generic constant independent of $\eps\in (0,1)$. Let $u_\eps{\in H^{1}(\Omega)\cap L^\infty(\Omega)}$ be a solution of \eqref{resolvent-eps} as given by Lemma \ref{exi-eps}. We recall that
\begin{equation}
\label{lid}
{0\le u_{\vare}} \leq \max\{\|f\|_{L^{\infty}(\Omega)},\|g\|_{L^{\infty}({\partial\Omega})}\}.
\end{equation}
Testing the equation \eqref{resolvent-eps} by $\ue-\tg$ and using that $\eps^m\le (\vare+\ue)^{m}\leq C$, we get
$$
\int \ue^2 +\int \ue^m|\nabla \ue| + \frac{\eps}{2} \int |\nabla \ue|^2 \leq C\left( 1 + \int f^2 +\int \tg^2+\int|\nabla\tg|^2\right)
$$
and since
$$
\dys \into |\nabla T_a^\infty(\ue)| \leq \int_{\{a\leq \ue\}} |\nabla \ue|
\leq  \frac{1}{a^{m}}\int_{\{a\leq \ue\}}\ue^{m}|\nabla \ue|\leq  \frac{1}{a^{m}}\into\ue^{m}|\nabla \ue|\,,
$$
we conclude that
\begin{equation}\label{lg6d}
\int \ue^2 + {a^m}\int |\nabla T_{a}^\infty(\ue)| {+ \int |\nabla (\ue^{m+1})|}+ {\eps} \int |\nabla \ue|^2 \leq C.
\end{equation}
{Because of \eqref{lg6d} and \eqref{lid}, there exist $u\in TBV^{+}(\Omega)\cap L^\infty(\Omega)$ and $\w\in L^\infty(\Omega;\rn)$ such that (up to subsequences, not relabeled)}
\begin{eqnarray}
\nonumber
\ue \stackrel{*}\rightharpoonup u && \quad\mbox{in  $TBV^{+}(\Omega)$ and in $L^\infty(\Omega)$}
\\
\nonumber
\ue \to u && \quad\mbox{$\mathcal L^N$-a.e. and in $L^p(\Omega)$ for all $p<+\infty$}
\\
\nonumber
{\ue^{m+1} \stackrel{*}\rightharpoonup u^{m+1}} && {\quad\mbox{in  $BV(\Omega)$}}
\\
\label{lgz1d} \eps\nabla \ue \rightarrow 0 && \quad\mbox{strongly in $L^2(\Omega;\R^{N})$}
\\
\nonumber
{\nabla u_\eps/|\nabla u_\eps|_\eps=:} \w_\eps\stackrel{*}\rightharpoonup \w && \quad\mbox{ in $L^\infty(\Omega;\R^{N})$}
\\ \label{lgz2d} {(\vare +u_{\vare})^{m}\w_\eps=:} \tilde \z_\eps \stackrel{*}\rightharpoonup \z= u^m\w && \quad\mbox{ in $L^\infty(\Omega;\R^{N})$},
\end{eqnarray}
and \eqref{lgz1d} and \eqref{lgz2d} combine into
\begin{equation*}
{\tilde\z_\eps+\eps \nabla \ue=:}\z_\eps\rightharpoonup \z \quad\mbox{ in $L^2(\Omega;\rn)$}\,.
\end{equation*}
Passing to the limit as $\eps\to 0$ in the approximating equations we obtain \eqref{lg9d}.

\smallskip

The proof of (\ref{ineq-z-Fd}) and (\ref{ineqboundaryd}) is a straightforward adaptation of that of (\ref{ineq-z-F}) and (\ref{ineqboundary}), testing \eqref{resolvent-eps} by $\varphi(F(T(\ue))-F(T(\tg)))$ with $0\leq\varphi\in C^\infty(\Omega)$. Therefore we omit the details.
\end{proof}

We have the following:

\begin{lemma}\label{lematp}  Assume $m>0$ and \eqref{hp-f-resolventd}. Let $u,\w$ be as in Lemma \ref{l-res-weakd}. Then
\begin{equation}\label{gh}
u\ge g  \quad \mbox{$\haus$-a.e. on $\partial \Omega$}.
\end{equation}
Furthermore,
\begin{equation}\label{boundcondd1}  {[\z,\nu] = -u^{m}} \quad {\rm if \ } {u > g} \quad\mbox{$\mathcal H^{N-1}$-a.e. on $\partial\Omega$.}
\end{equation}
\end{lemma}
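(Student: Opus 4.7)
My plan is to adapt the singular-case argument of Lemma \ref{lemma36} to the degenerate setting, the only real novelty being the need to carry truncations $T_a^b$ throughout in order to accommodate the possible vanishing of $u$. Throughout I fix $b>\max\{\|u\|_{L^\infty(\Omega)},\|g\|_{L^\infty(\partial\Omega)}\}$, so that $T_a^b(u)=\max(u,a)$ and $T_a^b(g)=\max(g,a)$ on $\partial\Omega$.

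\smallskip

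\emph{Step 1 (proof of \eqref{gh}).} I apply \eqref{ineqboundaryd} with $F=\mathrm{Id}$ and $T=T_a^b$, obtaining
$$
|\phi(T_a^b(u))-\phi(T_a^b(g))|\le(T_a^b(g)-T_a^b(u))[\z,\nu]\qquad\mathcal{H}^{N-1}\text{-a.e.\ on }\partial\Omega.
$$
On $\{u=0\}\cap\partial\Omega$, \eqref{ggf} of Lemma \ref{lematbvjump}(ii) gives $[\z,\nu]=0$ $\mathcal{H}^{N-1}$-a.e., so the above forces $\phi(T_a^b(g))=\phi(a)$; strict monotonicity of $\phi$ yields $g\le a$, whence $g=0=u$ by letting $a\downarrow 0$. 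On $\{u>0\}\cap\partial\Omega$, suppose by contradiction that $u<g$ on a subset of positive $\mathcal{H}^{N-1}$-measure; choose $a$ small (and admissible for Lemma \ref{lematbvjump}(i)) so that the subset $\{u>a,\,g>a,\,u<g\}\cap\partial\Omega$ still has positive measure. There both truncations collapse to the identity, and the strict convexity of $\phi$ (since $\phi'(s)=s^m$ is strictly increasing for $m>0$) gives
$$
(g-u)u^m<\phi(g)-\phi(u)\le(g-u)[\z,\nu],
$$
i.e.\ $[\z,\nu]>u^m$, contradicting the pointwise bound $|[\z,\nu]|\le T_a^b(u)^m=u^m$ furnished by \eqref{zwboundaryd}. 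Hence $u\ge g$ $\mathcal{H}^{N-1}$-a.e.\ on $\partial\Omega$.

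\smallskip

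\emph{Step 2 (proof of \eqref{boundcondd1}).} Since $u>g\ge 0$ forces $u>0$, the set $\{u>g\}\cap\partial\Omega$ is $\mathcal{H}^{N-1}$-exhausted by $\{u>g\}\cap\{a<u<b\}\cap\partial\Omega$ as $a\downarrow 0$; on such a subset $T_a^b(u)=u$, $T_a^b(g)\ge a>0$ and $T_a^b(u)>T_a^b(g)$. I apply \eqref{ineqboundaryd} with $F(s)=s^q/q$ (nondecreasing for $q\ge 1$, so that $\phi_F(s)=(s^{q+m}-1)/(q+m)$) and rearrange to
$$
-[\z,\nu]\ge\frac{q}{q+m}\cdot\frac{u^{q+m}-T_a^b(g)^{q+m}}{u^q-T_a^b(g)^q}=\frac{q\,u^m}{q+m}\cdot\frac{1-r^{q+m}}{1-r^q},
$$
where $r:=T_a^b(g)/u\in(0,1)$. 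Letting $q\to+\infty$, the right-hand side tends to $u^m$, so $[\z,\nu]\le-u^m$; combined again with $|[\z,\nu]|\le u^m$ from \eqref{zwboundaryd}, this yields $[\z,\nu]=-u^m$, as required.

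\smallskip

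The main obstacle, with respect to the singular case, is the treatment of the boundary points where $u$ vanishes: in Lemma \ref{lemma36} these were excluded a priori by the uniform lower bound of Lemma \ref{posit}, whereas here one must combine the truncation-compatible trace inequality \eqref{ineqboundaryd} with property \eqref{ggf}, which pins down $[\z,\nu]$ on $\{u=0\}\cap\partial\Omega$.
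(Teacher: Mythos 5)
Your proof is correct and follows essentially the same route as the paper: inequality \eqref{ineqboundaryd} with $F=\mathrm{Id}$ combined with \eqref{ggf} on $\{u=0\}$ and with the strict monotonicity of $\phi'$ plus the trace bound $|[\z,\nu]|\le T_a^b(u)^m$ from \eqref{zwboundaryd} on $\{u>0\}$ gives \eqref{gh}, and then power-type test functions with exponent sent to infinity give \eqref{boundcondd1}. The only (immaterial) differences are your choice $F(s)=s^q/q$ in place of the paper's $F(s)=s^{mp}/p$, and that you write out the limit computation which the paper omits by reference to \eqref{kk}.
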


\begin{proof} The proof is analogous to the one of Lemma \ref{lemma36}, hence we only show the main differences. For notational convenience, we let $T=T_a^b\in \mathcal T$. For \eqref{gh}, applying {\eqref{ineqboundaryd}} with $F(s)=s$, we see that
\begin{equation}
\label{fgh}
\frac{1}{m+1}|(T(g))^{m+1} - (T(u))^{m+1}| \leq (T(g)- T(u))[\z,\nu] {\quad\mbox{$\mathcal H^{N-1}$-a.e. on $\partial\Omega$.}}
\end{equation}

We now argue for a fixed $x\in \partial \Omega$ and up to  $\mathcal H^{N-1}$-negligible sets. If $u(x)=0$ at some point $x\in\partial\Omega$, it follows from \eqref{ggf} that $[\z,\nu]=0$. Hence \eqref{fgh} implies that {$(T(g))^{m+1} = a^{m+1}$} for all $T\in\mathcal T$: therefore $g(x)=0$ and \eqref{gh} holds. If instead $u(x)\neq 0$, let $a$ and $b$ such that $a<u(x)<b$.  We have
\begin{eqnarray}\nonumber
\frac{1}{m+1}|T(g)^{m+1}- T(u)^{m+1}| &\le& (T(g)- T(u))[\z,\nu]
\\\label{as}
& \stackrel{\eqref{zwboundaryd}}\le & T(u)^{m}|T(g)- T(u)|,
\end{eqnarray}
which implies
\eqref{gh} arguing as in the proof of \eqref{d1}.

In order to prove \eqref{boundcondd1}, let $F(s)=\frac{s^{mp}}{p}$  and  let us  fix  $x\in\partial \Omega$ such that \eqref{ineqboundaryd} holds true. Then ${\phi_{F}(T(s))}= \frac{T(s)^{m(p+1)}{-1}}{p+1}$ and we have
\begin{equation}
\label{boundaryphi'd}
\left|\frac{ (T(u))^{m(p+1)}-(T(g))^{m(p+1)}}{p+1}\right|\leq \frac{((T(g))^{mp}- (T(u))^{mp})}{p}[\z,\nu].
\end{equation}
The rest of the proof is similar to that of \eqref{kk} and we omit it.
\end{proof}

The existence part of Theorem \ref{exid} is an immediate consequence of the previous lemmas:

\begin{proof}[Proof of Theorem \ref{exid}, existence]
{Lemma \ref{l-res-weakd}} gives the existence of a function $u\in {TBV^{+}(\Omega)}\cap L^{\infty}(\Omega)$, and $\w\in L^{\infty}(\Omega;\rn)$ with $\|\w\|_{\infty}\leq 1$ such that $\z=u^{m}\w\in X(\Omega)$ and {\eqref{identify-wd-new}} and \eqref{sm9d} are satisfied. The boundary datum $g$ is achieved in the sense of Definition \ref{defi1d} thanks to Lemma \ref{lematp}.
\end{proof}

{

\subsection{Regularity}

In the next two Lemmas, we show that any solution to \eqref{pde} in the sense of Definition \ref{defi1d} has the additional regularity properties stated in Theorem \ref{exid}. First we show that, as in the singular case, solutions' gradients have no jump part.

}

\begin{lemma}\label{4.8}
Assume $m>0$ and \eqref{hp-f-resolventd}.
Let $u\in TBV^{+}(\Omega)\cap L^\infty(\Omega)$ and $\w\in {L^\infty(\Omega)}$ be such that $\z= u^m\w\in X(\Omega)$, $\|\w\|_\infty\leq 1$ and
\begin{equation}
  \label{identify-zd} |D{^j\phi(T(u))}|\leq (\z,DT(u)) {\res J_{T(u)}}\quad {\rm  as \ measures \ }
\end{equation}
for any $T\in\mathcal{T}^{\infty}$.
Then $u\in DTBV^{+}(\Omega)$.
\end{lemma}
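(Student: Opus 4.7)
The plan is to adapt the argument of Lemma \ref{cont}, with extra care at points where $u$ vanishes. First, by Lemma \ref{tbvjump}(i) we have $S_u^{*}=\bigcup_{a>0}S_{v_{a}}$, where $v_{a}:=T_{a}^{\infty}(u)\in BV(\Omega)$, and $\mathcal{H}^{N-1}(S_{v_{a}}\setminus J_{v_{a}})=0$; so it suffices to prove $\mathcal{H}^{N-1}(J_{v_{a}})=0$ for every $a$ in a countable dense set $\mathcal{A}\subset(0,+\infty)$ for which the conclusions of Lemma \ref{lematbvjump}(i) hold (with some fixed $b>\|u\|_{\infty}$). By Lemma \ref{tbvjump}(ii) we may moreover restrict attention to points $x_{0}\in J_{u}^{*}$, at which the weak approximate limits $u^{\vee}(x_{0})$ and $u^{\wedge}(x_{0})$ on the two sides of $\nu_{u}^{*}(x_{0})$ are well defined.

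Fix $a\in\mathcal{A}$; the core computation repeats the one in Lemma \ref{cont}. Since $\z\in X(\Omega)$, Lemma \ref{Ammbrdd} gives $\Psi:=[\z,\nu_{v_{a}}]^{+}=[\z,\nu_{v_{a}}]^{-}$ $\mathcal{H}^{N-1}$-a.e.\ on $J_{v_{a}}$; combining \eqref{anzellotti-caselles} with Lemma \ref{lemamb} applied to $v_{a}\z$ yields $(\z,Dv_{a})\res J_{v_{a}}=(v_{a}^{+}-v_{a}^{-})\Psi\,\mathcal{H}^{N-1}\res J_{v_{a}}$. The hypothesis \eqref{identify-zd} with $T=T_{a}^{\infty}$ then gives
\[
\phi(v_{a}^{+})-\phi(v_{a}^{-})\le (v_{a}^{+}-v_{a}^{-})\,\Psi \qquad \mathcal{H}^{N-1}\text{-a.e.\ on }J_{v_{a}}.
\]
On $J_{v_{a}}\cap\{v_{a}^{-}>a\}$ the trace of $\chi_{\{u>a\}}$ equals $1$ on both sides of $J_{v_{a}}$, and Lemma \ref{lematbvjump}(i) then yields $|\Psi|\le\phi'(v_{a}^{-})$; since $\phi'(s)=s^{m}$ is strictly increasing on $(0,+\infty)$, the displayed inequality contradicts the strict convexity bound $\phi(v_{a}^{+})-\phi(v_{a}^{-})>\phi'(v_{a}^{-})(v_{a}^{+}-v_{a}^{-})$, so this portion is $\mathcal{H}^{N-1}$-null.

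The main obstacle is the complementary piece $J_{v_{a}}\cap\{v_{a}^{-}=a\}$, on which necessarily $u^{\wedge}(x_{0})\le a$. In the easy sub-case $u^{\wedge}(x_{0})=c>0$ we choose $c'\in(0,c)\cap\mathcal{A}$: then $v_{c'}^{-}(x_{0})=\max\{u^{\wedge}(x_{0}),c'\}=c>c'$, so $x_{0}$ lies in the regular piece $J_{v_{c'}}\cap\{v_{c'}^{-}>c'\}$ already handled, and a countable union over $c'\in\mathcal{A}$ remains null. The delicate sub-case is $u^{\wedge}(x_{0})=0$: the $L^{1}$-density convergence along the ``$-$'' side forces $|B_{\rho}^{-}(x_{0})|^{-1}\int_{B_{\rho}^{-}(x_{0})}u\,dx\to 0$, and using either $u^{m}\le\|u\|_{\infty}^{m-1}u$ (for $m\ge 1$) or Jensen's inequality (for $0<m<1$), together with $|\z|\le u^{m}$, one obtains $|B_{\rho}^{-}(x_{0})|^{-1}\int_{B_{\rho}^{-}(x_{0})}|\z|\,dx\to 0$. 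Adapting the mollification computation in the proof of Lemma \ref{lematbvjump}(ii) to an interior $C^{1}$ domain $\Omega^{-}$ whose boundary carries a piece of $J_{v_{a}}$ at $x_{0}$ on the ``$-$'' side, this forces $[\z,\nu_{v_{a}}]^{-}(x_{0})=0$, hence $\Psi=0$; but then $\phi(v_{a}^{+})\le\phi(a)$, contradicting $v_{a}^{+}>a$. Combining the two sub-cases gives $\mathcal{H}^{N-1}(J_{v_{a}}\cap\{v_{a}^{-}=a\})=0$, completing the proof that $u\in DTBV^{+}(\Omega)$.
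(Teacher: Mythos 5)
Your proposal is correct, and its skeleton is the same as the paper's: since $\dive \z\in L^{\infty}(\Omega)$, the two normal traces of $\z$ agree $\haus$-a.e.\ across $J_{T_a^{\infty}(u)}$ (call the common value $\Psi$ and write $v_a:=T_a^{\infty}(u)$); by \eqref{anzellotti-caselles} together with Lemmas \ref{Ammbrdd} and \ref{lemamb} the restriction of $(\z,Dv_a)$ to $J_{v_a}$ equals $(v_a^+-v_a^-)\Psi\,\haus\res J_{v_a}$; and hypothesis \eqref{identify-zd} combined with the strict monotonicity of $\phi'$ then forbids a jump. Where you genuinely diverge is on the portion of $J_{v_a}$ with $v_a^-=a$: the paper disposes of the whole jump set in one stroke through \eqref{crt}, whose factor $\chi_{\{u>a\}}$ is precisely the delicate point there (when $u^\wedge\le a$ the minus-side trace of $\chi_{\{u>a\}}$ may vanish, so \eqref{zwboundarydjump} does not directly give the two-sided bound $|\Psi|\le\min\{(v_a^m)^+,(v_a^m)^-\}$), whereas you split that portion off, reduce the part with $u^\wedge>0$ to a smaller truncation level from a countable family, and eliminate the part with $u^\wedge=0$ by showing that the one-sided normal trace of $\z$ vanishes there, through an interior, one-sided version of the mollification argument in the proof of Lemma \ref{lematbvjump}(ii) (with Jensen's inequality replacing $u^m\le\|u\|_{L^\infty}^{m-1}u$ when $0<m<1$). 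What each route buys: the paper's is shorter and treats a.e.\ level $a$ at once; yours is heavier (one-sided traces on the $C^1$ pieces used to define $[\z,\nu_{v_a}]^{\pm}$, Lebesgue-point and tangency bookkeeping, the countable set of levels), but it makes fully explicit the degenerate portion where the truncation itself creates the lower value $a$, which the paper's write-up handles only implicitly. The steps you leave as sketches---that the localization defining the interior traces supplies the required $C^1$ domains, that $\haus$-a.e.\ point of $J_{v_a}$ on such a piece is a Lebesgue point of the trace with the correct tangent plane, and that a vanishing weak one-sided approximate limit of the bounded function $u$ forces $\fint_{\Omega^-\cap B_\rho(x_0)}u^m\,\d x\to 0$, hence $\Psi(x_0)=0$ and a contradiction with $v_a^+>a$---are routine and correctly identified, so I see no gap.
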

\begin{proof}
 Let {$T=T_a^\infty$, $a>0$,} and recall that $\phi(s)=\frac{s^{m+1}}{m+1}$. Note that $J_{T(u)}=J_{\phi(T(u))}$ and {$\nu_{T(u)}=\nu_{T(u)^{m}}$} on $J_{T(u)}$. Since $\z \in X(\Omega)$,
\begin{equation}\label{bnm}
(\dive \z) \res{{J_{T(u)}}=0}
\end{equation}
and
\begin{equation}\label{bnn}
[\z,\nu_{T(u)}]^+ \stackrel{\eqref{Tresd}}= [\z,\nu_{T(u)}]^-=:{\Psi} \quad\mbox{$\mathcal H^{N-1}$-a.e. on $J_{T(u)}$.}
\end{equation}
{Therefore, by \eqref{zwboundarydjump}, {for almost every $a>0$},
\begin{equation}\label{crt}
|\Psi|\chi_{\{u>a\}} \le \min \{(T(u)^{m})^+,(T(u)^{m})^-\} \quad\mbox{$\mathcal H^{N-1}$-a.e. on $J_{T(u)}$.}
\end{equation}
}
We have
\begin{eqnarray}
 \nonumber |D^j\phi(T(u))|& {\stackrel{\eqref{identify-zd}}\leq} & (\z,DT(u))\res{J_{T(u)}}
 \\ \nonumber
 &\stackrel{\eqref{anzellotti-caselles}}=&  (-{T(u)}^{*}\dive \z+\dive(T(u)\z))\res{J_{T(u)}}
 \\ \label{identify-zd+}
 &\stackrel{\eqref{bnm}}{=}&  \dive(T(u)\z) \res{J_{T(u)}}
\end{eqnarray}
Arguing as in the proof of Lemma \ref{cont}, Lemmas \ref{Ammbrdd}-\ref{lemamb} and \eqref{identify-zd+}
imply that $T(u)^+=T(u)^-$. Therefore, $0=\mathcal H^{N-1}(J_{T(u)})$ ${=\mathcal H^{N-1}(S_{T(u)})}$ {for almost every $a>0$}: by Lemma \ref{tbvjump}, $\mathcal H^{N-1}(S_u^*)=0$ and the proof is complete.
\end{proof}

\begin{lemma}\label{equalmeasured} Let $u\in DTBV^+(\Omega)\cap L^\infty(\Omega)$ and $\w\in X_\M(\Omega)$ be such that $\z= u^m\w\in X(\Omega)$, $\|\w\|_{\infty}\leq 1$ and {\eqref{identify-wd-new}} holds. Then {\eqref{em-tot} holds}.
\end{lemma}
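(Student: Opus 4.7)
The overall strategy is to upgrade the a priori inequality \eqref{identify-wd-new} to an equality by combining it with the trivial bound $(\w,DT_a^b(u)) \le |DT_a^b(u)|$ (coming from $\|\w\|_\infty \le 1$), after reducing everything to the truncation $T_a^b(u)$, which belongs to $DBV(\Omega)\cap L^\infty(\Omega)$ (this is where the hypothesis $u\in DTBV^+$, not merely $TBV^+$, becomes essential).

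The core identity to establish is, for a.e.\ $0<a<b\leq+\infty$,
\begin{equation}\label{plan-key}
(\z,DT_a^b(u)) = (T_a^b(u))^m (\w,DT_a^b(u)) \quad\text{as measures.}
\end{equation}
Lemma \ref{lemaauxu=0}, applied once to $\z$ and once to $\w$ (both belong to $X_\M(\Omega)$ by hypothesis), tells us that for a.e.\ such $a,b$ both $\z\chi_{\{a<u<b\}}$ and $\w\chi_{\{a<u<b\}}$ lie in $X_\M(\Omega)$, and moreover
\[
(\z,DT_a^b(u)) = (\z\chi_{\{a<u<b\}},DT_a^b(u)), \qquad (\w\chi_{\{a<u<b\}},DT_a^b(u)) = (\w,DT_a^b(u)).
\]
On the set $\{a<u<b\}$ we have the pointwise identity $u^m = (T_a^b(u))^m$, so $\z\chi_{\{a<u<b\}} = (T_a^b(u))^m\,\w\chi_{\{a<u<b\}}$ as $L^\infty$-functions. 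Since $a>0$ and $T_a^b(u)\in DBV(\Omega)\cap L^\infty(\Omega)$, the function $(T_a^b(u))^m$ is itself in $DBV(\Omega)\cap L^\infty(\Omega)$, so \eqref{eq-ms} of Lemma \ref{l-ms} applies and yields \eqref{plan-key}.

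Next, since $\phi$ is smooth on $[a,b]\subset(0,\infty)$ with $\phi'(s)=s^m$ and $T_a^b(u)\in DBV(\Omega)$ has no jump part, the standard BV chain rule gives
\[
|D\phi(T_a^b(u))| = (T_a^b(u))^m\,|DT_a^b(u)|.
\]
Feeding this together with \eqref{plan-key} into the hypothesis \eqref{identify-wd-new} (applied to $T=T_a^b\in\mathcal T$) we obtain
\[
(T_a^b(u))^m |DT_a^b(u)| \;\le\; (\z,DT_a^b(u)) \;=\; (T_a^b(u))^m (\w,DT_a^b(u)).
\]
Dividing by the strictly positive factor $(T_a^b(u))^m\ge a^m$ yields $|DT_a^b(u)|\le(\w,DT_a^b(u))$, which combined with the pointwise bound $|(\w,DT_a^b(u))|\le \|\w\|_\infty |DT_a^b(u)| \le |DT_a^b(u)|$ forces equality, establishing \eqref{em1}. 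Identity \eqref{em2} then follows by substituting \eqref{em1} into \eqref{plan-key}. The case $b=+\infty$ is recovered by choosing any $b>\|u\|_\infty$, since then $T_a^b(u)=T_a^\infty(u)$.

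The main obstacle is the first step: the factorization $\z=u^m\w$ cannot be manipulated globally because $u^m$ is not controlled where $u$ approaches zero (indeed $u\notin BV(\Omega)$ in general). The role of Lemma \ref{lemaauxu=0} is precisely to localize both pairings $(\z,DT_a^b(u))$ and $(\w,DT_a^b(u))$ to the set $\{a<u<b\}$, where the replacement of $u^m$ by $(T_a^b(u))^m$ is legitimate and \eqref{eq-ms} of Lemma \ref{l-ms} may be applied; all remaining steps are standard chain-rule manipulations.
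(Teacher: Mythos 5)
Your proof is correct and follows essentially the same route as the paper's: localize the pairings to $\{a<u<b\}$ via Lemma \ref{lemaauxu=0}, pull out the factor $(T_a^b(u))^m$ with \eqref{eq-ms} of Lemma \ref{l-ms}, use the $BV$ chain rule (valid because $\mathcal H^{N-1}(S_u^*)=0$), and force equality from $\|\w\|_\infty\le 1$. The only difference is cosmetic: you divide by $(T_a^b(u))^m\ge a^m$ to get \eqref{em1} first and then deduce \eqref{em2}, whereas the paper closes the chain of inequalities to get \eqref{em2} and reads off $\theta(\w\chi_{\{a<u<b\}},DT_a^b(u))=1$, i.e.\ \eqref{em1}, afterwards.
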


\begin{proof} Letting $T=T_a^b$, {we notice that
\begin{equation}\label{sdf}
\mbox{$u=T(u)\ $ $|DT(u)|$-a.e.}\quad\mbox{and}\quad (\w\chi_{\{a<u<b\}},D (T(u)))\ll |DT(u)|.
\end{equation}
}
Therefore
\begin{eqnarray*}
\left|D {\phi(T(u))}\right| & \stackrel{{\eqref{identify-wd-new}}}\leq &  (\z,DT(u))
\stackrel{\eqref{sd}}= (\z\chi_{\{a<u<b\}},DT(u))
\\ \nonumber &\stackrel{\eqref{eq-ms}}= & u^m (\w\chi_{\{a<u<b\}},DT(u))
\\ \nonumber & \stackrel{\eqref{sdf}}= & T(u)^m (\w\chi_{\{a<u<b\}},DT(u))
\\ & = & T(u)^m \theta(\w\chi_{\{a<u<b\}},D T(u)) |DT(u)|
\\ \nonumber &=& \theta(\w\chi_{\{a<u<b\}},D T(u))\left|{D\phi(T(u))}\right| \\\nonumber & \le & \left|{D\phi(T(u))}\right|,
\end{eqnarray*}where in the last equality we have used the fact that $\mathcal H^{N-1}(S_u^*)=0$. Hence \eqref{em2} holds and  $\theta(\w\chi_{\{a<u<b\}},D (T(u))) =1$ $|DT(u)|$-a.e., whence \eqref{em1}.
\end{proof}

\subsection{Comparison and uniqueness}
The uniqueness part of Theorem \ref{exid} is an immediate consequence of the following comparison principle.

\begin{theorem}\label{UniqEllipticd} Assume $m>0$ and $f,\overline f$ and $g,\overline{g}$ such that \eqref{hp-f-resolventd} holds.  Let  $u, \overline{u}\in DTBV^+(\Omega)\cap L^\infty(\Omega)$ be two solutions of problem \eqref{pde} with data $(f,g)$, resp. $(\overline f, \overline g)$. If $g\le \overline g$, then
$$
\int_\Omega (u - \overline{u})^+ \leq \int_\Omega (f - \overline{f})^+.
$$
{In particular, the uniqueness part of Theorem \ref{exid} holds true.}
\end{theorem}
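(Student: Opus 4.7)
The plan is to mirror the proof of Theorem \ref{comp} (the singular case), using truncations $v := T_a^b(u)$ and $\overline v := T_a^b(\overline u)$ for a.e. $0 < a < b < +\infty$ (chosen from a set of full measure so that Lemmas \ref{lematbvjump}, \ref{lemaauxu=0} and the regularity \eqref{em-tot} granted by Theorem \ref{exid} apply). By Lemma \ref{4.8} and the strict positivity of $a$, both $v$ and $\overline v$ lie in $DBV(\Omega) \cap L^\infty(\Omega)$ and are bounded away from zero. At the end one lets $a \to 0^+$ and $b \to +\infty$.

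\textbf{Setup and test function.} Subtracting \eqref{sm9d} for $u$ and $\overline u$ gives $u - \overline u - (f - \overline f) = \dive(\z - \overline\z)$ in $\mathcal D'(\Omega)$, with $\dive\z,\dive\overline\z \in L^\infty(\Omega)$. Test with $T_\eps((v - \overline v)^+) \in BV(\Omega) \cap L^\infty(\Omega)$ and apply the Green formula \eqref{Green}, splitting the right-hand side into an interior pairing and a boundary contribution.

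\textbf{Interior monotonicity.} By Lemma \ref{anz} and the diffuseness of $D(v - \overline v)$, the interior pairing localizes to $-\int_{\{0 < v - \overline v < \eps\}} d(\z - \overline\z, D(v - \overline v))$. Following the singular-case algebraic splitting $\z - \overline\z = (v^m - \overline v^m)\w + \overline v^m(\w - \overline\w)$, the first piece is bounded by $C\eps \int d|D(v - \overline v)|$ by the Lipschitz continuity of $s \mapsto s^m$ on $[a, b]$; coupled with the coarea identity \eqref{p3}, it vanishes after dividing by $\eps$. The second piece is nonpositive for the interior term (so the overall $-(\cdot)$ is bounded): using the regularity identities \eqref{em1} one has $(\w, Dv) = |Dv|$ and $(\overline\w, D\overline v) = |D\overline v|$, while $\|\w\|_\infty, \|\overline\w\|_\infty \le 1$ yields $|(\w, D\overline v)| \le |D\overline v|$ and $|(\overline\w, Dv)| \le |Dv|$, so
$$(\w - \overline\w, D(v - \overline v)) = |Dv| + |D\overline v| - (\w, D\overline v) - (\overline\w, Dv) \ge 0 \quad \text{as measures.}$$
The rigorous meaning of each pairing is ensured by Lemmas \ref{lematbvjump}(i), \ref{lemaauxu=0} and \ref{l-ms}, which allow the restriction of $\w$ and $\overline\w$ to the relevant truncation sets where they belong to $X_\M(\Omega)$, and the redistribution of $v^m, \overline v^m \in BV \cap L^\infty$ across the pairings.

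\textbf{Boundary and passage to the limit.} Dividing by $\eps$ and letting $\eps \to 0^+$ one obtains
$$\int_\Omega (u - \overline u - (f - \overline f))\, \mathrm{sign}^+_0(v - \overline v) \d x \le \int_{\partial\Omega} ([\z,\nu] - [\overline\z,\nu])\, \mathrm{sign}^+_0(v - \overline v) \d\mathcal H^{N-1}.$$
On $\{v > \overline v\} \cap \partial\Omega$, the assumption $g \le \overline g$ together with \eqref{boundconddu>g} yields $u > \overline u \ge \overline g \ge g$, so \eqref{boundcondd=} forces $[\z, \nu] = -u^m$, whereas $|[\overline\z, \nu]| \le \overline u^m$ by Lemma \ref{lematbvjump}(i) and $\|\overline\w\|_\infty \le 1$. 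Hence $[\z,\nu] - [\overline\z,\nu] \le -u^m + \overline u^m \le 0$ and the boundary contribution is nonpositive. Finally, letting $a \to 0^+$ and $b \to +\infty$, $\mathrm{sign}^+_0(v - \overline v) \to \mathrm{sign}^+_0(u - \overline u)$ outside a null set, and dominated convergence delivers $\int_\Omega (u - \overline u)^+ \le \int_\Omega (f - \overline f)^+$.

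The main obstacle is the rigorous justification of the decomposition and monotonicity in the interior step: because $\w$ and $\overline\w$ are only in $L^\infty(\Omega; \R^N)$ (not in $X_\M$), each pairing with $Dv$ or $D\overline v$ must be defined through careful localization to sets of the form $\{a < u < b\}$ or $\{a < \overline u < b\}$ via Lemmas \ref{lematbvjump} and \ref{lemaauxu=0}, and the sharp identifications \eqref{em1}--\eqref{em2} furnished by Theorem \ref{exid} are indispensable for extracting the monotonicity sign.
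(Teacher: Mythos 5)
Your overall strategy (truncate, test with $T_\eps$ of the positive part of the difference, mimic Theorem \ref{comp}) is \emph{not} what the paper does, and as written it has a genuine gap in the interior estimate. First, the identity $\z-\overline\z=(v^m-\overline v^m)\w+\overline v^m(\w-\overline\w)$ with $v=T_a^b(u)$, $\overline v=T_a^b(\overline u)$ is false: $\z=u^m\w$ and $\overline\z=\overline u^{\,m}\overline\w$, and on the localization set $\{0<v-\overline v<\eps\}$ one does \emph{not} have $u=v$ and $\overline u=\overline v$ simultaneously (e.g.\ $\overline u<a<u<a+\eps$, and also $u>b$ unless $b$ is taken above $\|u\|_\infty$). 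Consequently the key gain of a factor $\eps$ from the Lipschitz bound $|v^m-\overline v^m|\le C|v-\overline v|\le C\eps$ is unavailable for the true coefficient $u^m-\overline u^{\,m}$, which is only $O(1)$ (or $O(a^m)$) on the mismatch region; the leftover is of size $\int_{\{0<v-\overline v<\eps\}}\d|Dv|$ (resp.\ $|D\overline v|$), which the coarea argument \eqref{p3} only controls for $|D(v-\overline v)|$ and only to $o_\eps(1)$ \emph{before} dividing by $\eps$, so it does not disappear in the limit. Moreover the pairings $(\w,D\overline v)$, $(\overline\w,Dv)$ you use in the monotonicity display are not defined, since $\w,\overline\w$ are merely $L^\infty$ and not in $X_\M(\Omega)$; replacing them by $\w\chi_{\{a<u<b\}}$, $\overline\w\chi_{\{a<\overline u<b\}}$ (Lemma \ref{lematbvjump}) is mandatory, and then the decomposition of $(\z-\overline\z,D(v-\overline v))$ produces remainder terms supported where $u$ or $\overline u$ fall below the truncation level, which your proposal never estimates; note also that $u^m,\overline u^{\,m}$ themselves are not known to be in $BV(\Omega)$, so Lemma \ref{l-ms} cannot be invoked for them directly.

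These are precisely the difficulties that force the paper to a different mechanism: each equation is tested with a \emph{different, weighted} function, $T(u)\,T_{a,\eps}(u,\overline u)$ and $T(\overline u)\,T_{a,\eps}(u,\overline u)$ with $T(r)=T_a^b(r)-a$, so that the products $u^mT(u)\chi_{\{u>a\}}$, $\overline u^{\,m}T(\overline u)\chi_{\{\overline u>a\}}$ are genuine $BV\cap L^\infty$ functions and all pairings can be rearranged via Lemmas \ref{l-ms}, \ref{lematbvjump}, \ref{lemaauxu=0}, with \eqref{sdsd} giving the monotonicity sign. The price is paid at the end: after dividing by $b$ and letting $a\to0$, $b\to0$ (not $b\to+\infty$), the left-hand side carries factors $\chi_{\{u>0\}}$, $\chi_{\{\overline u>0\}}$, and an additional argument (the chain of inequalities ending in \eqref{dueinuno}, which uses \eqref{ggf} and $\overline f\ge0$) is needed to show $f=0$ a.e.\ on $\{u=0\}$ and $\overline f=0$ a.e.\ on $\{\overline u=0\}$ before the desired inequality follows. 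Your proposal bypasses this degenerate-set analysis entirely, which is a further sign that the unweighted test function does not capture the actual difficulty of the case $m>0$; the boundary estimate and the final limit $\mathrm{sign}_0^+(v-\overline v)\to\mathrm{sign}_0^+(u-\overline u)$ are fine, but the interior step needs the paper's weighting (or a genuinely new argument) to be made rigorous.
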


\begin{proof}

Let $\w$, resp. $\overline \w$, and $\z$, resp. $\overline \z$, be as in Definition \ref{defi1d} for $u$, resp. $\overline u$. In particular,
\begin{equation}
\label{obv}
u-f=\dive \z \quad\mbox{and}\quad \overline u-\bar f=\dive \overline \z \quad\mbox{in $L^\infty(\Omega)$.}
\end{equation}
{In addition, it follows from Lemmas \ref{4.8} and \ref{equalmeasured} that $u,\overline u\in DTBV^+(\Omega)$ and that \eqref{em-tot} holds for both pairs. Consequently, \eqref{em1} and Lemma \ref{lemaauxu=0} imply that
  \begin{equation}\label{sdsd}
  \left. \begin{array}{r} |D {T_a^b(u)}|=(\w\chi_{\{ a<u<b\}},D{T_a^b(u)})
  \\ |D {T_a^b(\overline u)}|=(\overline\w\chi_{\{ a<\overline u<b\}},D{T_a^b(\overline u)})
  \end{array}\right\}\quad\mbox{for a.e. $0<a<b\leq +\infty$}.
  \end{equation}
}
Given $b > a > 0$, we let
\newcommand{\Tae}{T_{a,\eps}(u,\bar u)}
\begin{equation}\label{ud-def}
T(r):=T_{a}^{b}(r)-a \quad\mbox{and}\quad \Tae:=T_\eps((T_a^\infty(u)-T_a^\infty(\overline u))_+).
\end{equation}
We multiply {\eqref{obv}$_1$} by $T(u)\Tae$ and {\eqref{obv}$_2$} by $T(\overline u)\Tae$, integrate by parts, and subtract both identities. Then,
\begin{eqnarray}
\nonumber \lefteqn{\int_{\Omega}
((u-f)T(u)-(\overline{u}-\overline f)T(\overline{u}))
\Tae \d x}
\\ \nonumber &=& -\int_{\Omega} \d((\z,
D(T(u)\Tae)- (\overline{\z}, D(T(\overline{u})\Tae)
\\ \nonumber && +\int_{\partial\Omega}\Tae\left(T(u)[\z,\nu]-T(\overline u)[\overline \z,\nu]\right) \d\mathcal H^{N-1}
\\  \nonumber &\stackrel{\eqref{f1}} =&
 -\int_{\Omega} \Tae \d ((\z,D (T (u)+a))-(\overline \z, DT(\overline u)))
 \\ \nonumber && -\int_\Omega \left( T(u)\d(\z,D\Tae)- T(\overline u) \d(\overline \z,D\Tae)\right)
\\ \nonumber && +
\int_{\partial\Omega}\Tae\left(T(u)[\z,\nu]-T(\overline u)[\overline \z,\nu] \right) d\mathcal H^{N-1}
\\ \label{i} &=:&I_1+I_2+I_3.
\end{eqnarray}
As to $I_1$, we have
\begin{equation}\label{i1}
I_1 \stackrel{{\eqref{em2}}}\le \int_{\Omega} \Tae \d (\overline \z, DT(\overline u))\stackrel{\eqref{ud-def}}\le \eps\into \d (\overline \z, DT(\overline u)).
\end{equation}
As to $I_2$, by Lemma \ref{l-ms} and since $\z T(u)=\w u^mT(u)\chi_{\{u>a\}}$, we have
\begin{eqnarray*}
T(u)(\z,D\Tae)
& = & (\z T(u),D\Tae) \\\nonumber
&=&T(u)u^m(\w\chi_{\{u>a\}},D\Tae).
\end{eqnarray*}
Similarly,
$$
T(\overline u)(\overline\z,D\Tae)
=T(\overline u)\overline u^m(\overline \w\chi_{\{\overline u>a\}},D\Tae).
$$
Then, since $\mathcal H^{N-1}(J_{T(u)u^m}^*)=0$ and  $(\overline \w\chi_{\{\overline u>a\}},D\Tae) {\ll} |D\Tae|$, we can add and subtract $T(u)u^m\d(\overline \w\chi_{\{\overline u>a\}},D\Tae)$ to $I_2$  to get
\begin{eqnarray}
\nonumber I_2 &= &
 -\int_\Omega T(u)u^m\d(\w\chi_{\{u>a\}}-\overline \w\chi_{\{\overline u>a\}},D\Tae)
\\ \nonumber && + \int_\Omega (T(u)u^m-T(\overline u)\overline u^m)\d (\overline \w\chi_{\{\overline u>a\}},D\Tae)
\\ \label{i2}
& =:&I_{2,1} + I_{2,2}.
\end{eqnarray}

As to $I_{2,1}$, {using Lemma \ref{lematbvjump} we deduce that both $\w\chi_{\{u>a\}}$ and $\overline \w\chi_{\{\overline u>a\}}$ belong to  $X_{\mathcal{M}}(\Omega)$}, so that  we have
\begin{eqnarray*}
\lefteqn{(\w\chi_{\{u>a\}}-\overline \w\chi_{\{\overline u>a\}},D\Tae)}
\\
&\stackrel{\eqref{composition2}}=& \chi_{\{0<T_a^\infty(u)-T_a^\infty(\bar u)<\eps\}} (\w\chi_{\{u>a\}}-\overline \w\chi_{\{\overline u>a\}},D(T_a^\infty(u)-T_a^\infty(\bar u)))
\end{eqnarray*}
and
\begin{eqnarray*}
\lefteqn{(\w\chi_{\{u>a\}}-\overline \w\chi_{\{\overline u>a\}},D(T_a^\infty(u)-T_a^\infty(\bar u)))}
\\
&\stackrel{{\eqref{sdsd}}}=& |DT_a^\infty(u)|+|DT_a^\infty(\bar u)|-(\w\chi_{\{u>a\}},DT_a^\infty(u))
-(\overline \w\chi_{\{\overline u>a\}},DT_a^\infty(\bar u))
\\ &\ge& 0 \quad\mbox{(since $\|\w\|\le 1$ and $\|\bar \w\|\le 1$)},
\end{eqnarray*}
hence
\begin{equation}\label{i21}
I_{2,1}\le 0.
\end{equation}
As to $I_{2,2}$, {again in view of Lemma  \ref{lematbvjump}}, we have
\begin{eqnarray*}
\lefteqn{\left|(\overline \w\chi_{\{\overline u>a\}},D\Tae)\right|}
\\ &\stackrel{\eqref{composition2}}= & \chi_{\{0<T_a^\infty(u)-T_a^\infty(\bar u)<\eps\}} \left|(\overline \w\chi_{\{\overline u>a\}},D(T_a^\infty(u)-T_a^\infty(\bar u)))\right| \\ & \le & \chi_{\{0<T_a^\infty(u)-T_a^\infty(\bar u)<\eps\}} |D(T_a^\infty(u)-T_a^\infty(\bar u))|
\end{eqnarray*}
and
\begin{eqnarray*}
\lefteqn{\chi_{\{0<T_a^\infty(u)-T_a^\infty(\bar u)<\eps\}}
|T(u)u^m-T(\overline u)\overline u^m|}
\\
&=& \left\{\begin{array}{ll} |u^{m}(u-a)-\bar u^{m}(\bar u-a)| & \mbox{ if }\ u>a,\ \bar u>a, \ 0<u-\bar u<\eps
\\
|u^{m}(u-a)| &\mbox{ if }\  u>a ,\ \bar u<a,\ 0<u-a<\eps
\\
0 & \mbox{otherwise}.
\end{array}\right.
\\ &\le& C(a)\eps.
\end{eqnarray*}
Therefore, by the coarea formula,
\begin{equation}\label{i22}
I_{2,2} \le C(a)\eps \int_\Omega  \chi_{\{0<T_a^\infty(u)-T_a^\infty(\bar u)<\eps\}} \d|D(T_a^\infty(u)-T_a^\infty(\bar u))| =\eps o_\eps(1)
\end{equation}
as $\eps\to 0$. Combining \eqref{i1}, \eqref{i2}, \eqref{i21}, and \eqref{i22}, dividing \eqref{i} by $\eps$, and passing to the limit as $\eps\to 0$, we obtain
\begin{eqnarray}
\nonumber \lefteqn{\int_{\Omega}
((u-f)T(u)-(\overline{u}-\overline f)T(\overline{u}))
{\rm sign }_0^+(T_a^\infty(u)-T_a^\infty(\bar u)) \d x}
\\ \nonumber &\le &
 \int_{\Omega} (\overline \z, DT(\overline u)))
 \\ \label{ib} &+& \int_{\partial\Omega}{\rm sign }_0^+(T_a^\infty(u)-T_a^\infty(\bar u))\left(T(u)[\z,\nu]-T(\overline u)[\overline \z,\nu] \right) d\mathcal H^{N-1}.
\end{eqnarray}
The boundary integral in \eqref{ib} is non-positive: indeed, $T_a^\infty(u)-T_a^\infty(\bar u)>0$ implies $u>a$ and $u>\bar u$, and $u>\bar u$ implies $u>g$ since $g\le \bar g \le \bar u$. Therefore
\begin{eqnarray*}
\lefteqn{
T(u)[\z,\nu]-T(\overline u)[\overline \z,\nu] { \stackrel{\eqref{ud-def}}=  T(u)[\z,\nu]\chi_{\{u>a\}}-T(\overline u)[\overline \z,\nu]\chi_{\{\overline u>a\}}}
}
\\ &\stackrel{\eqref{zwboundaryd}}=&
T(u)(T_a^{\infty}(u))^{m}[\w\chi_{\{a<u<b\}},\nu]-T(\overline u)(T_a^{\infty}(\overline u))^{m}[\overline \w\chi_{\{a<u<b\}},\nu]
\\
&\stackrel{\eqref{boundcondd=},\ {g<u}}\le & -T(u)(T_a^{\infty}(u))^{m}+ T(\overline u)(T_a^{\infty}(\overline u))^{m} \ \stackrel{u>\bar u}\le \ 0.
\end{eqnarray*}
Hence, dividing \eqref{ib} by $b$ and passing to the limit as $a\to 0$ and $b \to 0$ (in this order), we obtain
\begin{equation}\label{ic}
 \int_{\Omega} ((u-f)\chi_{\{u>0\}}-(\overline{u}-\overline f)\chi_{\{\bar u>0\}})
{\rm sign }_0^+(u- \bar u) \d x
\le  \lim_{b\to 0}\frac{1}{b}\left(\lim_{a\to 0}\int_{\Omega} (\overline \z, DT(\overline u))\right).
\end{equation}
Let now $\tilde T(u)=T_a^b(u)-b$. We notice that
\begin{equation}\label{ggg}
D\tilde T(\bar u)=DT(\bar u), \quad \frac{1}{b}\tilde T(s)\stackrel{a\to 0}\rightarrow \frac{1}{b}( T_0^b(s)-b) \stackrel{b\to 0}\rightarrow -\chi_{\{s\le 0\}}
\end{equation}
and that
\begin{eqnarray*}
\nonumber
0 &\stackrel{{\eqref{em2}}}\le & \int_\Omega \d (\bar\z, DT(\bar u))\stackrel{\eqref{ggg}_1}=\int_\Omega \d (\bar\z, D\tilde T(\bar u))
\\ \nonumber
&\stackrel{\eqref{defmeasx144},\eqref{obv}_2}= & \int_{\partial\Omega} \tilde T(\bar u) [\bar \z,\nu] \d\mathcal H^{n-1}-\int_\Omega \tilde T(\bar u) (\bar u-\bar f).
\end{eqnarray*}
Therefore
\begin{eqnarray}
\nonumber
0 &\stackrel{{\eqref{em2}}}\le & \lim_{b\to 0}\frac{1}{b}\left(\lim_{a\to 0} \int_\Omega \d (\bar\z, DT(\bar u))\right)
\\ \nonumber
&\stackrel{\eqref{ggg}_2}=& -\int_{\partial\Omega} \chi_{\{\bar u=0\}} [\bar \z,\nu] \d\mathcal H^{n-1} +\int_\Omega \chi_{\{\bar u=0\}} (\bar u-\bar f)
\\ \label{dueinuno} &\stackrel{\eqref{ggf}} =& -\int_\Omega \chi_{\{\bar u=0\}} \bar f.
\end{eqnarray}
Since $\bar f\ge 0$, the chain of inequalities in \eqref{dueinuno} implies that
$$
\bar f=0 \quad\mbox{ a.e. on $\{\bar u=0\}$}\quad\mbox{and}\quad
0=\lim_{b\to 0}\frac{1}{b}\left(\lim_{a\to 0}\int_\Omega \d (\bar\z, DT(\bar u))\right).
$$
Analogously, we of course obtain that $f=0$ a.e. on $\{u=0\}$. Therefore \eqref{ic} may be rewritten as
$$
\int_{\Omega} (u-\overline{u}) {\rm sign }_0^+(u- \bar u) \d x
\le \int_{\Omega} (f-\overline f) {\rm sign }_0^+(u- \bar u) \d x \le \int_{\Omega} (f-\overline f)^+ \d x
$$
and the proof is complete.
\end{proof}

\begin{remark}
A supersolution $\bar u$ of \eqref{pde} for $m>0$ may be defined as a function which satisfies all properties in Definition \ref{defi1d} besides \eqref{sm9d}, which is replaced by
$$
\bar u -f \ge \dive \bar \z \in L^\infty(\Omega),
$$
and \eqref{boundcondd=}, which is removed. With this definition, the proof of Theorem \ref{UniqEllipticd} continues to hold and yields $\bar u\ge u$. On the other hand, a subsolution $\underline u$ of \eqref{pde} may be defined as a function which satisfies all properties in Definition \ref{defi1d} besides \eqref{sm9d}, which is replaced by
$$
\underline u -f \le \dive \underline \z \in L^\infty(\Omega),
$$
and $\underline u \ge g$, which is removed. With this definition, the proof of Theorem \ref{UniqEllipticd} (with $u$ replaced by $\underline u$ and $\bar u$ replaced by $u$) continues to hold and yields $\underline u\le u$. Thus, as to the boundary conditions, supersolutions require only that $\bar u\ge g$ on $\partial\Omega$, whereas subsolutions require only that \eqref{boundcondd=} holds.

In the singular case $m<0$, analogous considerations lead to suitable definitions of sub and supersolutions for problem \eqref{pde}, for which the proof of the comparison principle stated in Theorem \ref{comp} continues to hold: in this case, supersolutions are only required to satisfy \eqref{boundcond=}, while subsolutions are only required to satisfy $\underline{u}\leq g$ on $\partial\Omega$.
\end{remark}

\section{Qualitative properties}\label{S-qp}

In this section we highlight some qualitative features of solutions to \eqref{pde}. Our interest is primarily concerned with their behavior as the boundary value $g$ becomes large.
As our analysis is based on comparison, we begin with a few examples of explicit solutions: in particular, constant solutions (which may not attain the boundary values) are given in (i) below; these coincide with solutions with large boundary values for $m<0$, whereas solutions with large boundary values for $m>0$ are given in (ii)-(iv).

\begin{lemma}\label{L-explicit}
Let $\Omega=B_R(0)$ for some $R>0$ and let $u$ be the  solution to \eqref{pde} with data $f=F\in [0,+\infty)$ and $g={G}\in {[}0,+\infty)$.
\begin{itemize}
\item[(i)]  If $m<0$, then $u={U}$ for all $G \geq U$, where $U{\in (0,+\infty)}$ is defined by $U-F = U^mN/R$. {If $m>0$, then $u=U$ for all $G\le U$, where $U\in [0,+\infty)$ is defined by $U+U^mN/R=F$.}

\smallskip

\item[(ii)] If $0<m<1$ and $G>F$ is sufficiently large, then
\begin{equation}
  \label{guess1stex01}u(x)=h(r)\chi_{B_r(0)}(\rho) +h(\rho)\chi_{\Omega\setminus B_r(0)}(\rho),\quad \rho:=\|x\|,
\end{equation}
where {$h\in C^1([r,R])$, positive and increasing, is the unique} solution to
\begin{equation}\label{B1101}
\left\{\begin{array}{l}
{m\dfrac{\d h}{\d \rho}=h^{1-m}(h-F)- (N-1)\dfrac{h}{\rho}}
\\[2ex]
h(R)=G
\end{array}\right.
\end{equation}
and $r\in (0,R)$ is the unique solution to
\begin{equation}\label{HN}
H_N(r):= h(r)-F-\frac{h^m(r)N}{r}=0.
\end{equation}

\item[(iii)] If $m=1$, $F=0$, and $R>N$, then
\begin{equation}\label{expl1}
u(x)=\left\{\begin{array}
  {ll} \displaystyle G \left(\frac{R}{N}\right)^{N-1} e^{N-R} & \quad {\rm if \ } \rho<N \\[2ex] \displaystyle G \left(\frac{R}{\rho}\right)^{N-1} e^{\rho-R} & \quad {\rm if \ } N<\rho<R.
  \end{array}\right.
\end{equation}

\item[(iv)] If $m>1$, $F=0$, and $G$ is sufficiently large, then $u=G$.

\item[(v)] If $m>1$, $N=1$, $F=0$, and $G<\left(\frac{m-1}{m}R\right)^{1/(m-1)}$ is sufficiently small, then
    $$
    u(x)=\left(G^{m-1}+\frac{1-m}{m}(R-\rho)\right)_+^\frac{1}{m-1}.
    $$

\end{itemize}

\end{lemma}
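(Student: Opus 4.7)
The plan is to verify, for each item, that the stated formula together with a carefully chosen radial gradient-director field $\w$ satisfies Definition \ref{defi1} (if $m<0$) or Definition \ref{defi1d} (if $m>0$); uniqueness is then immediate from Theorems \ref{comp} and \ref{UniqEllipticd}. Throughout, I will seek $\w(x)=\sigma(\rho)x/\rho$ with $\rho=|x|$, for which $\dive\w=\sigma'(\rho)+(N-1)\sigma(\rho)/\rho$ and $[\w,\nu]=\sigma(R)$ on $\partial B_R(0)$, so that the admissibility $|\w|\le 1$ reduces to $|\sigma|\le 1$; the cell constraint \eqref{identify-w-new}/\eqref{identify-wd-new} is trivial when $Du=0$ and holds with equality whenever $\w$ aligns with $\nabla u/|\nabla u|$.

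For (i), with $u\equiv U$, I take $\sigma(\rho)=\varepsilon\rho/R$ where $\varepsilon=+1$ if $m<0$ and $\varepsilon=-1$ if $m>0$: then $\dive(U^m\w)=\varepsilon U^mN/R=U-F$ by the algebraic equation defining $U$, and $[\z,\nu]=\varepsilon U^m$ matches \eqref{boundcond=}/\eqref{boundcondd=} when $G\neq U$ and is vacuous otherwise. For (iv) with $u\equiv G$ and $F=0$, taking $\sigma(\rho)=G^{1-m}\rho/N$ gives $\dive\z=G^m\dive\w=G=u-f$, and $|\sigma(R)|=G^{1-m}R/N\le 1$ holds for $G\ge(R/N)^{1/(m-1)}$, finite since $m>1$; the boundary condition $u\ge g$ holds with equality, so \eqref{boundcondd=} imposes no constraint.

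For (ii), on the annulus $r<\rho<R$ I set $\sigma\equiv 1$, so $\z=h(\rho)^m x/\rho$ and a direct computation gives $\dive\z=mh^{m-1}h'+(N-1)h^m/\rho$; the PDE $\dive\z=h-F$ is then exactly the ODE \eqref{B1101}, and $u=g$ on $\partial\Omega$ reduces to the boundary condition $h(R)=G$. On the core $B_r(0)$, I set $\sigma(\rho)=\rho/r$, so that $|\w|\le 1$, the normal traces of $\w$ match continuously across $\partial B_r$, and $\dive\z=h(r)^m N/r$, which equals $h(r)-F$ precisely when \eqref{HN} holds. The sign $h'>0$ required on the annulus (so that $\w=x/\rho$ is genuinely the gradient direction) is automatic wherever $H_N>0$, since then $h-F>h^mN/\rho>(N-1)h^m/\rho$. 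Part (iii) is the explicit integration of \eqref{B1101} with $m=1$ and $F=0$: the linear equation $(\log h)'=1-(N-1)/\rho$ integrates to $h(\rho)=G(R/\rho)^{N-1}e^{\rho-R}$, and \eqref{HN} becomes $h(r)(1-N/r)=0$, giving the unique root $r=N\in(0,R)$ provided $R>N$.

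For (v), the stated $u$ has a dead core $[-\rho_0,\rho_0]$ with $\rho_0=R-\tfrac{m}{m-1}G^{m-1}\in(0,R)$ by the smallness hypothesis on $G$; on $(\rho_0,R]$ the function $u$ is $C^1$ and strictly increasing, so I take $\w(x)=\operatorname{sign}(x)$ and $\z=u^m\operatorname{sign}(x)$. A direct differentiation gives $(u^m)'=mu^{m-1}u'=u$ iff $u'=u^{2-m}/m$, which is exactly the differential form of the stated formula; hence $\dive\z=u-f$. On the dead core $\z\equiv 0$ and both sides vanish; $\z$ is continuous across $\pm\rho_0$ because $u(\pm\rho_0)=0$; and the boundary condition holds with equality $u(\pm R)=G$. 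The main obstacle in the whole lemma lies in (ii): one must show that for $G$ sufficiently large, the backward initial-value problem \eqref{B1101} at $\rho=R$ admits a positive, increasing $C^1$-solution on an interval $(r,R]$ for some $r\in(0,R)$, and that $H_N$ vanishes at $r$. I would analyze $H_N(\rho)$ as $\rho$ decreases from $R$: one has $H_N(R)=G-F-G^m N/R>0$ for $G$ large since $m<1$, while along the flow $h$ remains bounded and the singular term $h^m N/\rho$ eventually dominates $h-F$, forcing $H_N$ to cross zero; the flow is continued up to this first crossing, which defines $r$, and uniqueness of $r$ follows from monotonicity of $H_N$ along the trajectory.
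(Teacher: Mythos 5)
Your construction is correct and, for items (i), (iii), (iv) and (v), it is essentially the paper's own proof: the same radial fields $\w$ (up to the irrelevant choice of $\w$ on the dead core in (v), where $u^m\w=0$ anyway), the same observation that Lipschitz continuity of $u$ reduces \eqref{identify-w-new}/\eqref{identify-wd-new} to $(\w,Du)=|Du|$, and identification of $u$ via the comparison principles of Theorems \ref{comp} and \ref{UniqEllipticd}. The only genuine divergence is the ODE analysis behind (ii). The paper treats $N=1$ and $N>1$ separately: for $N>1$ it studies $h$ on all of $(0,R]$, shows $h>F$, that $h$ has a unique interior minimum $\rho_{min}$ with $H_N(\rho_{min})<0<H_N(R)$, and obtains uniqueness of the zero of $H_N$ from the identity $H_N'=H_N\bigl(\frac{h^{1-m}}{m}-\frac{N}{\rho}\bigr)+\frac{h}{m\rho}$, which gives $H_N'\ge\frac{h}{m\rho}>0$ wherever $H_N\ge 0$. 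You instead integrate \eqref{B1101} backward from $\rho=R$ and let $r$ be the first point where $H_N$ vanishes; this handles all $N\ge1$ at once and avoids studying $h$ below $r$, at the price of two points you should make explicit. First, when $F=0$ the claim that ``$h^mN/\rho$ eventually dominates $h-F$'' needs the rearrangement: $H_N>0$ is equivalent to $h>(N/\rho)^{1/(1-m)}$, which simultaneously provides the uniform positive lower bound on $h$ that keeps the backward flow well defined (the right-hand side of \eqref{B1101} is locally Lipschitz only for $h>0$) and contradicts $h\le G$ for small $\rho$, so $H_N$ must vanish at some $r>0$ with $h(r)>0$; for $F>0$ the bound $h>F$ plays the same role. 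Second, ``uniqueness of $r$ follows from monotonicity of $H_N$ along the trajectory'' is not literally true, since $H_N$ need not be monotone where it is negative: either you content yourself with uniqueness of the zero on $[r,R]$, which is automatic from your first-crossing definition, or, for uniqueness on the whole interval of existence as in the paper, you should invoke the identity above, which shows that every zero of $H_N$ is a strict up-crossing, hence there is at most one. With these two clarifications your proposal is complete, and arguably more streamlined than the paper's case-split argument for (ii).
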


\begin{proof} Throughout the proof, primes denote differentiation with respect to the radial variable $\rho$. {Since all functions $u$ in $(i)$-$(v)$ are Lipschitz continuous, conditions \eqref{identify-w-new} and \eqref{identify-wd-new} are in fact equivalent to
\begin{equation}
\label{identify-w-6}
(\w,Du)=|Du|.
\end{equation}
}

(i). If $m<0$, let $u=U$ and $\w(x)=x/R$. Then $\z(x)= U^m x/R$ and $\dive \z= U^mN/R$, so that $u-F = \dive \z$ by the choice of $U$. Condition {\eqref{identify-w-6}} is obviously true. Finally, $[\z,\nu]= \frac{U^{m}x}{R}\cdot \frac{x}{R}= U^{m}$, hence  the boundary condition holds whenever $G\ge U$. The case $m>0$ is analogous, choosing $\w(x)=-x/R$.

\smallskip

(ii).   Recall here $0<m<1$; we look for a solution of the form \eqref{guess1stex01} with ${0< r< R}$ and $h\in C([r,R])$ nonnegative, nondecreasing and such that
$G=h(R)$.
We define $\w,\z\in X(\Omega)$ by
\begin{equation}\label{def-w-z}
\w(x):=\left\{\begin{array}{ll}\displaystyle\frac{x}{r} & {\rm if \ } \rho<r \\[2ex] \displaystyle\frac{x}{\rho} & {\rm if \ } \rho>r \end{array}\right.,\quad \z(x):=u^{m}\w=\left\{\begin{array}{ll}\displaystyle\frac{h^m(r) x}{r} & {\rm if \ } \rho<r \\[2ex] \displaystyle\frac{h^m(\rho)x}{\rho} & {\rm if \ } \rho>r. \end{array}\right.
\end{equation}
Then
$$
\dive\z=\left\{\begin{array}{ll}\displaystyle\frac{h^m(r) N}{r} & {\rm if \ } \rho<r \\[2ex] \displaystyle mh^{m-1}(\rho) {h'(\rho)} + h^m(\rho)\frac{N-1}{\rho} & {\rm if \ } \rho>r\,. \end{array}\right.
$$
Condition {\eqref{identify-w-6}} holds since
\begin{equation}\label{verid}
(\w,Du)=\left\{\begin{array}{ll} 0=|Du| & \mbox{if $\rho<r$} \\ {\frac{x}{\rho}\cdot h'(\rho)\frac{x}{\rho} = h'(\rho)} =|Du| & \mbox{if $\rho>r$}\,.\end{array}\right.
\end{equation}

The condition $h(R)=G$ in \eqref{B1101} implies that  $u=g$ on $\partial\Omega$, hence the boundary condition \eqref{boundcondd} holds.
The other conditions in \eqref{B1101} and  \eqref{HN}  implies that  $\dive \z ={u}-F$. It remains to check that  $h$ and $r$ exist and are unique. We discuss the cases $N=1$ and $N>1$ separately.

\smallskip

{\it Case $N=1$.} Since $G>F$, \eqref{B1101} has a unique solution $h$ in $(-\infty,R]$,  with $h$ increasing and $h(\rho)\to F$ as $\rho\to -\infty$ (observe that $h$ lies above the stationary solution $F$). Since $H_1(\rho)\to -\infty$ as $\rho\to 0^+$ and $H_1(R)>0$ for $G$ sufficiently large (recall that $m<1$), \eqref{HN} has a solution. Uniqueness of $r$ will be shown below for any $N\ge 1$.

\smallskip

{\it Case $N>1$.} We will argue that there exists a unique solution $h$ to \eqref{B1101} in $(0,R]$ with the following properties:
\begin{itemize}
\item[$(a)$] $h'(R)>0$;
\item[$(b)$] $h>F$ in $(0,R)$;
\item[$(c)$] $h$ has a unique minimum point $\rho_m\in (0,R)$.
\end{itemize}

$(a)$ follows immediately from \eqref{B1101} choosing $G$ sufficiently large (in particular, $G>F$). $(b)$ follows by contradiction: let $\rho_{0}$ be the closest point to $R$ at which $h(\rho)=F$; if $F>0$, by \eqref{B1101} we have $h'(\rho_{0})<0$ which, together with the fact that $h(R)=G>F$, contradicts the definition of $\rho_{0}$; if $F=0$ then $h$ is identically zero, in contradiction with the condition $h(R)=G$. In order to show $(c)$, assume by contradiction that $h'>0$ in $(0,R)$. Then we would have
$$
h^{1-m}\ge h^{-m}(h-F)>\frac{N-1}{\rho} \to +\infty \quad \mbox{as $\rho\to 0^+$},
$$
a contradiction. Therefore at least one point $\rho_{min}\in (0,R)$ exists with

\begin{equation}\label{gr1}
h'(\rho_{min})=0, \quad\mbox{that is,}\quad h(\rho_{min})-F=\frac{(N-1)h^m(\rho_{min})}{\rho_{min}}.
\end{equation}
Differentiating \eqref{B1101} and using \eqref{gr1}, one sees that $h''(\rho)>0$ at any point in which $h'(\rho)=0$. Therefore $\rho_{min}$ is unique and $h'(\rho)<0$  for $\rho\in (0,\rho_{min})$.
Since $H_N(\rho_{min})<0$ and $H_N(R)>0$ for $G$ sufficiently large (recall that $m<1$), there exists $r\in (\rho_{min},R)$ such that $H_N(r)=0$.

\smallskip

In order to show now that $r$ (the zero of $H_N$) is unique, we can reunify the cases $N=1$ and $N\ge 1$.  We have that
$$
H_N'(\rho)=h'(\rho)-\frac{N}{\rho}H_N(\rho)=H_N(\rho) \left(\frac{h^{1-m}}{m}-\frac{N}{\rho}\right) +\frac{h}{{m}\rho}\,.
$$
Then, since
$$
H_{N}(\rho)\leq h-\frac{Nh^{m}}{\rho}= mh^{m}\left(\frac{h^{1-m}}{m}-\frac{N}{m\rho}\right)<mh^{m} \left(\frac{h^{1-m}}{m}-\frac{N}{\rho}\right),
$$
it holds that
$$
H_N'(\rho)\geq {\frac{h}{m\rho}}>0 \quad {\rm if \ }H_N(\rho){\ge} 0.
$$
Therefore, there exists a unique $r\in (0,R)$ such that $H_N(r)=0$.

\smallskip

(iii). As in (ii), we look for a solution of the form \eqref{guess1stex01} with ${0 <r< R}$ and $h\in C([r,R])$ nonnegative, nondecreasing and such that $G=h(R)$.
We define $\w,\z\in X(\Omega)$ as in \eqref{def-w-z} and, as in (ii), we obtain that $h(r)=\frac{h(r) N}{r}$, i.e. $r=N$, and that $h$ satisfies
\begin{equation}\label{B11}
h= h' +(N-1)\frac{h}{\rho}\,,\quad N<\rho<R.
\end{equation}
The solution to \eqref{B11} can be computed explicitly, leading to \eqref{expl1}. Condition {\eqref{identify-w-6}} holds ({cf. \eqref{verid} and} note that $h$ is nondecreasing) and $u=G$ on $\partial\Omega$, hence \eqref{boundcondd} holds.

\smallskip

(iv). Let $\w(x)=x\frac{G^{1-m}}{N}$ with $G^{m-1}\ge \frac{R}{N}$, so that $\|\w\|_\infty\le 1$. Then $\z(x)= u^m(x) \w(x) = Gx$, so that $\dive \z =G=u$. Condition {\eqref{identify-w-6}} is obviously true and the boundary datum is attained.

\smallskip

(v). It suffices to define $r= R-\frac{m}{m-1}G^{m-1}$ and $\w,\z\in X(\Omega)$ by
\begin{equation*}
\w(x):=\left\{\begin{array}{ll}\displaystyle\frac{x}{r} & {\rm if \ } \rho<r \\[2ex] \displaystyle {\frac{x}{\rho}} & {\rm if \ } \rho>r \end{array}\right.,\quad \z(x):=u^{m}\w=\left\{\begin{array}{ll}\displaystyle 0 & {\rm if \ } \rho<r \\[2ex] \displaystyle h^m(\rho){\frac{x}{\rho}} & {\rm if \ } \rho>r \end{array}\right.
\end{equation*}
and to argue as in item (ii) {(with $F=0$ and $N=1$)}.
\end{proof}

We now draw a few consequences based on comparison. In the (scaling-wise) linear and super-linear case, $m\ge 1$, solutions blow-up uniformly in the whole domain as the boundary datum becomes large. In particular, no nontrivial large solution {can exist}.

\begin{proposition}
Let $m\ge 1$ and let $\Omega=B_R(0)$ for some $R>0$, with $R>N$ if $m=1$. Let $ f\in L^\infty(\Omega)$ nonnegative and $g_G\in L^\infty(\partial\Omega)$ such that $g_G\ge G$. Then the  solutions $u_G$ of \eqref{pde} with data $f$ and $g_G$ are such that
$$
u_G(x)\to + \infty \quad\mbox{for all $x\in B_R(0)$ as $G\to +\infty$.}
$$
\end{proposition}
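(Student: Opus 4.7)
The strategy is straightforward comparison against the explicit barriers constructed in Lemma \ref{L-explicit}. For each sufficiently large $G$, let $v_G$ denote the explicit solution of \eqref{pde} on $B_R(0)$ corresponding to data $(\tilde f, \tilde g) = (0, G)$ provided by Lemma \ref{L-explicit}(iii) when $m=1$ and by Lemma \ref{L-explicit}(iv) when $m>1$. Since $f\ge 0 = \tilde f$ and $g_G\ge G = \tilde g$ by hypothesis, the comparison principle of Theorem \ref{UniqEllipticd} (applicable because $m\ge 1>0$) gives
\[
\int_\Omega (v_G - u_G)^+\,dx \le \int_\Omega (\tilde f - f)^+\,dx = 0,
\]
so that $v_G\le u_G$ a.e.\ in $B_R(0)$.

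It then suffices to show that $v_G\to +\infty$ uniformly on $B_R(0)$ as $G\to +\infty$. In the super-linear case $m>1$, Lemma \ref{L-explicit}(iv) yields $v_G\equiv G$ on $B_R(0)$ as soon as $G^{m-1}\ge R/N$, and the conclusion is immediate. In the linear case $m=1$ (where $R>N$ by assumption), Lemma \ref{L-explicit}(iii) gives $v_G(x) = G\,\psi(|x|)$, where
\[
\psi(\rho) := \left(\frac{R}{N}\right)^{\!N-1}\!e^{\,N-R}\,\chi_{[0,N]}(\rho) + \left(\frac{R}{\rho}\right)^{\!N-1}\!e^{\,\rho-R}\,\chi_{[N,R]}(\rho)
\]
is a positive continuous function on $[0,R]$ independent of $G$; setting $c_0:=\min_{[0,R]}\psi>0$, we get $v_G\ge c_0 G$ on $B_R(0)$. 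In both cases $u_G\ge v_G\to +\infty$ uniformly on $B_R(0)$.

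There is essentially no obstacle: the only points requiring care are (i) checking that the sign conditions in the comparison principle point in the right direction, i.e.\ that taking $\tilde f=0\le f$ and $\tilde g=G\le g_G$ yields $v_G\le u_G$ (not the reverse), and (ii) verifying for $m=1$ that the radial profile $\psi$ does not degenerate to zero on the annulus $\{N\le|x|\le R\}$, which is clear by continuity since $\psi(R)=1$ and $\psi(N)=(R/N)^{N-1}e^{N-R}>0$. Both the construction of $v_G$ and the applicability of the comparison principle are already granted by the earlier results of the paper, so the argument reduces to the simple two-line chain $u_G(x)\ge v_G(x)\ge c_0 G\to +\infty$.
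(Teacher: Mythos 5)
Your proof is correct and follows essentially the same route as the paper: reduce to the data $(0,G)$ via the comparison principle of Theorem \ref{UniqEllipticd} and then invoke the explicit solutions of Lemma \ref{L-explicit}(iii)--(iv), which blow up as $G\to+\infty$. Your write-up merely spells out the comparison inequality and the lower bound on the radial profile more explicitly than the paper does.
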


\begin{proof}
In view of the comparison tool given by Theorem \ref{UniqEllipticd}, it suffices to prove the statement for $f=0$ and $g_G=G$. In this case solutions are explicitly given by Lemma \ref{L-explicit}(iii)-(iv), whence the result.
\end{proof}

On the contrary, in the (scaling-wise) singular case, $m<0$, solutions are bounded independently of their boundary value:

\begin{proposition}\label{L-q<0}
Let $m<0$ and let $\Omega=B_R(0)$ for some $R>0$. Let $f\in L^\infty(\Omega)$ and let $U$ be defined by $U-\|f\|_{L^{\infty}(\Omega)} = U^mN/R$. Then
$$
\|u\|_{L^{\infty}(\Omega)} \le U
$$
 for any nonnegative $g\in L^\infty(\partial\Omega)$, where $u$ is the  solution $u$ of \eqref{pde} with data $f$, $g$.
\end{proposition}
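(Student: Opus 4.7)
The plan is simply to apply the explicit constant solution from Lemma~\ref{L-explicit}(i) as a barrier and then invoke the $L^1$-contraction principle of Theorem~\ref{comp}. Since $m<0$, the map $s\mapsto s-s^{m}N/R$ is strictly increasing from $-\infty$ to $+\infty$ on $(0,+\infty)$, so the scalar equation $U-\|f\|_{L^{\infty}(\Omega)}=U^{m}N/R$ has a unique positive solution; this takes care of the well-definedness of $U$.

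First I would set $F:=\|f\|_{L^{\infty}(\Omega)}$ and choose a constant boundary datum $G:=\max\{\|g\|_{L^{\infty}(\partial\Omega)},U\}$, so that $G\geq U$. By Lemma~\ref{L-explicit}(i), the constant function $\bar u\equiv U$ is a solution of \eqref{pde} with (constant) data $(F,G)$ in the sense of Definition~\ref{defi1}. By construction $f\leq F$ a.e.\ in $\Omega$ and $g\leq G$ $\haus$-a.e.\ on $\partial\Omega$, so the hypotheses of the contraction principle are satisfied.

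Applying Theorem~\ref{comp} with $u$ (data $(f,g)$) and $\bar u$ (data $(F,G)$) then yields
\[
\int_{\Omega}(u-U)^{+}\d x \;\leq\; \int_{\Omega}(f-F)^{+}\d x \;=\;0,
\]
so $u\leq U$ a.e.\ in $\Omega$. Combined with the nonnegativity of $u$ built into Definition~\ref{defi1}, this gives $\|u\|_{L^{\infty}(\Omega)}\leq U$, as required. There is no substantive obstacle: once Lemma~\ref{L-explicit}(i) is in hand, the statement is a one-line consequence of comparison. The only subtle point worth making explicit is that the barrier $\bar u\equiv U$ must be compared against $u$ with a boundary datum at least as large as both $g$ and $U$, which is why the cutoff $G=\max\{\|g\|_{L^{\infty}(\partial\Omega)},U\}$ is used; the asserted bound is independent of $g$ precisely because the right-hand side of the contraction inequality involves only $f-F\leq 0$.
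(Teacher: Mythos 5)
Your proposal is correct and takes essentially the same route as the paper: it uses the constant solution $\bar u\equiv U$ from Lemma~\ref{L-explicit}(i) with boundary datum $G\ge\max\{U,\|g\|_{L^{\infty}(\partial\Omega)}\}$ as a barrier and concludes via the contraction principle of Theorem~\ref{comp}. The extra remarks (well-definedness of $U$ from the strict monotonicity of $s\mapsto s-s^{m}N/R$ for $m<0$, and nonnegativity of $u$ from Definition~\ref{defi1}) are harmless elaborations of what the paper leaves implicit.
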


\begin{proof}
Let $u_U=U$ be the solution to \eqref{pde} with data $f=\|f\|_{L^{\infty}(\Omega)}$ and $g=G$ for all $G\ge U$, as given by Lemma \ref{L-explicit}(i). Then the conclusion follows choosing $G\ge \max\{U,\|g\|_{L^{\infty}({\partial\Omega})}\}$ and applying the comparison tool given by Theorem {\ref{comp}}.
\end{proof}

The (scaling-wise) sublinear case, $0<m<1$, lies somewhat in between, in the sense that solutions are {\it locally} bounded independently of the boundary value $g$.

\begin{proposition}\label{L-q01}
Let $0<m<1$, $R>0$, $\Omega=B_R(0)$, $f\in L^\infty(\Omega)$. Then
$$
u (x)\le \overline u (\|x\|),
$$
for any nonnegative $g\in L^\infty(\partial\Omega)$, where $u$ is the  solution of \eqref{pde} with data $f$, $g$ and
\begin{equation}\label{def-acca}
\overline u(x):=\overline h(r)\chi_{B_{\overline r}(0)}(\rho) +\overline h(\rho)\chi_{\Omega\setminus B_{\overline r}(0)}(\rho),\quad \rho:=||x||,
\end{equation}
where $\overline h = \overline v^\frac{1}{m-1}$, $\overline v$ is the unique solution to
\begin{equation}\label{def-v}
 {\dfrac{\d \overline v}{\d \rho}}=\frac{m-1}{m}\left(1-\|f\|_{L^{\infty}(\Omega)} \overline v^{1/(1-m)}-\frac{(N-1)\overline v}{\rho}\right), \quad \overline v(R)=0
\end{equation}
and $\overline r$ is the unique solution to $\overline h(r)-\|f\|_{L^{\infty}(\Omega)}=\overline h^m(r)N/r$.
\end{proposition}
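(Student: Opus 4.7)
The strategy is to bound $u$ pointwise by a family of \emph{explicit radial solutions} with increasing boundary data, provided by Lemma~\ref{L-explicit}(ii), and then to pass to the limit as the boundary data tend to infinity, identifying the limiting barrier with $\overline u$.

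First, for each integer $n$ larger than both $\|g\|_{L^\infty(\partial\Omega)}$ and the threshold required by Lemma~\ref{L-explicit}(ii) applied with $F=\|f\|_{L^\infty(\Omega)}$, I would denote by $u_n$ the radial solution of \eqref{pde} with data $(\|f\|_{L^\infty(\Omega)},n)$ provided by that lemma, so that
\[
u_n(x)=h_n(r_n)\chi_{B_{r_n}(0)}(\rho)+h_n(\rho)\chi_{B_R(0)\setminus B_{r_n}(0)}(\rho),
\]
where $h_n$ solves \eqref{B1101} with $h_n(R)=n$ and $r_n\in(0,R)$ is the unique zero of $H_N$. Since $f\leq\|f\|_{L^\infty(\Omega)}$ a.e.\ in $\Omega$ and $g\leq n$ $\mathcal H^{N-1}$-a.e.\ on $\partial\Omega$, the contraction principle of Theorem~\ref{UniqEllipticd} yields $u\le u_n$ a.e.\ in $\Omega$.

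Second, I would identify the limit of $u_n$ as $n\to\infty$ with $\overline u$. Setting $v_n:=h_n^{m-1}$ (which is well defined and positive since $h_n>0$) and using $h_n'=\tfrac{v_n'}{(m-1)h_n^{m-2}}=\tfrac{v_n'h_n^{2-m}}{m-1}$, a direct substitution converts \eqref{B1101} into
\[
\frac{dv_n}{d\rho}=\frac{m-1}{m}\left(1-\|f\|_{L^\infty(\Omega)}\,v_n^{1/(1-m)}-\frac{(N-1)v_n}{\rho}\right),\qquad v_n(R)=n^{m-1}.
\]
Since $0<m<1$, $n^{m-1}\to 0$ as $n\to\infty$, which matches the terminal condition in \eqref{def-v}. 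The right-hand side is continuous in $\rho\in(0,R]$ and locally Lipschitz in $v\in[0,\infty)$ (note that $1/(1-m)>1$ makes $v^{1/(1-m)}$ $C^1$ near $v=0$). Standard continuous-dependence estimates on the terminal datum then give $v_n\to \overline v$ uniformly on every interval $[\eta,R]$ with $\eta\in(0,R)$, and consequently $h_n\to \overline h$ uniformly on compact subsets of $(0,R)$.

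Third, I would show $r_n\to \overline r$. As established in the proof of Lemma~\ref{L-explicit}(ii), $H_N'>0$ on $\{H_N\ge 0\}$, so the zero $r_n$ is transversal and, together with the uniform convergence $h_n\to \overline h$ on compact sets away from $\rho=R$, a standard implicit-function/continuity argument gives $r_n\to \overline r$. Combining the three steps yields $u_n(x)\to \overline u(\|x\|)$ pointwise for $x\in B_R(0)$, and passing to the limit in $u\le u_n$ concludes the proof.

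The main obstacle is the passage to the limit of $v_n$ up to $\rho=R$, where $\overline v$ vanishes and the inverse transformation $h=v^{1/(m-1)}$ blows up: this is precisely what makes $\overline u$ a large solution. The issue is, however, mild, because $1/(1-m)>1$ makes the nonlinearity in the ODE for $v_n$ Lipschitz near $v=0$, so a Grönwall argument yields continuous dependence on the terminal value. One only needs pointwise convergence in the interior of $B_R(0)$, since the bound $u\le \overline u$ is vacuous at $\partial\Omega$ (where $\overline u=+\infty$). Existence and uniqueness of $\overline v$ on $(0,R]$ and of $\overline r\in(0,R)$ can be obtained in the very same way as in Lemma~\ref{L-explicit}(ii), by just tracking the ODE \eqref{def-v} backwards from $\rho=R$.
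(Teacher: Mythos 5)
Your proposal is correct and follows essentially the same route as the paper: comparison with the explicit radial solutions of Lemma \ref{L-explicit}(ii) for data $(\|f\|_{L^\infty(\Omega)},G)$ via Theorem \ref{UniqEllipticd}, the substitution $v=h^{m-1}$ turning \eqref{B1101} into \eqref{def-v} with terminal datum $G^{m-1}\to 0$, and continuous dependence in the ODE to get locally uniform convergence of $v_G$ (hence of $h_G$ and $r_G$) to $\overline v$, $\overline h$, $\overline r$. Your added remarks on the local Lipschitz character of $v\mapsto v^{1/(1-m)}$ near $v=0$ and the transversality $H_N'>0$ on $\{H_N\ge 0\}$ merely make explicit what the paper subsumes under ``standard ode theory.''
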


\begin{proof}
Let $F=\|f\|_{L^{\infty}(\Omega)}$ and $G\ge \|g\|_{L^{\infty}({\partial\Omega})}$ sufficiently large. We consider the solutions $u$ with data $F,G$  obtained in Lemma \ref{L-explicit}(ii) and index solutions accordingly, i.e. we let $u=u_G$, $r=r_G$, and $h=h_G$.
{
Letting $v_G=h_G^{m-1}$, we see that $v_G$ solves
$$
{\dfrac{\d v_G}{\d \rho}}=\frac{m-1}{m}\left(1-Fv_G^{1/(1-m)}-\frac{(N-1)v}{\rho}\right), \quad v_G(R)=G^{m-1}\stackrel{G\to +\infty}\to 0.
$$
Hence, by standard ode theory, $v_G\to \overline v$ locally uniformly in $(0,R]$ as $G\to +\infty$ (in fact, uniformly in $[0,R]$ if $N=1$) with $\overline v$ as in \eqref{def-v}, and that $r_G$ converges to $\overline r$. Finally, it follows from Theorem \ref{UniqEllipticd} that $u_G\ge u$ for all $G$ sufficiently large, hence the result.
}
\end{proof}

Observe that, in Proposition \ref{L-q01}, one has that $\overline v\sim (1-m)(R-\rho)/m$ as $\|x\|\to R$; therefore,
$$
u\lesssim \left(\frac{1-m}{m}(R-\|x\|)\right)^\frac{1}{m-1} \quad\mbox{as $\|x\|\to R$}.
$$ This asymptotic upper bound   is optimal, as shown by the following proposition.

\begin{proposition}
Let $0<m<1$, $R>0$, $\Omega=B_R(0)$, $G>0$, $ f\in L^\infty(\Omega)$, and $g=g_G\in L^\infty(\partial \Omega)$ such that $g_G\ge G$. Then the corresponding  solutions $u_G$ of \eqref{pde} are such that
$$
\liminf_{G\to \infty} u_G(x) \ge \overline u_0(\|x\|) \quad\mbox{for all $x\in B_R(0)$},
$$
 where $\overline u_0$ is defined as in \eqref{def-acca} with $F=0$ in \eqref{def-v}, and is such that $\overline u_0\sim \left(\frac{1-m}{m}(R-\|x\|)\right)^\frac{1}{m-1}$ as $\|x\|\to R$.
\end{proposition}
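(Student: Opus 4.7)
My plan is to bound $u_G$ from below by the explicit radial solutions of Lemma~\ref{L-explicit}(ii) with vanishing source and constant boundary value $G$, and then to let $G\to +\infty$ via continuous dependence for the underlying ODE. Concretely, let $u^*_G$ denote the solution of \eqref{pde} with data $(0,G)$ furnished by Lemma~\ref{L-explicit}(ii) (well defined for $G$ sufficiently large). Since $0\le f$ and $G\le g_G$ on $\partial\Omega$, Theorem~\ref{UniqEllipticd} applied with $(f,g)=(0,G)$ and $(\bar f,\bar g)=(f,g_G)$ gives $u^*_G\le u_G$ a.e.\ in $\Omega$. It therefore suffices to prove that $u^*_G(x)\to \overline u_0(\|x\|)$ pointwise on $B_R(0)$ as $G\to +\infty$.

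For this, I would describe $u^*_G$ through \eqref{guess1stex01}--\eqref{HN} (with $F=0$), in terms of its profile $h_G$ and its plateau radius $r_G$, and introduce $v_G:=h_G^{m-1}$. The same computation recalled inside the proof of Proposition~\ref{L-q01} shows that $v_G$ satisfies \eqref{def-v} with $F$ replaced by $0$, subject to the terminal condition $v_G(R)=G^{m-1}\to 0=\bar v(R)$ (which uses $m<1$). Standard continuous dependence on initial data then yields $v_G\to \bar v$ locally uniformly on $(0,R]$, so that $h_G=v_G^{1/(m-1)}\to \bar h$ locally uniformly on $(0,R)$. The defining equation $h_G(r_G)=h_G(r_G)^m N/r_G$ recasts as $v_G(r_G)=r_G/N$, and the uniqueness/monotonicity argument used to identify $r$ in Lemma~\ref{L-explicit}(ii) forces $r_G\to \bar r$. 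Combining these facts gives $u^*_G(x)\to \overline u_0(\|x\|)$ for every $x\in B_R(0)$, whence $\liminf_{G\to+\infty}u_G(x)\ge \overline u_0(\|x\|)$.

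The boundary asymptotic is then immediate from \eqref{def-v} with $F=0$ and $\bar v(R)=0$: evaluating the ODE at $\rho=R$ yields $\bar v'(R)=(m-1)/m$, hence $\bar v(\rho)=\tfrac{1-m}{m}(R-\rho)+o(R-\rho)$ as $\rho\to R^-$. Since $1/(m-1)<0$ and $\bar h=\bar v^{1/(m-1)}$, one obtains the announced behavior $\bar h(\rho)\sim\bigl(\tfrac{1-m}{m}(R-\rho)\bigr)^{1/(m-1)}$ as $\rho\to R^-$.

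The only mildly delicate point is the convergence $u^*_G(0)\to \overline u_0(0)=\bar h(\bar r)$, which reduces to $h_G(r_G)\to \bar h(\bar r)$. This follows from $r_G\to \bar r$ combined with uniform convergence $v_G\to \bar v$ on a neighborhood of $\bar r$, where $\bar v$ stays bounded below by a positive constant and so $v\mapsto v^{1/(m-1)}$ is smooth; all remaining steps are routine consequences of ODE theory and of the comparison principle already proved.
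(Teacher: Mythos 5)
Your argument is correct and follows essentially the same route as the paper: comparison (Theorem \ref{UniqEllipticd}) against the explicit radial solutions of Lemma \ref{L-explicit}(ii) with data $(0,G)$, followed by the substitution $v_G=h_G^{m-1}$, the observation $v_G(R)=G^{m-1}\to 0$, and continuous dependence for the ODE \eqref{def-v}, exactly as in the proof of Proposition \ref{L-q01}, which the paper invokes verbatim. Your additional remarks on the boundary asymptotics and on convergence at the plateau radius $r_G\to\overline r$ are consistent with, and slightly more detailed than, the paper's argument.
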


\begin{proof}
In view of the comparison tool given by Theorem \eqref{UniqEllipticd}, it suffices to prove the statement for the explicit solutions obtained in Lemma \ref{L-explicit}(ii) with $f=0$ and $g=G$. The proof is identical to the one of the previous Lemma.
\end{proof}

Finally, we give two explicit examples of the {\it regularizing effect} given by Lemma \ref{cont}: solutions do not jump in the bulk, even if $f$ does.

\begin{example}\label{ex.cont.1}
Let $R>0$, $\Omega=B_R(0)$, $0<r<R$, $f=\alpha\chi_{B_{r}(0)}+\beta\chi_{B_R(0)\setminus B_{r}(0)}>0$ ($0<\beta<\alpha)$, and $g=\beta$. Then the  solution of \eqref{pde} is $u=\beta$ for all $r$ sufficiently small.

\smallskip

Let again $\rho:=\|x\|$. We choose
$$
        \w(x):=\left\{\begin{array}
          {ll} C x \, & {\rm if \ } \rho\leq r \\ C \frac{r^N x}{\rho^N} & {\rm if \ } r<\rho<R,
        \end{array}\right.
$$
so that $\w$ (hence $\z$) is continuous across $\rho=r$ and
$$
\dive \z = \left\{\begin{array}
          {ll} \beta^m C N \, & {\rm if \ } \rho\leq r \\ 0 & {\rm if \ } r<\rho<R,
        \end{array}\right.
$$
hence $u-f=\dive \z$ holds choosing $C=\beta^{-m}(\beta-\alpha)/N$. Finally, imposing $\|\w\|_{\infty}\le 1$ we obtain
$$
\frac{\alpha-\beta}{N\beta^m} r\le 1 \quad\mbox{and}\quad \frac{\alpha-\beta}{N\beta^m} \frac{r^N}{R^{N-1}}  \le 1,
$$
which are satisfied for all $r$ sufficiently small.
\end{example}

Combining this construction with the one in the previous results --through Bernoulli-type equations-- one could in fact provide explicit solutions for any $r\in (0,R)$  and any constant boundary value. We give a prototypical example in the special case $m=1$, $N=1$, where the solution is still explicit.

\begin{example}
Let $N=1$, $m=1$, $\Omega=B_R(0)$, $0<r<R$, $f=\alpha\chi_{B_{r}(0)}+\beta\chi_{\Omega\setminus B_{r}(0)}>0$ ($\alpha>\beta>0)$, and $g=G>0$. Arguing as in Example \ref{ex.cont.1}, we see that $u=\beta$ is the solution to \eqref{pde} {if} $G\leq \beta$ {and} $\frac{\alpha-\beta}{\beta}r\leq 1$. Instead, if $\frac{\alpha-\beta}{\beta}r> 1$,  we look for solutions of the form
$$
u(x)=A\chi_{B_{r}(0)}+h(\rho)\chi_{\Omega\setminus B_{r}(0)}, \quad \rho:=\|x\|,
$$
for a suitable $A>0$. We choose
$$
        \w(x):=\left\{\begin{array}
          {ll}  -\frac{x}{r} \, & {\rm if \ } \rho\leq r \\ - \frac{ x}{\rho} & {\rm if \ } r<\rho<R\,.
        \end{array}\right.
$$
By imposing to $u$ to solve problem \eqref{pde} we obtain $A=\alpha(\frac{r}{r+1})$ and
$$
- h'(\rho)=h(\rho) - \beta \quad \mbox{for $r<\rho<R$.}
$$
Integrating and imposing $h(r)=A$, we obtain
$$
h(\rho)= \beta +\left(\frac{\alpha r}{r+1}-\beta\right)e^{r-\rho}.
$$
Observe that the boundary condition is satisfied in the sense of Definition \ref{defi1d}  as soon as {$G\le \beta$
}
since $[w,\nu]=-1$ at $\rho=R$.
\end{example}

\section{Homogeneous Neumann boundary conditions and more general nonlinearities}
\label{neumann}

Existence and uniqueness results analogous to Theorems \ref{exi} and \ref{exid} hold for {\eqref{pde}} with homogenous Neumann boundary conditions. The definition of solution is the following one:

\begin{definition}\label{def-neumann}
Let $f\in L^\infty(\Omega)$ be nonnegative with $\inf f>0$ if $m<0$. A function $u:\Omega\to [0,+\infty)$ is a solution of problem ({\ref{neumannpde}}) with datum $f$ if $u\in {TBV^{+}(\Omega)}\cap L^\infty(\Omega)$ {and} there {exists} $\w\in L^\infty(\Omega;\rn)$ {such that:} $\|\w\|_{\infty}\leq 1${, }$\z:=u^m\w\in X(\Omega)$ satisfies
{
\begin{equation}\label{identify-wdn-new}
|D \phi({T_a^b}(u))| \le (\z,D T_a^b(u)) \quad \mbox{as measures for {a.e.}  $0<a<b\leq +\infty$},
\end{equation}
}
\begin{equation}\label{sm9dn}
u-f=\dive \z \quad\mbox{in $\mathcal D'(\Omega)$}\,,
\end{equation}
and
\begin{equation}\label{boundconddn}  [\z,\nu] = 0
 \quad\mbox{$\mathcal H^{N-1}$-a.e. on $\partial\Omega$.}
\end{equation}
\end{definition}

\begin{theorem}\label{exineumann}
Let $f\in L^\infty(\Omega)$ be nonnegative with $\inf f>0$ if $m<0$. Then there exists a unique solution $u$ of {\eqref{neumannpde}} with datum $f$ in the sense of Definition \ref{def-neumann}. {In addition, $u\in DTBV^+(\Omega)$,
\begin{subequations}\label{em-totn}
\begin{equation}\label{em1n}
{(\w,DT_a^b(u))=|D T_a^b(u)|} \quad\mbox{for a.e. $0<a<b\leq +\infty$,}
\end{equation}
and
\begin{equation}\label{em2n}
(\z,D T_a^b(u))=\left|D\phi(T_a^b(u))\right| \quad\mbox{for a.e. $0<a<b\leq +\infty$.}
\end{equation}
\end{subequations}
}
\end{theorem}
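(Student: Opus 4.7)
The plan is to mirror the architecture of the Dirichlet proofs (Theorems \ref{exi} and \ref{exid}) while exploiting the fact that, under homogeneous Neumann conditions, all boundary terms in the integration-by-parts formulas vanish trivially.

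For existence, I would first set up the Neumann version of the approximating problem \eqref{resolvent-eps}, replacing the Dirichlet condition $u=g$ on $\partial\Omega$ by $\left((\eps+|u|)^m\frac{\nabla u}{|\nabla u|_\eps}+\eps\nabla u\right)\cdot \nu=0$ on $\partial\Omega$. Existence of $u_\eps\in H^1(\Omega)\cap L^\infty(\Omega)$ and the uniform bound $0\le u_\eps\le \|f\|_\infty$ follow by exactly the same monotonicity argument used in Lemma \ref{exi-eps}, now formulated in $H^1(\Omega)$ rather than $H^1_0(\Omega)$. In the singular case $m<0$, the assumption $\inf f>0$ makes the constant $\inf f$ a subsolution for the Neumann problem, yielding immediately $u_\eps\ge \inf f$ and bypassing the barrier construction of Lemma \ref{posit}. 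Testing with $u_\eps$ itself (rather than $u_\eps-\tilde g$) yields the $BV/TBV^+$ bound and the usual weak limits $u_\eps\rightharpoonup u$, $\tilde\z_\eps\stackrel{*}\rightharpoonup \z=u^m\w$, $\eps\nabla u_\eps\to 0$; passing to the limit in the equation gives \eqref{sm9dn}.

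To recover \eqref{identify-wdn-new}, I would test the approximating equation with $\varphi F(T(u_\eps))$ for $0\le \varphi\in C^\infty(\overline\Omega)$ and $T\in\mathcal T$, following verbatim the scheme used for \eqref{ineq-z-F} and \eqref{ineq-z-Fd}: because $[\z_\eps,\nu]=0$ in the classical sense, the boundary integral in the integration by parts vanishes and, after invoking \eqref{lgp1}, \eqref{def-phi-F}, and the lower semi-continuity of $|D\phi_F(\cdot)|$, one obtains \eqref{identify-wdn-new} directly, without the boundary obstacle-type contributions that appeared in \eqref{ineqboundary}/\eqref{ineqboundaryd}. The Neumann trace \eqref{boundconddn} is then recovered by testing the limit equation against an arbitrary $\varphi\in C^\infty(\overline\Omega)$, applying \eqref{Green}, and using \eqref{sm9dn}: this forces $\int_{\partial\Omega}\varphi[\z,\nu]\d\mathcal H^{N-1}=0$ for all such $\varphi$, whence \eqref{boundconddn}. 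The regularity statements $u\in DTBV^+(\Omega)$ and the measure identities \eqref{em-totn} are purely local, so Lemmas \ref{cont}, \ref{4.8}, and \ref{equalmeasured} apply without any modification.

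For uniqueness I would replicate the contraction arguments of Theorems \ref{comp} (if $m<0$) and \ref{UniqEllipticd} (if $m>0$), testing the difference of the two equations against $T_\eps(u-\bar u)_+$ or $T(u)T_\eps\!\left((T_a^\infty(u)-T_a^\infty(\bar u))_+\right)$ respectively, and exploiting the measure identifications \eqref{em-totn} together with Lemmas \ref{l-ms}, \ref{lematbvjump}, and \ref{lemaauxu=0}. The main simplification is that the boundary integral $\int_{\partial\Omega}T_\eps(\cdot)\bigl(T(u)[\z,\nu]-T(\bar u)[\bar\z,\nu]\bigr)\d\mathcal H^{N-1}$ now vanishes trivially because both $[\z,\nu]$ and $[\bar\z,\nu]$ are zero; the rest of the bulk computation carries through unchanged and yields $\int_\Omega(u-\bar u)^+\le \int_\Omega(f-\bar f)^+$. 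The most delicate step is, as in the Dirichlet degenerate case, the passage $b\to 0$ after $a\to 0$ culminating in the chain \eqref{dueinuno}, which forces $\bar f=0$ a.e. on $\{\bar u=0\}$; here too the corresponding boundary term vanishes automatically by \eqref{boundconddn}, so the argument is actually marginally lighter than its Dirichlet counterpart. Uniqueness is obtained by taking $f=\bar f$.
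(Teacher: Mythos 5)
Your overall architecture coincides with the paper's own (sketched) proof: the same Neumann approximating problems, the bounds $\inf f\le u_\eps\le\|f\|_{L^\infty(\Omega)}$ obtained from constant barriers (so that for $m<0$ the hypothesis $\inf f>0$ replaces Lemma \ref{posit}), the same energy estimate and weak limits yielding \eqref{sm9dn}, the same truncated test functions and lower semi-continuity for \eqref{identify-wdn-new}, and the observation that Lemmas \ref{cont}, \ref{4.8}, \ref{equalmeasured} are local and apply verbatim, while the contraction arguments of Theorems \ref{comp} and \ref{UniqEllipticd} go through with the boundary integrals now vanishing. Two points, however, deserve attention. First, a minor one: in the limit passage for \eqref{identify-wdn-new} it is cleaner to take $0\le\varphi\in\mathcal D(\Omega)$ and the boundary-free lower semi-continuity result of \cite[Theorem 3.1]{AdCF_esaim07} (as the paper does), because if you insist on $\varphi\in C^\infty(\overline\Omega)$ then identifying the limit of $-\int F(T(u_\eps))\z_\eps\cdot\nabla\varphi$ through \eqref{Green} reintroduces the boundary term $\int_{\partial\Omega}\varphi F(T(u))[\z,\nu]\d\mathcal H^{N-1}$, which at that stage of your argument is still unknown.

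Second, and more substantively: your derivation of \eqref{boundconddn} is, as literally written, circular. Equation \eqref{sm9dn} only says $\dive\z=u-f$ in $\Omega$ and carries no boundary information; testing it against $\varphi\in C^\infty(\overline\Omega)$ and applying \eqref{Green} merely expresses $\int_{\partial\Omega}\varphi[\z,\nu]\d\mathcal H^{N-1}$ as $\int_\Omega\varphi(u-f)\d x+\int_\Omega\z\cdot\nabla\varphi\d x$, with no reason for this to vanish. The zero normal trace must be inherited from the approximations: since $[\z_\eps,\nu]=0$, the approximate weak formulation holds for all $\varphi\in C^\infty(\overline\Omega)$ (not only compactly supported ones), i.e. $\int_\Omega(u_\eps-f)\varphi\d x=-\int_\Omega\z_\eps\cdot\nabla\varphi\d x$; passing to the limit in this identity using $\z_\eps\rightharpoonup\z$ in $L^2(\Omega;\R^N)$ and $u_\eps\to u$ in $L^1(\Omega)$, and only then comparing with \eqref{Green} and \eqref{sm9dn}, forces $\int_{\partial\Omega}\varphi[\z,\nu]\d\mathcal H^{N-1}=0$ for all such $\varphi$, hence \eqref{boundconddn}. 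This is precisely the paper's argument via \eqref{kl1}; the fix is one line, but without it the step fails. Note also that once \eqref{boundconddn} is established this way, your ordering issue in the previous paragraph disappears.
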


\begin{proof}[Sketch of the proof] The proof of Theorem \ref{exineumann} closely follows the lines of that of Theorems \ref{exi}, \ref{comp},  \ref{exid}, and \ref{UniqEllipticd}, with many simplifications due to the homogeneous Neumann boundary conditions. We only mention that one has to use the following approximating problems:
$$
\left\{
\begin{array}{ll}
u_\vare - f=\dive\left( (\vare + |u_\vare|)^m\frac{\nabla u_\vare}{|\nabla u_\vare|_\eps}+\eps \nabla u_\vare\right) & \mbox{in }\ \Omega
\\ \left((\vare + |u_\vare|)^m\frac{\nabla u_\vare}{|\nabla u_\vare|_\eps}+\eps \nabla u_\vare\right)\cdot \nu=0
 & \mbox{on }\ \partial\Omega,
\end{array}
\right.
$$
whose solutions $u_\eps$ satisfy
$$
\inf f \le u_{\vare} \leq \|f\|_{L^{\infty}(\Omega)}.
$$
The estimates and the passage to the limit in $\Omega$ are completely analogous, in fact simpler, due to the absence of boundary terms: for instance, in the proof of Lemma \ref{l-res-weak} one has to use lower semi-continuity of the functional
$$
u\in L^1(\Omega)\mapsto \left\{\begin{array}{cc}\displaystyle \int_\Omega \varphi d |D\phi_F(u)| & {\rm if \ } u\in BV(\Omega) \\ +\infty & {\rm otherwise} \end{array}\right., {\rm \ with \ }0\leq \varphi\in \mathcal D(\Omega)\,,
$$
(see \cite[Theorem 3.1]{AdCF_esaim07}) which does not contain any boundary contribution. The boundary condition \eqref{boundconddn} can be shown to hold as follows. The fluxes
$$
\z_\eps:= (\vare + |u_\vare|)^m\frac{\nabla u_\vare}{|\nabla u_\vare|_\eps}+\eps \nabla u_\vare
$$
satisfy (in view of \eqref{Green} and since $[\z_\eps,\nu]=0$ on $\partial\Omega$)
\begin{equation}\label{kl1}
0= \int_\Omega \varphi \dive \z_\eps -\int_\Omega \z_\eps\cdot \nabla \varphi \quad\mbox{for all $\varphi\in C^\infty(\overline\Omega)$}
\end{equation}
and are such that $\z_\eps\rightharpoonup \z$ in $L^2(\Omega;\R^N)$ and $\dive\z_\eps \stackrel{*}\rightharpoonup \dive\z$ in $\mathcal M(\Omega)$. Hence, passing to the limit as $\eps\to 0$ in \eqref{kl1} we obtain
$$
0= \int_\Omega \varphi \dive \z -\int_\Omega \z\cdot \nabla \varphi \stackrel{\eqref{Green}}= \int_{\partial\Omega} \varphi [\z,\nu] \d\mathcal H^{N-1},
$$
for all $\varphi\in C^\infty(\overline\Omega)$, implying that $[\z,\nu]=0$ on $\partial\Omega$.
\end{proof}

\begin{remark}\label{rem-cont}
The arguments in Lemmas \ref{cont} and \ref{4.8}, leading to a null singular set, apply also to the resolvent equation of other parabolic equations with linear growth lagrangian, equations, such that of the relativistic heat equation ($m=1$) and the relativistic porous medium equation ($m>1$, cf. \eqref{pmrhe}),
  \begin{equation*}
  u-f=\dive\left(\frac{|u|^m \nabla u}{\sqrt{u^2+|\nabla u|^2}}\right)\quad {\rm for \ } m{\ge} 1,
  \end{equation*}
  or that of the speed-limited porous medium equation (cf. \eqref{flpme}),
  \begin{equation*}
  u-f=\dive\left(\frac{|u|\nabla  u^{M-1}}{\sqrt{1+|\nabla u^{M-1}|^2}}\right)\quad {\rm for \ } M>{1},
  \end{equation*}
  studied in \cite{ACM_na05,ACMM_ma10,Caselles_jde11,CC_na13} under different types of boundary conditions (compare condition \eqref{identify-zd} with (3.26) in \cite{ACM_na05},  (34)  in \cite{ACMM_ma10}, {(50)} in \cite{Caselles_jde11}, and {condition 3 of Definition 8.3} in \cite{CC_na13}). Therefore, the unique solutions of those problems belong as well to $DTBV^+(\Omega)$. {Note, however, that the proof of Lemmas \ref{cont} and \ref{4.8} does {\em not} carry over to $m=0$, where indeed solutions may have jumps.}
\end{remark}

\begin{remark}
Throughout the paper, we have focused on  the case of a  mobility given by the nonlinear term $u^{m}$. However, the proofs of both existence and uniqueness of solutions for both problem \eqref{pde} and problem \eqref{neumannpde} still hold in the case of a more general nonlinearity:
$$
\left\{
\begin{array}{ll}
u-f=\dive\left(\phi'(u)\frac{\nabla u}{|\nabla u|}\right) & \mbox{in }\ \Omega\\
u=g & \mbox{on}\  \partial\Omega\,
\end{array}
\right.
$$
(we use $\phi'$ for consistency with \eqref{def-phi-I}),
where either
\begin{itemize}
\item[(S)] $\phi'(s)$ is a locally continuous strictly decreasing function on $(0,+\infty)$
\end{itemize}
or
\begin{itemize}
\item[(D)] $\phi'(s)\in {C}([0,+\infty))$ is a strictly
{increasing
}
function.
 \end{itemize}
Of course, (S) and (D) represent the singular case ($m<0$) and the degenerate case ($m>0$) of the previous sections, respectively. The respective assumptions on $f$ and $g$ are identical (for instance, in case (S) one asks that $g$ be strictly positive on $\partial\Omega$). Definitions \ref{defi1}, respectively \ref{defi1d}, can be modified accordingly, by formally substituting $u^{m}$ with $\phi'(u)$.
\end{remark}

\bigskip
\footnotesize
\noindent\textit{Acknowledgments.}
 The second and third author acknowledge partial support by the Spanish MEC and FEDER project MTM2015-70227-P. The third author has been partially supported by the Gruppo Nazionale per l'Analisi Matematica, la Probabilit\`a e le loro Applicazioni (GNAMPA) of the Istituto Nazionale di Alta Matematica (INdAM).

\end{document}